\providecommand{\U}[1]{\protect\rule{.1in}{.1in}}
\newtheorem{theorem}{Theorem}
\newtheorem{corollary}[theorem]{Corollary}
\theoremstyle{definition}%% HH
\newtheorem{definition}[theorem]{Definition}}
\newtheorem{lemma}[theorem]{Lemma}
\theoremstyle{definition}%% HH
\newtheorem{remark}[theorem]{Remark}}
\begin{document}
\sloppy %% HH
\title{Tetrads of lines spanning $\operatorname*{PG}(7,2)$}
\author{Ron Shaw, Neil Gordon, Hans Havlicek}
\date{}
\maketitle

\begin{abstract}
Our starting point is a very simple one, namely that of a set $\mathcal{L}%
_{4}$ of four mutually skew lines in $\operatorname*{PG}(7,2).$ Under the
natural action of the stabilizer group $\mathcal{G}(\mathcal{L}_{4}%
)<\operatorname*{GL}(8,2)$ the $255$ points of $\operatorname*{PG}(7,2)$ fall
into four orbits $\omega_{1},\omega_{2},\omega_{3},\omega_{4},$ of respective
lengths $12,54,108,81.$ We show that the $135$ points $\in\omega_{2}\cup
\omega_{4}$ are the internal points of a hyperbolic quadric $\mathcal{H7}_{7}$
determined by $\mathcal{L}_{4},$ and that the $81$-set $\omega_{4}$ (which is
shown to have a sextic equation) is an orbit of a normal subgroup $\mathcal{G}%
_{81}\cong(Z_{3})^{4}$ of $\mathcal{G}(\mathcal{L}_{4}).$ There are $40$
subgroups $\cong(Z_{3})^{3}$ of $\mathcal{G}_{81},$ and each such subgroup
$H<\mathcal{G}_{81}$ gives rise to a decomposition of $\omega_{4}$ into a
triplet $\{\mathcal{R}_{H},\mathcal{R}_{H}^{\prime},\mathcal{R}_{H}%
^{\prime\prime}\}$ of $27$-sets. We show in particular that the constituents
of precisely $8$ of these $40$ triplets are Segre varieties $\mathcal{S}%
_{3}(2)$ in $\operatorname*{PG}(7,2).$ This ties in with the recent finding
%%--- in R.Shaw, N.A.Gordon, H.Havlicek, Aspects of the Segre variety\emph{
%%}$\mathcal{S}_{1,1,1}(2),$ \emph{Des. Codes Cryptogr.,} \textbf{62} (2012),
%%225-239 ---
that each $\mathcal{S}=\mathcal{S}_{3}(2)$ in $\operatorname*{PG}%
(7,2)$ determines a distinguished $Z_{3}$ subgroup of $\operatorname*{GL}%
(8,2)$ which generates two sibling copies $\mathcal{S}^{\prime},\mathcal{S}%
^{\prime\prime}$ of $\mathcal{S}.$

\end{abstract}

\medskip

\noindent\emph{MSC2010: 51E20, 05B25, 15A69}

\noindent\emph{Key words: Segre variety }$\mathcal{S}_{3}(2);$
\emph{line-spread; invariant polynomials}

\section{Introduction}

We work for most of the time over $\mathbb{F}_{2}=\operatorname*{GF}(2),$ and
so we can then identify a projective point $\langle
x\rangle\in\operatorname*{PG}(n-1,2)$ with the nonzero vector $x\in V(n,2)$. In
fact we will be dealing with vector space dimension $n=8,$ and we will start
out from a(ny) direct sum decomposition
\begin{equation}
V_{8}=V_{a}\oplus V_{b}\oplus V_{c}\oplus V_{d} \label{Direct sum 2+2+2+2}%
\end{equation}
of $V_{8}:=V(8,2)$ into 2-dimensional spaces $V_{a},V_{b},V_{c},V_{d}.\ $For
$h\in\{a,b,c,d\}$ we will write%
\begin{equation*}
V_{h}=\{u_{h}(\emptyset),u_{h}(0),u_{h}(1),u_{h}(2)\},\quad\text{with }%
u_{h}(\emptyset)=0. \label{vectors of V_h}%
\end{equation*}
(The reason for this labelling of the four element of $V_{h}$ is that in a
later section we wish to use $0,1,2$ as the elements of the Galois field
$\mathbb{F}_{3}.)$ So $\mathbb{P}V_{8}=\operatorname*{PG}(7,2)$ is the span of
the four projective lines
\begin{equation}
L_{h}:=\mathbb{P}V_{h}=\{u_{h}(0),u_{h}(1),u_{h}(2)\},~h\in\{a,b,c,d\}.
\label{points of L_h}%
\end{equation}
It is surprising, but gratifying, that from such a simple starting point so
many interesting and intricate geometrical aspects quickly emerge, as we now describe.

\subsection{$\mathcal{H}_{7}$-tetrads of lines in $\operatorname*{PG}(7,2)$
\label{SSec H7 tetrads of lines}}

For $v_{h}\in V_{h}$ let $(v_{a},v_{b},v_{c},v_{d}):=v_{a}\oplus v_{b}\oplus
v_{c}\oplus v_{d}$ denote a general element of $V_{8}.$ Setting $U_{ijkl}%
:=(u_{a}(i),u_{b}(j),u_{c}(k),u_{d}(l))$ then the $255$ points of
$\operatorname*{PG}(7,2)$ are
\begin{equation}
\{U_{ijkl}|~i,j,k,l\in\{\emptyset,0,1,2\},~ijkl\neq\emptyset\emptyset
\emptyset\emptyset\}. \label{255 points}%
\end{equation}
First observe that the subgroup $\mathcal{G}(\mathcal{L}_{4})$ of
$\operatorname*{GL}(8,2)$ which preserves the direct sum decomposition
(\ref{Direct sum 2+2+2+2}), and hence the foregoing tetrad
\begin{equation}
\mathcal{L}_{4}:=\{L_{a},L_{b},L_{c},L_{d}\} \label{tetrad L}%
\end{equation}
of lines, has the semi-direct product structure%
\begin{equation*}
\mathcal{G}(\mathcal{L}_{4})=\mathcal{N}\rtimes\operatorname*{Sym}%
(4),\quad\text{where }\mathcal{N}:=\operatorname*{GL}(V_{a})\times
\operatorname*{GL}(V_{b})\times\operatorname*{GL}(V_{c})\times
\operatorname*{GL}(V_{d}), \label{G(L)}%
\end{equation*}
and where $\operatorname*{Sym}(4)=\operatorname*{Sym}(\{a,b,c,d\}).$ Hence
$|\mathcal{G}(\mathcal{L}_{4})|=6^{4}\times24=31,104.$

The $\mathcal{G}(\mathcal{L}_{4}\mathcal{)}$-orbits of points are easily
determined. In addition to the weight $\operatorname{wt}(p)=\operatorname{wt}%
_{\mathcal{B}}(p)$ of a point $p\in\operatorname*{PG}(7,2)$ with respect to a
basis $\mathcal{B}$ for $V_{8},$ let us also define its \emph{line-weight}
$\operatorname{lw}(p)$ as follows:
\begin{equation*}
\operatorname{lw}(U_{ijkl})=r\quad\text{whenever precisely }r\text{ of
}i,j,k,l\text{ are in }\{0,1,2\}. \label{line weight}%
\end{equation*}
Then the $255$ points of $\operatorname*{PG}(7,2)$ clearly fall into just four
$\mathcal{G}(\mathcal{L}_{4}\mathcal{)}$-orbits $\omega_{1}$, $\omega_{2}$,
$\omega_{3}$, $\omega_{4}$, where%
\begin{equation}
\omega_{r}=\{p\in\operatorname*{PG}(7,2)|~\operatorname{lw}(p)=r\}.
\label{4 orbits omega 1,2,3,4}%
\end{equation}
The lengths of these orbits are accordingly%
\begin{equation*}
|\omega_{1}|=12,\quad|\omega_{2}|=\tbinom{4}{2}\times3^{2}=54,\quad|\omega
_{3}|=\tbinom{4}{3}\times3^{3}=108,\quad|\omega_{4}|=3^{4}=81.
\label{lengths of 4 orbits}%
\end{equation*}

\medskip

Next take note that \emph{there is a unique} $\operatorname{Sp}(8,2)$%
\emph{-geometry on} $V_{8}:=V(8,2),$ given by a non-degenerate alternating
bilinear form $B,$ such that the subspaces $V_{a},V_{b},V_{c},V_{d}$ are
hyperbolic 2-dimensional spaces which are pairwise orthogonal. If
$\mathcal{B}=\{e_{i}\}_{i\in\{1,2,3,4,5,6,7,8\}}$ is any basis such that
\begin{equation}
V_{8}=V_{a}\perp V_{b}\perp V_{c}\perp V_{d}={\prec} e_{1},e_{8}{\succ}\perp{\prec}
e_{2},e_{7}{\succ}\perp{\prec} e_{3},e_{6}{\succ}\perp{\prec} e_{4},e_{5}%
{\succ}\label{V_8 = <1,8> + ...}%
\end{equation}
then the symplectic product $x\cdot y:=B(x,y)$ is determined by its values on
basis
vectors:%
\begin{align}
e_{1}\cdot e_{8}  &  =e_{2}\cdot e_{7}=e_{3}\cdot e_{6}=e_{4}\cdot e_{5}=1,\nonumber\\
e_{i}\cdot e_{j}  &  =0\quad\text{for other values of }i,j, \label{e_i.e_j}%
\end{align}
and so has the coordinate expression
\begin{equation*}
x\cdot y=(x_{1}y_{8}+x_{8}y_{1})+(x_{2}y_{7}+x_{7}y_{2})+(x_{3}y_{6}+x_{6}%
y_{3})+(x_{4}y_{5}+x_{5}y_{4}). \label{x.y}%
\end{equation*}

Perhaps less obvious is the fact that \emph{the tetrad (\ref{tetrad L}) also
determines a particular non-degenerate quadric} $\mathcal{Q}$\emph{ in
}$\operatorname*{PG}(7,2).$ For, as we now show, such a quadric $\mathcal{Q}$
is uniquely determined by the two conditions
\begin{itemize}
\item[(i)] it has equation $Q(x)=0$ such that the quadratic form $Q$
    polarizes to give the foregoing symplectic form $B:$
    $Q(x+y)+Q(x)+Q(y)=x\cdot y$;

\item[(ii)] the $12$-set of points
\begin{equation*}
\mathcal{P}(\mathcal{L}_{4}):=\omega_{1}=L_{a}\cup L_{b}\cup L_{c}\cup
L_{d}\subset\operatorname*{PG}(7,2) \label{P(L)}%
\end{equation*}
supporting the tetrad $\mathcal{L}_{4}$\emph{ }is external to $\mathcal{Q}.$
\end{itemize}

For it follows from (i) that the terms of degree 2 in $Q$ must be
$P_{2}(x)=x_{1}x_{8}+x_{2}x_{7}+x_{3}x_{6}+x_{4}x_{5},$ and then the eight
conditions $Q(e_{i})=1$ entail that the linear terms in $Q$ must be $P_{1}(x)=%
%TCIMACRO{\tsum \nolimits_{i=1}^{8}}%
%BeginExpansion
{\textstyle\sum\nolimits_{i=1}^{8}}
%EndExpansion
x_{i},$ so that
\begin{equation}
Q(x)=P_{2}(x)+P_{1}(x)=x_{1}x_{8}+x_{2}x_{7}+x_{3}x_{6}+x_{4}x_{5}+u\cdot x,
\label{Q = P2 + P1}%
\end{equation}
where $u:=\sum\nolimits_{i=1}^{8}e_{i}.$ Further $Q$ in (\ref{Q = P2 + P1}) is
seen to satisfy also the four conditions $Q(e_{i}+e_{j})=1,~ij\in
\{18,27,36,45\},$ so indeed $Q(p)=1$ for all $p\in\omega_{1}.$

\begin{theorem}
\label{Thm H_7 = 54 + 81}The quadric $\mathcal{Q}$ is a hyperbolic quadric
$\mathcal{H}_{7};$ moreover $\mathcal{H}_{7}=\omega_{2}\cup\omega_{4}.$
\end{theorem}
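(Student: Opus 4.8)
The plan is to prove the two assertions in the reverse of the order stated: first pin down the point set $\mathcal{Q}$ exactly, showing $\mathcal{Q}=\omega_{2}\cup\omega_{4}$, and only then read off the hyperbolic type. The engine for the first step is a simple additivity property of $Q$ with respect to the decomposition (\ref{Direct sum 2+2+2+2}).

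For the decomposition, write a general point as $p=U_{ijkl}=v_{a}+v_{b}+v_{c}+v_{d}$ with $v_{h}\in V_{h}$, and expand $Q(p)$ by iterating the polarization identity $Q(x+y)=Q(x)+Q(y)+x\cdot y$ from condition (i). This yields $Q(p)=\sum_{h}Q(v_{h})+\sum_{h<h'}v_{h}\cdot v_{h'}$. Every cross term $v_{h}\cdot v_{h'}$ vanishes, since the four spaces $V_{a},V_{b},V_{c},V_{d}$ are pairwise $B$-orthogonal in the $\operatorname{Sp}(8,2)$-geometry that the tetrad determines; hence $Q(p)=Q(v_{a})+Q(v_{b})+Q(v_{c})+Q(v_{d})$. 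We already know, from the verification that $\omega_{1}=L_{a}\cup L_{b}\cup L_{c}\cup L_{d}$ is external to $\mathcal{Q}$, that $Q$ takes the value $1$ at every nonzero vector of each $V_{h}$; and of course $Q(0)=0$. Therefore $Q(U_{ijkl})$ equals the number of indices among $i,j,k,l$ lying in $\{0,1,2\}$, reduced modulo $2$; that is, $Q(U_{ijkl})=\operatorname{lw}(U_{ijkl})\bmod 2$. Since every point of $\operatorname{PG}(7,2)$ has line-weight $1,2,3$ or $4$, we conclude $\mathcal{Q}=\{p\in\operatorname{PG}(7,2):\operatorname{lw}(p)\text{ even}\}=\omega_{2}\cup\omega_{4}$.

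For the type, note that $Q$ polarizes to the non-degenerate form $B$, so $\mathcal{Q}$ is a non-degenerate quadric in $\operatorname{PG}(7,2)$, hence either the hyperbolic $\mathcal{H}_{7}$, with $(2^{3}+1)(2^{4}-1)=135$ points, or the elliptic $\mathcal{E}_{7}$, with $(2^{3}-1)(2^{4}+1)=119$ points. But $|\mathcal{Q}|=|\omega_{2}|+|\omega_{4}|=54+81=135$, so $\mathcal{Q}=\mathcal{H}_{7}$. (Alternatively one can avoid the count: the linear part of (\ref{Q = P2 + P1}) is $x\mapsto u\cdot x=B(u,x)$, so the substitution $x\mapsto x+u$ turns $Q$ into $P_{2}(x)+P_{2}(u)$, and $P_{2}(u)=4\equiv 0$, so $\mathcal{Q}$ is projectively equivalent to the standard hyperbolic quadric $P_{2}(x)=0$.)

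I do not expect a genuine obstacle; the argument is essentially a short computation. The one fact that really must be \emph{used}, rather than merely assumed, is the pairwise $B$-orthogonality of $V_{a},V_{b},V_{c},V_{d}$ — exactly the feature of the symplectic geometry singled out by the tetrad — which is what kills all the cross terms. The only step requiring a little care is the hyperbolic-versus-elliptic dichotomy; the point count is the most robust way to settle it, though exhibiting an explicit solid $\operatorname{PG}(3,2)$ lying on $\mathcal{Q}$ (a generator, which is impossible on $\mathcal{E}_{7}$) would do equally well.
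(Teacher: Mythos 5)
Your proof is correct, but it identifies the point set of $\mathcal{Q}$ by a different mechanism than the paper. The paper never evaluates $Q$ on the orbits at all: since $\mathcal{Q}$ is uniquely determined by conditions (i) and (ii), it is $\mathcal{G}(\mathcal{L}_{4})$-invariant, so its point set must be a union of the orbits $\omega_{2},\omega_{3},\omega_{4}$ (of sizes $54,108,81$), and the only such union of size $119$ or $135$ is $\omega_{2}\cup\omega_{4}$; this settles both claims in one stroke. You instead compute $Q$ directly, using iterated polarization plus the pairwise $B$-orthogonality of $V_{a},V_{b},V_{c},V_{d}$ to obtain $Q(U_{ijkl})=\operatorname{lw}(U_{ijkl})\bmod 2$, and invoke the $119/135$ dichotomy only to fix the type. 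Your route is slightly longer but buys more: it establishes the full value pattern $(1,0,1,0)$ of $Q=Q_{2}$ on $\omega_{1},\dots,\omega_{4}$, which is precisely the first row of the table used later to prove Theorem~\ref{Thm sextic for omega_4}, whereas the paper's counting argument only locates the zero set. One caveat on your parenthetical alternative: the substitution $x\mapsto x+u$ is a translation of $V_{8}$, not a linear map, so it does not by itself exhibit a \emph{projective} equivalence of $\mathcal{Q}$ with $P_{2}(x)=0$; what it does give is a bijection between the two zero sets (equivalently, $\operatorname{Arf}(Q)=\operatorname{Arf}(P_{2})+P_{2}(u)=0$), which again reduces you to the point count or to the classification of quadratic forms by their Arf invariant. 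The main argument does not depend on this aside.
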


\begin{proof}
There exist just two kinds, $\mathcal{E}_{7}$ and $\mathcal{H}_{7},$ of
non-degenerate quadrics in $\operatorname*{PG}(7,2).$ An elliptic quadric
$\mathcal{E}_{7}$ has $119$ points and a hyperbolic quadric $\mathcal{H}_{7}$
has $135$ points; see \cite[Theorem 5.21]{HirschfieldVol1}, \cite[Section
2.2]{ShawCliffAlg}. Since $\mathcal{Q}$ is uniquely determined, its internal
points must be a union of the $\mathcal{G}(\mathcal{L}_{4}\mathcal{)}$-orbits
$\omega_{2},\omega_{3},\omega_{4},$ of respective lengths $54,108,81.$ So the
only possibility is that $\mathcal{Q}$ is a hyperbolic quadric $\mathcal{H}%
_{7}=\omega_{2}\cup\omega_{4},$ having $54+81=135$ points. (So we will term
such a tetrad $\mathcal{L}_{4}$ of lines in $\operatorname*{PG}(7,2)$ a
$\mathcal{H}_{7}$\emph{-tetrad.)}
\end{proof}

\begin{corollary}
\label{Cor G(L) < O+(8,2)}$\mathcal{G}(\mathcal{L}_{4})$ is a subgroup of the
isometry group $\mathcal{G}(Q)\cong\operatorname{O}^{+}(8,2)<\operatorname{Sp}%
(8,2)$ of the hyperbolic quadric $\mathcal{H}_{7}.$
\end{corollary}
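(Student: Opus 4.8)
The plan is to exploit the two uniqueness facts already established in the text: that $B$ is the unique non-degenerate alternating form for which $V_a,V_b,V_c,V_d$ are pairwise orthogonal hyperbolic planes, and that $Q$ is the unique quadratic form satisfying conditions (i) and (ii). Since every $g\in\mathcal{G}(\mathcal{L}_4)$ by definition maps the decomposition (\ref{Direct sum 2+2+2+2}) to itself, merely permuting the four summands, anything canonically attached to the \emph{unordered} tetrad is automatically $g$-invariant; so both $B$ and $Q$ are preserved and we are done.

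First I would show $\mathcal{G}(\mathcal{L}_4)<\operatorname{Sp}(8,2)$. Fix $g\in\mathcal{G}(\mathcal{L}_4)$ and let $\sigma\in\operatorname{Sym}(\{a,b,c,d\})$ be the induced permutation, so $gV_h=V_{\sigma(h)}$. Define the transported form $B^g(x,y):=B(g^{-1}x,g^{-1}y)$; it is again non-degenerate and alternating. Because $g^{-1}$ carries each $V_h$ linearly and bijectively onto $V_{\sigma^{-1}(h)}$, every $V_h$ is a $B^g$-hyperbolic plane and any two distinct $V_h,V_{h'}$ are $B^g$-orthogonal. By the uniqueness of the $\operatorname{Sp}(8,2)$-geometry determined by the tetrad, $B^g=B$; that is, $g$ is an isometry of $B$.

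Next I would upgrade this to $\mathcal{G}(\mathcal{L}_4)<\mathcal{G}(Q)$. With $g$ as above put $Q^g(x):=Q(g^{-1}x)$. From $B^g=B$ we get $Q^g(x+y)+Q^g(x)+Q^g(y)=B^g(x,y)=x\cdot y$, so $Q^g$ satisfies condition (i). Condition (ii) follows because $g$ permutes the lines $L_a,L_b,L_c,L_d$, hence fixes the supporting set $\mathcal{P}(\mathcal{L}_4)=\omega_1$ setwise: for $p\in\omega_1$ we have $g^{-1}p\in\omega_1$, and since $Q\equiv1$ on $\omega_1$ (as computed just before Theorem \ref{Thm H_7 = 54 + 81}), $Q^g(p)=Q(g^{-1}p)=1$, so $\omega_1$ is external to the quadric $Q^g=0$ as well. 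By the uniqueness of $Q$, $Q^g=Q$, i.e. $Q(g^{-1}x)=Q(x)$ for all $x$, so $g\in\mathcal{G}(Q)$. Finally, by Theorem \ref{Thm H_7 = 54 + 81} the quadric is hyperbolic, whence $\mathcal{G}(Q)\cong\operatorname{O}^{+}(8,2)$, and the standard inclusion of the isometry group of a quadratic form into that of its polar form gives $\operatorname{O}^{+}(8,2)<\operatorname{Sp}(8,2)$.

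I do not expect a genuine obstacle here; this is a soft canonicity argument. The only point that needs care is the bookkeeping of the permutation $\sigma$ of the summands, together with the observation that the defining conditions for both $B$ and $Q$ are phrased purely in terms of the unordered tetrad (pairwise orthogonality of the $V_h$, and externality of $\omega_1$), so that they survive transport by $g$; once that is checked, the two uniqueness statements close the argument immediately.
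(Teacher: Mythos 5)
Your proposal is correct and is essentially the argument the paper intends: the corollary is stated without a separate proof precisely because it follows from the uniqueness of $B$ and of $Q$ subject to conditions (i) and (ii), which are phrased in terms of the unordered tetrad and hence are preserved under transport by any $g\in\mathcal{G}(\mathcal{L}_{4})$ (indeed, the proof of Theorem \ref{Thm H_7 = 54 + 81} already invokes this invariance when it asserts that the internal points of $\mathcal{Q}$ form a union of $\mathcal{G}(\mathcal{L}_{4})$-orbits). Your write-up just makes the canonicity argument explicit.
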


\begin{remark}
In fact $\mathcal{G}(\mathcal{L}_{4})$ is a maximal subgroup of
$\operatorname{O}^{+}(8,2)=\operatorname{O}_{8}^{+}(2)\cdot 2$; see \cite[p.
85]{Atlas}, where it is recorded as $S_{3}\operatorname*{wr}S_{4}.$
\end{remark}

\subsection{$\mathcal{G}(\mathcal{L}_{4})$-invariant polynomials
\label{SSec Invt Polys}}

The tetrad $\mathcal{L}_{4}=\{L_{a},L_{b},L_{c},L_{d}\}$ determines the
following $\mathcal{G}(\mathcal{L}_{4})$-invariant sets of flats in
$\operatorname*{PG}(7,2):$
\begin{align*}%% HH *
\text{(i) four }5\text{-flats} &  \text{:\quad}\langle L_{a},L_{b}%
,L_{c}\rangle,~\langle L_{a},L_{b},L_{d}\rangle,~\langle L_{a},L_{c}%
,L_{d}\rangle,~\langle L_{b},L_{c},L_{d}\rangle;\nonumber\\
\text{(ii) six }3\text{-flats} &  \text{:\quad}\langle L_{a},L_{b}%
\rangle,~\langle L_{a},L_{c}\rangle,~\langle L_{a},L_{d}\rangle,~\langle
L_{b},L_{c}\rangle,~\langle L_{b},L_{d}\rangle,~\langle L_{c},L_{d}%
\rangle;\nonumber\\
\text{(iii) four }1\text{-flats} &  \text{:\quad}L_{a},L_{b},L_{c}%
,L_{d}.%\label{InvtSets()i(ii)(iii)}%
\end{align*}
Let (i) $F_{hkl}=0$ be the quadratic equation of the $5$-flat $\langle
L_{h},L_{k},L_{l}\rangle$, (ii) $F_{hk}=0$ be the quartic equation of the
$3$-flat $\langle L_{h},L_{k}\rangle$ and (iii) $F_{h}=0$ be the sextic
equation of the line $L_{h}.$ (See \cite[Lemma 2]{AspectsSegreinDCC}.)
Consequently the tetrad $\mathcal{L}_{4}$ determines the $\mathcal{G}%
(\mathcal{L}_{4})$-invariant polynomials $Q_{2},Q_{4},Q_{6},$ of respective
degrees $2,4,6,$ defined as follows:%
\begin{align*}% HH
\text{(i)~ }Q_{2} &  =F_{abc}+F_{abd}+F_{acd}+F_{bcd},\nonumber\\
\text{(ii)~ }Q_{4} &  =F_{ab}+F_{ac}+F_{ad}+F_{bc}+F_{bd}+F_{cd},\nonumber\\
\text{(iii)~ }Q_{6} &  =F_{a}+F_{b}+F_{c}+F_{d}.\label{Q_2,4,6}%
\end{align*}

\begin{theorem}
\label{Thm sextic for omega_4}The $81$-set $\omega_{4}$ has the sextic
equation $Q_{\omega_{4}}(x)=0,$ where $Q_{\omega_{4}}:=Q_{6}+Q_{4}+Q_{2}.$
\end{theorem}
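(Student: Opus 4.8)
The plan is to evaluate the three invariant polynomials $Q_2,Q_4,Q_6$ (and hence their sum) on a point $U_{ijkl}$ and show that $Q_6+Q_4+Q_2$ takes the value $0$ precisely when $\operatorname{lw}(U_{ijkl})=4$, i.e.\ precisely on $\omega_4$. The key observation is that each constituent polynomial has a product (``multiplicative'') structure dictated by the geometry: the $5$-flat $\langle L_h,L_k,L_l\rangle$ is defined by a single linear form $\ell_m$ (the coordinate form ``dual'' to the missing line $L_m$), so $F_{hkl}$ is essentially $\ell_m$ raised to a power, or more precisely the unique quadric vanishing on that $5$-flat; similarly $F_{hk}$ vanishes on the $3$-flat $\langle L_h,L_k\rangle$ and $F_h$ vanishes on the line $L_h$. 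First I would fix, from the direct sum \eqref{Direct sum 2+2+2+2}, explicit coordinates: assign to $V_a,V_b,V_c,V_d$ coordinate pairs so that $L_h=\{u_h(0),u_h(1),u_h(2)\}$ and compute, for the standard ``conic'' on a $\operatorname{PG}(1,2)$-pair, the quadratic form $q_h(v_h)$ on $V_h$ that vanishes exactly at $v_h=0$ (over $\mathbb{F}_2$ a nondegenerate binary quadratic form vanishes only at $0$). Then $\langle L_h,L_k,L_l\rangle=\{x: v_m(x)=0\}$ where $v_m$ is the $V_m$-component, and one checks that the quadric $F_{hkl}$ may be taken to be $q_m(v_m)$.

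Next I would assemble the pieces. Writing $\chi_h:=\chi_h(x)\in\{0,1\}$ for the indicator that the $V_h$-component of $x$ is nonzero (so $\operatorname{lw}(x)=\chi_a+\chi_b+\chi_c+\chi_d$), the geometric description gives, on evaluating at a point $x=U_{ijkl}$,
\begin{align*}
Q_2(x)&=q_d(v_d)+q_c(v_c)+q_b(v_b)+q_a(v_a)=\chi_a+\chi_b+\chi_c+\chi_d,\\
Q_4(x)&=\textstyle\sum_{\{h,k\}} (q_h q_k)(x)=\sum_{\{h,k\}}\chi_h\chi_k,\\
Q_6(x)&=\textstyle\sum_h (q_k q_l q_m)(x)=\sum_h \chi_k\chi_l\chi_m,
\end{align*}
where in the $Q_4$ line the sum is over the six pairs and in the $Q_6$ line $\{k,l,m\}=\{a,b,c,d\}\setminus\{h\}$; here I use that $q_h(v_h)^2=q_h(v_h)$ over $\mathbb{F}_2$ and that a product such as $F_{ab}$, which must vanish on the $3$-flat $\langle L_a,L_b\rangle=\{x:v_c(x)=v_d(x)=0\}$, can be realized as $q_c(v_c)q_d(v_d)$, with the analogous realization $F_a=q_b q_c q_d$ for the sextics. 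The sextic $F_h$ of degree $6$ equals the product of the three quadrics through the three $5$-flats containing $L_h$ — that is the content of \cite[Lemma 2]{AspectsSegreinDCC} — which is exactly $q_k q_l q_m$.

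Thus $Q_{\omega_4}(x)=Q_6+Q_4+Q_2$ becomes a symmetric Boolean polynomial in $\chi_a,\chi_b,\chi_c,\chi_d$, namely $e_1+e_2+e_3$ where $e_i$ is the $i$-th elementary symmetric function of the four bits. Over $\mathbb{F}_2$ one computes directly that for $t:=\chi_a+\chi_b+\chi_c+\chi_d$ weights $t=0,1,2,3,4$ the value of $e_1+e_2+e_3$ is $0,1,1,1,0$ respectively — equivalently $e_1+e_2+e_3+e_4=\prod_h(1+\chi_h)+1$, which vanishes iff all $\chi_h=1$, and $e_4=\chi_a\chi_b\chi_c\chi_d$ also vanishes unless all $\chi_h=1$, so $e_1+e_2+e_3$ vanishes iff all four bits are $1$. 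Hence $Q_{\omega_4}(x)=0$ exactly when $\operatorname{lw}(x)=4$, i.e.\ on $\omega_4$, as claimed. The main obstacle is the bookkeeping in the middle step: one must pin down, from \cite[Lemma 2]{AspectsSegreinDCC} and the $\mathbb{F}_2$-identity $q_h^2=q_h$, that each $F_{hkl},F_{hk},F_h$ genuinely reduces to the stated monomial in the $q_h$'s (in particular that no cross terms survive once everything is reduced modulo $x_i^2=x_i$), after which the symmetric-function computation is immediate.
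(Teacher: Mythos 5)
Your overall strategy --- evaluate $Q_2,Q_4,Q_6$ orbit-by-orbit in terms of the indicators $\chi_h$ and check that the sum vanishes exactly on the points of line-weight $4$ --- is the same as the paper's (whose proof is just the table of orbit values). But the key middle step of your argument is wrong. The polynomial $F_{hk}$ is by definition the reduced degree-$4$ equation of the $3$-flat $\langle L_h,L_k\rangle=\{x: v_l(x)=v_m(x)=0\}$, so it must vanish \emph{exactly} on that $3$-flat. The product $q_l q_m$ vanishes as soon as $v_l=0$ \emph{or} $v_m=0$, i.e.\ on the union of two $5$-flats, not on their intersection. The correct realization is
\[
F_{hk}=1+(1+q_l)(1+q_m)=\chi_l+\chi_m+\chi_l\chi_m ,
\]
and likewise $F_h=1+(1+q_k)(1+q_l)(1+q_m)$, not $q_kq_lq_m$. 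With the correct formulas one finds $Q_4=e_1+e_2$ and $Q_6=e_1+e_3$ (each $\chi_l$ occurs in three of the six complementary pairs, so the linear parts contribute $3e_1=e_1$, while $2e_2=0$ kills the cross terms in $Q_6$), rather than your $Q_4=e_2$ and $Q_6=e_3$. Concretely your formulas give wrong orbit values: they make $Q_4$ and $Q_6$ vanish on $\omega_1$, whereas in fact both equal $1$ there ($\psi_{Q_4}=\omega_3\cup\omega_4$ and $\psi_{Q_6}=\omega_2\cup\omega_3\cup\omega_4$), and they make $Q_6$ equal to $1$ on $\omega_3$ instead of $0$.

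That your final answer is nevertheless correct is an accident of cancellation: the two spurious discrepancies are each equal to $e_1$, and $e_1+e_1=0$ over $\mathbb{F}_2$, so the true sum $e_1+(e_1+e_2)+(e_1+e_3)=e_1+e_2+e_3$ happens to coincide with the sum of your three individually wrong expressions. Your concluding evaluation of $e_1+e_2+e_3$ at weights $t=0,\dots,4$ (values $0,1,1,1,0$) is fine, so the proof is repaired by replacing each monomial $\prod q$ by $1+\prod(1+q)$ and redoing the easy symmetric-function bookkeeping. As written, however, the argument rests on false identities for $F_{hk}$ and $F_h$ --- precisely the "bookkeeping in the middle step" you yourself flagged as the main obstacle --- and that is a genuine gap.
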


\begin{proof}
Setting $\psi_{Q}:=\{p\in\operatorname*{PG}(7,2)|~Q(p)=0\},$ the last entry in
the following table follows from the three preceding entries.%
\begin{equation*}%
\begin{tabular}
[c]{cc|ccccc|cc}\cline{3-7}
&  & \multicolumn{5}{|c|}{$Q(p)$ if $p\in$} &  & \\\cline{1-2}\cline{8-9}%
\multicolumn{1}{|c}{$Q$} & \multicolumn{1}{|c|}{$\deg Q$} & $\omega_{1}$ &
$\omega_{2}$ & $\omega_{3}$ & $\omega_{4}$ &  & $\psi_{Q}$ &
\multicolumn{1}{|c|}{$|\psi_{Q}|$}\\\hline
\multicolumn{1}{|c}{$Q_{2}$} & \multicolumn{1}{|c|}{2} & 1 & 0 & 1 & 0 &  &
$\omega_{2}\cup\omega_{4}$ & \multicolumn{1}{|c|}{135}\\
\multicolumn{1}{|c}{$Q_{4}$} & \multicolumn{1}{|c|}{4} & 1 & 1 & 0 & 0 &  &
$\omega_{3}\cup\omega_{4}$ & \multicolumn{1}{|c|}{189}\\
\multicolumn{1}{|c}{$Q_{6}$} & \multicolumn{1}{|c|}{6} & 1 & 0 & 0 & 0 &  &
$\omega_{2}\cup\omega_{3}\cup\omega_{4}$ & \multicolumn{1}{|c|}{243}\\
\multicolumn{1}{|c}{$Q_{2}+Q_{4}+Q_{6}$} & \multicolumn{1}{|c|}{6} & 1 & 1 &
1 & 0 &  & $\omega_{4}$ & \multicolumn{1}{|c|}{81}\\\hline
\end{tabular}
\ \ \ \ \ \ \ \ \ \ \qedhere \label{TablePolyomega4}%HH inserted blanks to get overful hbox
\end{equation*}
\end{proof}

\begin{remark}
Of course $Q_{2}=0$ is, see Eq.~(\ref{Q = P2 + P1}), the $\mathcal{H}_{7}$
quadric $\mathcal{Q}$ of Theorem \ref{Thm H_7 = 54 + 81}. Also $Q_{4}$ was
denoted $Q_{4}^{\prime}$ in \cite[Theorem 17]{AspectsSegreinDCC} and $Q_{6}$
was denoted $Q_{6}^{\prime}$ in \cite[Example 20]{AspectsSegreinDCC}. The
sextic terms in $Q_{6}$ are readily found, since
\begin{equation*}
Q_{6}=\Pi_{i\neq1,8}(1+x_{i})+\Pi_{i\neq2,7}(1+x_{i})+\Pi_{i\neq3,6}%
(1+x_{i})+\Pi_{i\neq4,5}(1+x_{i}).\label{Q_6 =}%
\end{equation*}
Consequently, in terms of the sextic monomials $\widehat{x_{j}x_{k}}%
:=\Pi_{i\notin\{j,k\}}x_{i},$ we see that
\begin{equation}
Q_{6}=\widehat{x_{1}x_{8}}+\widehat{x_{2}x_{7}}+\widehat{x_{3}x_{6}}%
+\widehat{x_{4}x_{5}}+~(\text{terms of degree }%
<6).\label{Q_6 using ^monomials}%
\end{equation}

\end{remark}

\begin{remark}
A sextic polynomial $Q$ determines, via complete polarization, an alternating
multilinear form $\ \times^{6}V_{8}\rightarrow\mathbb{F}_{2},$ and hence an
element $b\in\wedge^{6}V_{8}^{\ast}\cong\wedge^{2}V_{8}.$ (See \cite[Section
1.1]{ShawPsiAssociate}.) Since $Q_{6}$ is $\mathcal{G}(\mathcal{L}_{4}%
)$-invariant, and since there is a unique nonzero $\mathcal{G}(\mathcal{L}%
_{4})$-invariant element of $\wedge^{2}V_{8}^{\ast}\cong\operatorname{Alt}%
(\times^{2}V_{8},\mathbb{F}_{2}),$ namely $B$ in Eq. (\ref{e_i.e_j}), it
follows, in the case $Q=Q_{6},$ that $b$ must be the $\wedge^{2}V_{8}$ image
of $B\in\wedge^{2}V_{8}^{\ast},$ namely%
\begin{equation}
b=e_{1}\wedge e_{8}+e_{2}\wedge e_{7}+e_{3}\wedge e_{6}+e_{4}\wedge e_{5}.
\label{b_Q}%
\end{equation}
It follows from (\ref{b_Q}) that the sextic terms in $Q_{6}$ must be the four
monomials in (\ref{Q_6 using ^monomials}). So the result
(\ref{Q_6 using ^monomials}) could in fact have been deduced in this
alternative manner.
\end{remark}

From the foregoing it is not too difficult to find by hand the explicit
coordinate form of the sextic polynomial $Q_{\omega_{4}}.$ In fact we used
Magma, see \cite{MAGMA}, to obtain the result below. At times writing
$1^{\prime}=8,~2^{\prime}=7,~3^{\prime}=6,~4^{\prime}=5,$ let us define the
following polynomials:%
\begin{align*}%% HH
P_{1}  &  =%
%TCIMACRO{\tsum \nolimits_{1\leq i\leq8}}%
%BeginExpansion
{\textstyle\sum\nolimits_{1\leq i\leq8}}
%EndExpansion
x_{i},\quad P_{2}=%
%TCIMACRO{\tsum \nolimits_{1\leq i<j\leq8}}%
%BeginExpansion
{\textstyle\sum\nolimits_{1\leq i<j\leq8}}
%EndExpansion
x_{i}x_{j},\quad P_{3}=%
%TCIMACRO{\tsum \nolimits_{1\leq i<j<k\leq8}}%
%BeginExpansion
{\textstyle\sum\nolimits_{1\leq i<j<k\leq8}}
%EndExpansion
x_{i}x_{j}x_{k},\nonumber\\
P_{4}  &  =%
%TCIMACRO{\tsum \nolimits_{1\leq k<l\leq4}}%
%BeginExpansion
{\textstyle\sum\nolimits_{1\leq k<l\leq4}}
%EndExpansion
x_{k}x_{k^{\prime}}x_{l}x_{l^{\prime}},\nonumber\\
P_{4}^{\prime}  &  =\sum\limits_{1\leq m\leq4}x_{m}x_{m^{\prime}}%
P_{mm^{\prime}},\quad\text{where }P_{mm^{\prime}}=\sum
\limits_{\substack{k<l,~~l\neq k^{\prime}\\k,l\notin\{m,m^{\prime}\}}%
}x_{k}x_{l},\nonumber\\
P_{5}  &  =\sum\limits_{\substack{1\leq k<l\leq4\\m\notin\{k,k^{\prime
},l,l^{\prime}\}}}x_{k}x_{k^{\prime}}x_{l}x_{l^{\prime}}x_{m},\nonumber\\
P_{6}  &  =%
%TCIMACRO{\tsum \nolimits_{1\leq k<l<m\leq4}}%
%BeginExpansion
{\textstyle\sum\nolimits_{1\leq k<l<m\leq4}}
%EndExpansion
x_{k}x_{k^{\prime}}x_{l}x_{l^{\prime}}x_{m}x_{m^{\prime}}=\widehat{x_{1}x_{8}%
}+\widehat{x_{2}x_{7}}+\widehat{x_{3}x_{6}}+\widehat{x_{4}x_{5}}~.
\label{Polys P1 to P6}%
\end{align*}

Then, assisted by Magma, we found that
\begin{equation*}
Q_{\omega_{4}}=P_{6}+P_{5}+P_{4}+P_{4}^{\prime}+P_{3}+P_{2}+P_{1}.
\label{Sextic for omege_4}%
\end{equation*}

\section{The eight distinguished spreads $\{\mathcal{L}_{85}^{ijk}%
\}_{i,j,k\in\{1,2\}}$}

Next we show that the partial spread $\mathcal{L}_{4}$ of four lines
determines a privileged set of eight extensions to a complete spread
$\mathcal{L}_{85}$ of $85$ lines in $\operatorname*{PG}(7,2).$ To this end,
for each $h\in\{a,b,c,d\}$ let us choose that element $\zeta_{h}%
\in\operatorname*{GL}(V_{h})$ of order 3 which effects the cyclic permutation
$(u_{h}(0)u_{h}(1)u_{h}(2))$ of the points of $L_{h}.$ Consider the eight
$Z_{3}$-subgroups $\{Z_{ijk}\}_{i,j,k\in\{1,2\}}$ of $\mathcal{G}%
(\mathcal{L}_{4})$ defined by%
\begin{equation}
Z_{ijk}=\langle A_{ijk}\rangle,\quad\text{where }A_{ijk}:=(\zeta_{a}%
)^{i}\oplus(\zeta_{b})^{j}\oplus(\zeta_{c})^{k}\oplus\zeta_{d}%
~.\label{8 Z3 subgroups}%
\end{equation}
When working using the basis $\mathcal{B}$ we will make the following choices
for the four $\zeta_{h}$ in (\ref{8 Z3 subgroups}):
\begin{align}
\zeta_{a} &  :e_{1}\mapsto e_{8}\mapsto e_{1}+e_{8},\quad\zeta_{b}%
:e_{7}\mapsto e_{2}\mapsto e_{2}+e_{7},\nonumber\\
\zeta_{c} &  :e_{3}\mapsto e_{6}\mapsto e_{3}+e_{6},\quad\zeta_{d}%
:e_{5}\mapsto e_{4}\mapsto e_{4}+e_{5}.\label{zeta-a, ... ,zeta_d}%
\end{align}
We will also choose the $u_h(0)$ so that $U_{0000}$ is the unit point $u$ of
the basis $\mathcal{B}$.
%%We will also make the choices
%%\begin{equation}
%%u_{a}(0)=e_{1}+e_{8},\ u_{b}(0)=e_{2}+e_{7},\ u_{c}(0)=e_{3}+e_{6}%
%%,\ u_{d}(0)=e_{4}+e_{5}.\label{choices for u_h(0)}%
%%\end{equation}
Since $(A_{ijk})^{2}+A_{ijk}+I=0,$ each $Z_{ijk}$ acts fixed-point-free on
$\operatorname*{PG}(7,2)$ and gives rise to a spread $\mathcal{L}_{85}^{ijk}$
of lines in $\operatorname*{PG}(7,2),$ with a point $p\in\operatorname*{PG}%
(7,2)$ lying on the line
\begin{equation}
L^{ijk}(p):=\{p,A_{ijk}p,(A_{ijk})^{2}p\}\in\mathcal{L}_{85}^{ijk}%
.\label{p lies on line Lijk}%
\end{equation}
Note that if in (\ref{8 Z3 subgroups}) one or more of the $\zeta_{h}$ is
replaced by the identity element $I_{h}\in\operatorname*{GL}(V_{h})$ then,
although a $Z_{3}$-subgroup of $\mathcal{G}(\mathcal{L}_{4})$ which preserves
the lines is generated, it is not fixed-point-free on $\operatorname*{PG}%
(7,2).$ So there exist precisely eight extensions of $\mathcal{L}_{4}$ to a
Desarguesian spread of $85$ lines in $\operatorname*{PG}(7,2).$ Observe that
in the case where $p$ is the unit vector $u:=\sum\nolimits_{i=1}^{8}e_{i}$ of
the basis $\mathcal{B}$ then the eight lines (\ref{p lies on line Lijk}) are
distinct: for, using $ijkl$ as shorthand for $e_{i}+e_{j}+e_{k}+e_{l},$ they
are explicitly
\begin{align}
L^{111}(u) &  =\{u,1357,2468\},~~L^{122}(u)=\{u,1256,3478\},\nonumber\\
L^{212}(u) &  =\{u,5678,1234\},~~L^{221}(u)=\{u,2358,1467\},\nonumber\\
L^{222}(u) &  =\{u,2568,1347\},~~L^{211}(u)=\{u,3578,1246\},\nonumber\\
L^{121}(u) &  =\{u,1235,4678\},~~L^{112}%
(u)=\{u,1567,2348\}.\label{8 lines through u}%
\end{align}

\begin{lemma}
For $p\in\operatorname*{PG}(7,2)$ the eight lines $L^{ijk}(p)$ are distinct if
and only if $p\in\omega_{4}.$
\end{lemma}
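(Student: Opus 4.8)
The plan is to analyze, for an arbitrary point $p=U_{ijkl}$, when the eight lines $L^{stu}(p)$, $s,t,u\in\{1,2\}$, through $p$ coincide in pairs or coincide among themselves. First I would reduce to understanding the action of the eight generators $A_{stu}$ on $p$. Since each $A_{stu}=(\zeta_a)^s\oplus(\zeta_b)^t\oplus(\zeta_c)^u\oplus\zeta_d$ acts componentwise, and each $\zeta_h$ is order-$3$ acting on the $2$-space $V_h$, the image $A_{stu}\,p$ is obtained by independently rotating each component $v_h$ of $p$ by the appropriate power of $\zeta_h$. The key observation is that a coordinate $v_h$ with $v_h=0$ (i.e. the index is $\emptyset$) is \emph{fixed} by every power of $\zeta_h$, whereas a nonzero $v_h$ is moved by $\zeta_h$ and by $\zeta_h^2$ to the other two nonzero vectors of $V_h$, and is fixed only by the identity. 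This already shows that if some coordinate of $p$ is $\emptyset$ — that is, if $\operatorname{lw}(p)<4$ — then one can find two of the eight generators that agree on the three movable coordinates of $p$ (because with at most three ``live'' indices $a,b,c,d$, two of the eight sign patterns on $\{s,t,u\}$ will restrict to the same pattern on the live subset once $d$ is handled uniformly), forcing two of the lines $L^{stu}(p)$ to coincide.

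Next, for the forward direction I would make this precise: suppose $\operatorname{lw}(p)=r\le 3$. Since $L_a,L_b,L_c,L_d$ are themselves orbits (the case $r=1$) or lie in the union $\omega_1\cup\omega_2\cup\omega_3$, one checks that in each such case there are repetitions. Concretely, if $i=\emptyset$ then $(\zeta_a)^1$ and $(\zeta_a)^2$ both fix the $a$-component, so $L^{1tu}(p)=\{p,A_{1tu}p,\dots\}$ and $L^{2tu}(p)$ use generators differing only in the (irrelevant) $a$-slot; hence $A_{1tu}p=A_{2tu}p$ and the two lines are equal. Thus $\operatorname{lw}(p)\le 3$ forces at least two of the eight lines to coincide, giving the contrapositive of the ``if'' direction.

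For the converse, assume $p\in\omega_4$, so all four components $v_a,v_b,v_c,v_d$ are nonzero. I would show $A_{stu}p=A_{s't'u'}p$ forces $(s,t,u)=(s',t',u')$: comparing $a$-components gives $(\zeta_a)^s v_a=(\zeta_a)^{s'}v_a$ with $v_a\neq0$, and since $\langle\zeta_a\rangle\cong Z_3$ acts regularly on the three nonzero vectors of $V_a$, this forces $s\equiv s'\pmod 3$, i.e. $s=s'$ as $s,s'\in\{1,2\}$; similarly $t=t'$, $u=u'$. Hence the eight points $A_{stu}p$ are pairwise distinct. It then remains to rule out the possibility that two \emph{distinct} generators produce lines that coincide as sets without the points coinciding — i.e. $A_{stu}p=(A_{s't'u'})^2 p$ for some pair. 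But $(A_{s't'u'})^2=A_{s't'u'}^{-1}$ acts on the $a$-component as $(\zeta_a)^{-s'}=(\zeta_a)^{3-s'}$, so equating $a$-components gives $(\zeta_a)^s v_a=(\zeta_a)^{3-s'}v_a$, whence $s\equiv 3-s'\pmod 3$, i.e. $s+s'\equiv 0\pmod 3$; with $s,s'\in\{1,2\}$ this means $\{s,s'\}$ is a fixed relation per coordinate, and running this simultaneously for $b,c,d$ — recalling the $d$-slot is \emph{always} $\zeta_d$ in every $A_{stu}$, so the $d$-component equation reads $\zeta_d v_d=\zeta_d^2 v_d$, which is false for $v_d\neq 0$ — yields a contradiction. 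Therefore all eight lines $L^{stu}(p)$ are genuinely distinct when $p\in\omega_4$.

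The main obstacle is the bookkeeping in the converse: one must check not only that the eight \emph{points} $A_{stu}p$ are distinct but that no two of the eight \emph{lines} (each a $3$-set containing $p$) coincide, which a priori could happen if $A_{stu}p$ equals $A_{s't'u'}^2 p$; the clean resolution is the remark that the $d$-component is governed by $\zeta_d$ uniformly across all eight generators, so the ``inverse'' collision is obstructed in the $d$-slot precisely because $v_d\neq 0$. Everything else is a direct consequence of the regular action of each $Z_3\le\operatorname{GL}(V_h)$ on the nonzero vectors of $V_h$, together with the orbit description $\omega_4=\{\,p:\operatorname{lw}(p)=4\,\}$ from Eq.~(\ref{4 orbits omega 1,2,3,4}).
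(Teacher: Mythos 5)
There is a genuine gap in your forward direction, localized to one case: $p=(v_a,v_b,v_c,0)\in\omega_3$, i.e.\ line-weight $3$ with the unique dead coordinate in the $d$-slot. Your claimed mechanism --- ``one can find two of the eight generators that agree on the three movable coordinates of $p$'' --- fails there, because the eight generators $A_{stu}$ are parametrized precisely by their action on the $a,b,c$ slots; no two of them agree on the live coordinates, and in fact the eight points $A_{stu}p$ are pairwise \emph{distinct} (by the same regular-action argument you use in the converse). Your concrete computation ``if $i=\emptyset$ then $A_{1tu}p=A_{2tu}p$'' handles a dead $a$- (or, symmetrically, $b$- or $c$-) slot, which does cover all of $\omega_1$ and $\omega_2$ (any point there has at least one dead slot among $a,b,c$), but not this piece of $\omega_3$. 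The conclusion is still true there, for the opposite reason: the ``inverse collision'' you carefully rule out in the converse now actually occurs, since with $v_d=0$ the $d$-slot obstruction $\zeta_dv_d\neq\zeta_d^2v_d$ disappears, giving $(A_{stu})^2p=A_{(3-s)(3-t)(3-u)}p$ and hence $L^{stu}(p)=L^{(3-s)(3-t)(3-u)}(p)$, so the eight lines coincide in pairs. The repair is one line using machinery you already built, but as written the step ``thus $\operatorname{lw}(p)\le3$ forces a coincidence'' does not follow from what precedes it. (The paper is also terse here, treating only $p=(0,v_b,v_c,v_d)$ and saying ``similarly''; the honest justification is that the \emph{set} of eight spreads, unlike their labelling, is invariant under $\operatorname{Sym}(\{a,b,c,d\})$.)

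Your converse direction, by contrast, is correct and complete, and it takes a genuinely different route from the paper: the paper verifies distinctness of the eight lines only for the unit point $u$ by explicit listing and then invokes transitivity of the group on $\omega_4$, whereas your componentwise argument (regularity of each $\langle\zeta_h\rangle$ on $V_h\setminus\{0\}$, plus the uniform $\zeta_d$ in the last slot blocking $A_{stu}p=(A_{s't'u'})^2p$) works directly for every $p\in\omega_4$ and correctly distinguishes equality of lines from equality of points, a subtlety the explicit table hides.
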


\begin{proof}
We have in (\ref{8 lines through u}) just seen that the eight lines are
distinct for the point $u\in\omega_{4},$ and hence for all $p\in\omega_{4}.$
Consider a point $p=(0,v_{b},v_{c},v_{d})$ of line-weight 3. Since
$A_{1jk}p=A_{2jk}p,$ and so $L^{1jk}(p)=L^{2jk}(p),$ the lines $L^{ijk}(p)$
coincide in pairs. Similarly for other points $p\in\omega_{3}.$ For a point
$p\in\omega_{2}$ of line-weight 2 the analogous reasoning shows that only two
of the lines $L^{ijk}(p)$ are distinct. And of course if $p\in\omega_{1},$
that is if $p\in L_{h}$ for some $h\in\{a,b,c,d\},$ then $L^{ijk}(p)=L_{h}$
for all eight values of $ijk.$
\end{proof}

\medskip

Recall that on a $\mathcal{H}_{7}$ quadric there exist two systems of
\emph{generators, }see \cite[Section 22.4]{HirschfeldThas3}, elements of either
system being solids ($3$-flats). Consequently it follows from the next theorem
that \emph{the foregoing eight }$Z_{3}$\emph{-subgroups of
}$\mathcal{G}(\mathcal{L}_{4})$\emph{ divide naturally into two sets of size
four, namely }$\boldsymbol{Z}$\emph{ and }$\boldsymbol{Z}^{\ast}$ where%
\begin{equation}
\boldsymbol{Z}=\{Z_{111},Z_{122},Z_{212},Z_{221}\},\quad\boldsymbol{Z}^{\ast
}=\{Z_{222},Z_{211},Z_{121},Z_{112}\}. \label{Z_0 and Z_1}%
\end{equation}

\begin{theorem}
\label{Thm Pi and PI* are generators of H_7}For $p\in\omega_{4}$ let $\Pi(p)$
denote the flat spanned by the four lines $L^{ijk}(p),~ijk\in
\{111,122,212,221\},$ and let $\Pi^{\ast}(p)$ denote the flat spanned by the
four lines $L^{ijk}(p),~ijk\in\{222,211,121,112\}.$ Then $\Pi(p)$ and
$\Pi^{\ast}(p)$ are generators of $\mathcal{H}_{7}$ which moreover belong to
different systems.
\end{theorem}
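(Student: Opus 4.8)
The plan is to show directly that each of $\Pi(p)$ and $\Pi^*(p)$ is a $3$-flat lying entirely on $\mathcal{H}_7$, and then to distinguish the two systems by a parity (intersection) argument. First I would check that, for $p\in\omega_4$, the four lines $L^{ijk}(p)$ with $ijk\in\{111,122,212,221\}$ span a solid rather than something smaller. Since $\mathcal{G}_{81}\cong(Z_3)^4$ is transitive on $\omega_4$ (indeed $\omega_4$ is an orbit of $\mathcal{G}_{81}$), it suffices to verify this for the single point $p=u$, where the eight lines are listed explicitly in~(\ref{8 lines through u}); a direct rank computation on the twelve listed vectors $u,1357,2468,1256,3478,5678,1234,2358,1467$ shows the span has dimension $4$, i.e.\ $\Pi(u)$ is a $3$-flat, and likewise $\Pi^*(u)=\langle L^{222}(u),L^{211}(u),L^{121}(u),L^{112}(u)\rangle$ is a $3$-flat. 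By the $\mathcal{G}(\mathcal{L}_4)$-equivariance of the whole construction this then holds for every $p\in\omega_4$.

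Next I would show $\Pi(p)\subseteq\mathcal{H}_7$. The form $Q$ of~(\ref{Q = P2 + P1}) is $\mathcal{G}(\mathcal{L}_4)$-invariant, hence in particular invariant under each $A_{ijk}\in\mathcal{G}(\mathcal{L}_4)$; so on the line $L^{ijk}(p)=\{p,A_{ijk}p,A_{ijk}^2p\}$ the form is constant, equal to $Q(p)$. For $p\in\omega_4\subset\mathcal{H}_7$ (Theorem~\ref{Thm H_7 = 54 + 81}) we have $Q(p)=0$, so every vertex of the spanning set of $\Pi(p)$ lies on $\mathcal{H}_7$. To conclude that the whole $3$-flat lies on $\mathcal{H}_7$ it is enough to check that the four spanning lines are pairwise "orthogonal" in the sense $B(L^{ijk}(p),L^{i'j'k'}(p))=0$: a $3$-flat all of whose spanning points are singular and which is totally isotropic for $B$ is automatically totally singular for $Q$, because $Q(x+y)=Q(x)+Q(y)+B(x,y)$. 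Again by transitivity of $\mathcal{G}_{81}$ on $\omega_4$ this reduces to the single verification at $p=u$ using the explicit vectors in~(\ref{8 lines through u}) and the coordinate form of $B$; and a solid on $\mathcal{H}_7$ is precisely a generator.

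Finally I must show that $\Pi(p)$ and $\Pi^*(p)$ belong to \emph{different} systems. The standard criterion over $\mathbb{F}_2$ is: two generators of $\mathcal{H}_7$ (which are $3$-flats, projective dimension $3$) lie in the same system iff their intersection has odd projective dimension ($-1$, $1$, or $3$), and in different systems iff the intersection has even projective dimension ($0$ or $2$); see \cite[Section 22.4]{HirschfeldThas3}. Both $\Pi(p)$ and $\Pi^*(p)$ contain the point $p$ itself, so the intersection is nonempty; it remains to compute $\dim(\Pi(p)\cap\Pi^*(p))$, and by transitivity once more this is a single linear-algebra check at $p=u$ using~(\ref{8 lines through u}). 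I expect the intersection to be exactly the point $u$ (projective dimension $0$), placing the two solids in opposite systems. The main obstacle is thus not conceptual but is this final dimension count: one must confirm that $\Pi(u)\cap\Pi^*(u)$ is precisely $\langle u\rangle$ and not larger (e.g.\ a line), since an odd-dimensional intersection would put them in the same system and break the statement; this is where the specific arithmetic of the chosen $\zeta_h$ in~(\ref{zeta-a, ... ,zeta_d}) and the resulting split~(\ref{Z_0 and Z_1}) does the real work, and it is most safely discharged by the explicit Magma-style computation already alluded to in the paper.
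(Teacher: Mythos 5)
Your overall strategy is sound and is essentially the paper's: show that $\Pi(p)$ and $\Pi^{\ast}(p)$ are solids lying on $\mathcal{H}_{7}$, then decide the systems by the intersection--parity criterion of \cite[Theorem 22.4.12, Corollary]{HirschfeldThas3}. The reduction to $p=u$ via the transitivity of $\mathcal{G}_{81}$ on $\omega_{4}$ is legitimate (the $A_{ijk}$ lie in the abelian group $\mathcal{G}_{81}$, so $\Pi(gp)=g\Pi(p)$ for $g\in\mathcal{G}_{81}$, and $\mathcal{G}_{81}$ preserves $Q$), and your observation that a flat spanned by singular, pairwise orthogonal points is totally singular is correct. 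The paper avoids coordinates altogether: writing $p=(v_{a},v_{b},v_{c},v_{d})$ and using $(\zeta_{h})^{2}+\zeta_{h}=I_{h}$, it lists all $15$ points of $\Pi(p)$ explicitly --- the nine points of line-weight $4$ on the four lines plus the six line-weight-$2$ points of (\ref{6 points of line-weight 2}) --- and then invokes Theorem \ref{Thm H_7 = 54 + 81} to see that all of them lie in $\omega_{2}\cup\omega_{4}=\mathcal{H}_{7}$. That route gives the dimension count and the total singularity in one stroke, with no case $p=u$ needed.

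The one substantive error is your predicted outcome of the final computation: $\Pi(u)\cap\Pi^{\ast}(u)$ is \emph{not} the single point $\langle u\rangle$. Already $1357+1256=2367\in\Pi(u)$ equals $2568+3578\in\Pi^{\ast}(u)$, i.e.\ the line-weight-$2$ point $(0,e_{2}+e_{7},e_{3}+e_{6},0)$ lies in both solids. In fact the six line-weight-$2$ points of (\ref{6 points of line-weight 2}) are common to $\Pi(p)$ and $\Pi^{\ast}(p)$, and together with $p$ they form a plane, so $\dim\bigl(\Pi(p)\cap\Pi^{\ast}(p)\bigr)=2$. You are saved by the fact that $2$, like your guessed $0$, is even, so your (correctly stated) parity criterion still places the two generators in different systems; but had you carried out the check expecting a point and found a plane, you would have needed to recognise that this does not "break the statement". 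So the proposal as written reaches the right conclusion, but the deferred computation would not confirm the intermediate claim you make about it.
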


\begin{proof}
The flat $\Pi(p)=\langle L^{111}(p),L^{122}(p),L^{212}(p),L^{221}(p)\rangle$
for the point $p=(v_{a},v_{b},v_{c},v_{d})\in\omega_{4}$ is seen, upon using
$(\zeta_{h})^{2}+\zeta_{h}=I_{h},$ to consist of the nine points
$L^{111}(p)\cup L^{122}(p)\cup L^{212}(p)\cup L^{221}(p)$ of line-weight 4
together with the following six points of line-weight 2:%
\begin{align}
&(v_{a},v_{b},0,0),~(v_{a},0,v_{c},0),~(v_{a},0,0,v_{d}),\nonumber\\
&(0,0,v_{c},v_{d}),~(0,v_{b},0,v_{d}),~(0,v_{b},v_{c},0).
\label{6 points of line-weight 2}%
\end{align}
By Theorem \ref{Thm H_7 = 54 + 81}, for each $p\in\omega_{4},$ the flat
$\Pi(p)$ is in fact a solid on the quadric $\mathcal{H}_{7}.$ Similarly the
same applies to the flat $\Pi^{\ast}(p),$ whose six points of line-weight 2
moreover coincide with those of $\Pi(p).$ So $\Pi(p)\cap\Pi^{\ast}(p)$ is the
isotropic plane consisting of $p=(v_{a},v_{b},v_{c},v_{d})$ together with the
six points (\ref{6 points of line-weight 2}). Consequently, see \cite[Theorem
22.4.12, Corollary]{HirschfeldThas3}, for each $p\in\omega_{4}$ the generators
$\Pi(p)$ and $\Pi^{\ast}(p)$ belong to different systems.
\end{proof}

\section{The normal subgroup $\mathcal{G}_{81}$ of $\mathcal{G}(\mathcal{L}%
_{4})$}

Let $Z_{h}$ denote that $Z_{3}$ subgroup of $\mathcal{G}(\mathcal{L}_{4})$
which fixes pointwise each of the three lines $\mathcal{L}_{4}\setminus
L_{h},~h\in\{a,b,c,d\}.$ Then clearly the elementary abelian group
\begin{equation*}
\mathcal{G}_{81}:=Z_{a}\times Z_{b}\times Z_{c}\times Z_{d}\cong Z_{3}\times
Z_{3}\times Z_{3}\times Z_{3} \label{G_81}%
\end{equation*}
is a normal subgroup of $\mathcal{N}=\operatorname*{GL}(V_{a})\times
\operatorname*{GL}(V_{b})\times\operatorname*{GL}(V_{c})\times
\operatorname*{GL}(V_{d})$ and also of $\mathcal{G}(\mathcal{L}_{4}%
)=\mathcal{N}\rtimes\operatorname*{Sym}(4).$ Observe that $\omega_{4}$ is a
single $\mathcal{G}_{81}$-orbit. One easily sees that $\mathcal{G}_{81}$ is
equally well the direct product $Z_{111}\times Z_{122}\times Z_{212}\times
Z_{221}$ of the four members of $\boldsymbol{Z,}$ and also the direct product
$Z_{222}\times Z_{211}\times Z_{121}\times Z_{112}$ of the four members of
$\boldsymbol{Z}^{\ast}.$

Consider now \emph{any} $Z_{3}\times Z_{3}\times Z_{3}$ subgroup
$H<\mathcal{G}_{81}$. If $\mathcal{G}_{81}=H\cup H^{\prime}\cup H^{\prime
\prime}$ denotes the decomposition of $\mathcal{G}_{81}$ into the cosets of
$H$ then we define subsets $\mathcal{R}:=\mathcal{R}_{H},\mathcal{R}^{\prime
}:=\mathcal{R}_{H}^{\prime},\mathcal{R}^{\prime\prime}:=\mathcal{R}%
_{H}^{\prime\prime}$ of $\omega_{4}$ by
\begin{equation}
\mathcal{R}=\{hu,h\in H\},~\mathcal{R}^{\prime}=\{h^{\prime}u,h^{\prime}\in
H^{\prime}\},~\mathcal{R}^{\prime\prime}=\{h^{\prime\prime}u,h^{\prime\prime
}\in H^{\prime\prime}\}.\label{R, R', R''}%
\end{equation}
In particular $\mathcal{R}=\mathcal{R}_{H}$ is the orbit of $u$ under the
action of the group $H.$ Each such subgroup $H<\mathcal{G}_{81}$ gives rise to
a decomposition $\omega_{4}=\mathcal{R}\cup\mathcal{R}^{\prime}\mathcal{\cup
R}^{\prime\prime}$ of $\omega_{4}$ into a triplet of $27$-sets.

As we will now demonstrate, \emph{the study of such triplets is greatly
simplified by viewing }$\mathcal{G}_{81}$\emph{ in a }$\operatorname*{GF}%
(3)$\emph{ light.}

\subsection{A $\operatorname*{GF}(3)$ view of $\mathcal{G}_{81}$
\label{SSec GF(3) view of G_81}}

For $i,j,k,l\in\mathbb{F}_{3}=\operatorname*{GF}(3)=\{0,1,2\}$ define%
\begin{equation*}
A_{ijkl}:=(\zeta_{a})^{i}\oplus(\zeta_{b})^{j}\oplus(\zeta_{c})^{k}%
\oplus(\zeta_{d})^{l}~.\label{Aijkl = zetas}%
\end{equation*}
\emph{Note that if }$i,j,k\in\{1,2\}$\emph{ then }$A_{ijk1}=A_{ijk},$\emph{ as
previously defined in (\ref{8 Z3 subgroups}).} In the following we will view
$ijkl$ as shorthand for the element $(i,j,k,l)\in(\mathbb{F}_{3})^{4}.$ Since
\begin{equation}
A_{\mathcal{\sigma}}A_{\tau}=A_{\mathcal{\sigma}+\tau},~\mathcal{\sigma}%
,\tau\in(\mathbb{F}_{3})^{4},\label{isomorphism A}%
\end{equation}
observe that $A:\sigma\mapsto A_{\sigma},~\mathcal{\sigma}\in(\mathbb{F}%
_{3})^{4},$ is an isomorphism mapping the additive group $(\mathbb{F}_{3}%
)^{4}$ onto the multiplicative group $\mathcal{G}_{81}.$ Now the orbit of any
point $p\in\omega_{4}$ under the action of the group $\mathcal{G}_{81}\ $is
the whole of $\omega_{4}.$ In particular this is so for the unit point
$u:=\sum\nolimits_{i=1}^{8}e_{i}$ of the basis $\mathcal{B}.$
\emph{Consequently the }$81$\emph{-set }$\omega_{4}$\emph{ is in bijective
correspondence with }$(\mathbb{F}_{3})^{4}$\emph{ as given by the map }%
$\theta_{u}:(\mathbb{F}_{3})^{4}\rightarrow\omega_{4}$ defined by\emph{ }%
\begin{equation}
\theta_{u}(\sigma)=p_{\sigma}:=A_{\sigma}u,~\sigma\in(\mathbb{F}_{3}%
)^{4}.\label{Def theta_u}%
\end{equation}
Observe that the choices made in (\ref{points of L_h}) and
%(\ref{choices for u_h(0)})
(\ref{zeta-a, ... ,zeta_d}) imply that $\theta_{u}(ijkl)=U_{ijkl}$ for all
$ijkl\in(\mathbb{F}_{3})^{4}.$

In the $\operatorname*{GF}(3)$ space $V(4,3)=(\mathbb{F}_{3})^{4}$ we will
chiefly employ the basis $\mathcal{B}_{\varepsilon}:=\{\varepsilon
_{1},\varepsilon_{2},\varepsilon_{3},\varepsilon_{4}\},$ where%
\begin{equation}
\varepsilon_{1}=1000,~\varepsilon_{2}=0100,~\varepsilon_{3}=0010,~\varepsilon
_{4}=0001, \label{epsilon_1,2,3,4}%
\end{equation}
and then write a general element $\xi=%
%TCIMACRO{\tsum \nolimits_{r=1}^{4}}%
%BeginExpansion
{\textstyle\sum\nolimits_{r=1}^{4}}
%EndExpansion
\xi_{r}\varepsilon_{r}\in V(4,3)$ as $\xi=\xi_{1}\xi_{2}\xi_{3}\xi_{4}.$ We
denote the weight of $\xi\in(\mathbb{F}_{3})^{4}$ in the basis $\mathcal{B}%
_{\varepsilon}$ by $\operatorname{wt}_{\varepsilon}(\xi).$

\smallskip

\emph{Let us now study subgroups of }$\mathcal{G}_{81}$ \emph{by viewing them
in the light of their corresponding subspaces in the vector space
}$V(4,3)=(\mathbb{F}_{3})^{4}.$

A $Z_{3}$ subgroup of $\mathcal{G}_{81}\ $is of the form $\{I,A_{\sigma
},A_{2\sigma}\}$ for some non-zero $\sigma\in V(4,3).$ \emph{So }%
$\mathcal{G}_{81}$\emph{ contains }$40$\emph{ subgroups }$\cong Z_{3}$
\emph{which are in bijective correspondence with the }$40$\emph{ points of the
projective space }$\operatorname*{PG}(3,3)=\mathbb{P}V(4,3)$. If we denote by
$\Xi\cup\Xi^{\ast}$ the following eight elements of $(\mathbb{F}_{3})^{4}:$
\begin{align}
\Xi &  :\quad\mathcal{\alpha}=1111,~\mathcal{\beta}=1221,~\mathcal{\gamma
}=2121,~\mathcal{\delta}=2211,\nonumber\\
\Xi^{\ast} &  :\quad\mathcal{\alpha}^{\ast}=2221,~\mathcal{\beta}^{\ast
}=2111,~\mathcal{\gamma}^{\ast}=1211,~\mathcal{\delta}^{\ast}%
=1121,\label{XiXi* alpha ... delta*}%
\end{align}
then observe that $A_{\alpha},A_{\beta},\ldots,A_{\delta^{\ast}}$ are the
respective generators of the eight $Z_{3}$-subgroups
$Z_{111},Z_{122},\ldots,Z_{112}\in\boldsymbol{Z}\cup\boldsymbol{Z}^{\ast}$
considered in
(\ref{Z_0 and Z_1}). Now under the action by conjugacy of $\mathcal{G}%
(\mathcal{L}_{4})$ on $\mathcal{G}_{81}$ the particular $4$-set $\{Z_{a}%
,Z_{b},Z_{c},Z_{d}\}=\{\langle A_{\varepsilon_{1}}\rangle,\langle
A_{\varepsilon_{2}}\rangle,\langle A_{\varepsilon_{3}}\rangle,\langle
A_{\varepsilon_{4}}\rangle\}$ of $Z_{3}$ subgroups is fixed, whence
\begin{equation*}% HH
\mathcal{T}_{\varepsilon}:=\{\langle\varepsilon_{1}\rangle,\langle
\varepsilon_{2}\rangle,\langle\varepsilon_{3}\rangle,\langle\varepsilon
_{4}\rangle\}
\end{equation*}
\emph{is a }$\mathcal{G}(\mathcal{L}_{4})$\emph{-distinguished
tetrahedron of reference in }$\operatorname*{PG}(3,3).$ Consequently take note
that \emph{the eight }$Z_{3}$\emph{ subgroups }$\{\langle A_{\rho}%
\rangle\}_{\rho\in\Xi\cup\Xi^{\ast}}$\emph{ considered in (\ref{Z_0 and Z_1})
are picked out as the only }$Z_{3}$\emph{ subgroups }$\langle A_{\rho}\rangle$
of $\mathcal{G}_{81}$\emph{ for which} $\operatorname{wt}_{\varepsilon}%
(\rho)=4.$

Next let us consider subgroups $H\cong Z_{3}\times Z_{3}\times Z_{3}$ of
$\mathcal{G}_{81}.$

\begin{theorem}
\label{Thm 40 subgps}The normal subgroup $\mathcal{G}_{81}<\mathcal{G}%
(\mathcal{L}_{4})$ contains precisely $40$ subgroups $H\cong Z_{3}\times
Z_{3}\times Z_{3}.$ These fall into four conjugacy classes $\mathcal{C}%
_{0},\mathcal{C}_{1},\mathcal{C}_{2},\mathcal{C}_{3}$ of $\mathcal{G}%
(\mathcal{L}_{4}),$ of respective sizes $8,16,12,4.$
\end{theorem}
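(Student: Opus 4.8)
The plan is to push the $\operatorname{GF}(3)$ dictionary of Section~\ref{SSec GF(3) view of G_81} one step further: a $Z_3\times Z_3\times Z_3$ subgroup of $\mathcal{G}_{81}$ becomes a $3$-dimensional subspace of $V(4,3)$, the conjugation action of $\mathcal{G}(\mathcal{L}_4)$ becomes a concrete matrix group acting on $V(4,3)$, and the four conjugacy classes then fall out of an orbit count on $\operatorname{PG}(3,3)$. For the count of $40$: via the isomorphism $A\colon(\mathbb{F}_3)^4\to\mathcal{G}_{81}$ of~(\ref{isomorphism A}), a subgroup $H\cong Z_3\times Z_3\times Z_3$ corresponds bijectively to a $3$-dimensional subspace $U_H\le V(4,3)$, hence to a hyperplane, hence — writing $U_H=\{\xi\in V(4,3):\sum_{r=1}^{4}n_r\xi_r=0\}$ — to a vector $n_H=(n_1,n_2,n_3,n_4)$ determined up to a nonzero scalar; there are $(3^4-1)/(3-1)=40$ such $\langle n_H\rangle$, and so $40$ subgroups $H$.

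Next, conjugation defines a homomorphism $\rho\colon\mathcal{G}(\mathcal{L}_4)\to\operatorname{GL}(4,3)$, and the claim is that $\operatorname{im}\rho=W$, the group of signed permutation matrices over $\mathbb{F}_3$, of order $2^4\cdot4!=384$. Indeed $Z_h$ is the unique subgroup of order $3$ in $\operatorname{GL}(V_h)\cong S_3$ — hence normal there — and the centraliser of a $3$-cycle in $S_3$ is its own cyclic subgroup; so conjugation by $g\in\operatorname{GL}(V_h)$ fixes $\zeta_h$ when $g\in Z_h$ and sends $\zeta_h\mapsto\zeta_h^{-1}$ otherwise. Through $A$ this shows $\mathcal{N}$ acts on $(\mathbb{F}_3)^4$ precisely through the diagonal sign-change group $D\cong(\mathbb{Z}/2)^4$, while $\operatorname{Sym}(4)$ permutes the summands $V_a,V_b,V_c,V_d$ and hence the four coordinates; together these generate $W=D\rtimes\operatorname{Sym}(4)$. (Consistently, $\ker\rho$ is the centraliser of $\mathcal{G}_{81}$ in $\mathcal{G}(\mathcal{L}_4)$, namely $\mathcal{G}_{81}$ itself, and $31{,}104/384=81$.) Passing from $U_H$ to its parameter $n_H$ replaces $M\in W$ by its transpose-inverse, which is again in $W$ because $(\pm1)^{-1}=\pm1$ in $\mathbb{F}_3$; so the conjugacy classes sought are the orbits of $W$, acting by signed permutations of the coordinates, on the $40$ points $\langle n\rangle$ of $\operatorname{PG}(3,3)$.

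Finally the orbit count: the support size $\operatorname{wt}_{\varepsilon}(n)$ is constant on each $\langle n\rangle$ and $W$-invariant, taking the values $1,2,3,4$, and $W$ is transitive on each level, since $\operatorname{Sym}(4)$ realises every bijection between supports of equal cardinality and $D$ rescales the nonzero entries freely. Hence there are exactly four orbits; the level $\operatorname{wt}_{\varepsilon}(n)=w$ contains $\binom{4}{w}2^{\,w-1}$ points, i.e.\ $4,12,16,8$ for $w=1,2,3,4$, and $4+12+16+8=40$. Indexing a subgroup $H$ by the number $j$ of the distinguished subgroups $Z_a,Z_b,Z_c,Z_d$ (equivalently, the number of the lines $\langle\varepsilon_r\rangle$) contained in it, one has $j=4-\operatorname{wt}_{\varepsilon}(n_H)$, and these four orbits are precisely $\mathcal{C}_0,\mathcal{C}_1,\mathcal{C}_2,\mathcal{C}_3$ of sizes $8,16,12,4$. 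The only delicate point is the identification $\operatorname{im}\rho=W$ — that $\mathcal{N}$ contributes nothing beyond coordinate sign-changes and that the image is no larger than this hyperoctahedral group; once that is in hand, both the $40$-count and the orbit enumeration are routine $\mathbb{F}_3$-linear-algebra and support bookkeeping.
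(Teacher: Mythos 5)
Your proposal is correct and takes essentially the same route as the paper: both identify the $(Z_3)^3$ subgroups with the $40$ planes of $\operatorname{PG}(3,3)$ and classify them by their relation to the $\mathcal{G}(\mathcal{L}_4)$-distinguished tetrahedron $\mathcal{T}_{\varepsilon}$ (your weight $\operatorname{wt}_{\varepsilon}(n_H)$ of the normal vector is just the dual of the paper's count $r$ of vertices contained, via $r=4-\operatorname{wt}_{\varepsilon}(n_H)$). The only difference is that you explicitly compute the image of the conjugation representation as the hyperoctahedral group of signed permutation matrices and verify transitivity on each weight level, a detail the paper compresses into ``planes of the same kind are seen to correspond to conjugate subgroups.''
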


\begin{proof}
Each subgroup $\langle A_{\rho},A_{\sigma},A_{\tau}\rangle\cong Z_{3}\times
Z_{3}\times Z_{3}$ arises as
\begin{equation*}
\{A_{\lambda}|~\lambda\in V_{3}:={{\prec} \rho,\sigma,\tau{\succ}}\}
\end{equation*}
from a corresponding projective plane $\mathbb{P}%
V_{3}=\langle\langle\rho\rangle,\langle\sigma\rangle,\langle\tau\rangle
\rangle$ in $\operatorname*{PG}(3,3).$ Now there exist precisely $40$ planes
in $\operatorname*{PG}(3,3),$ and these fall into four kinds $\mathcal{P}%
_{0}$, $\mathcal{P}_{1}$, $\mathcal{P}_{2}$, $\mathcal{P}_{3},$ where
$\mathcal{P}_{r}$ denotes those planes in $\operatorname*{PG}(3,3)$ which
contain precisely $r$ of the vertices $\langle\varepsilon_{i}\rangle$ of the
tetrahedron of reference $\mathcal{T}_{\varepsilon}.$ There are $8$ planes of
kind $\mathcal{P}_{0},$ namely those with one of the $8$ equations%
\begin{equation}
\xi_{4}=c_{1}\xi_{1}+c_{2}\xi_{2}+c_{3}\xi_{3},\quad c_{1},c_{2},c_{3}%
\in\{1,2\}.\label{8 type P_0 planes}%
\end{equation}
Similarly we see that there are, respectively, $16$, $12$, $4$ planes of kinds
$\mathcal{P}_{1}$, $\mathcal{P}_{2}$, $\mathcal{P}_{3}.$ The theorem now
follows, since planes of the same kind are seen to correspond to conjugate
$Z_{3}\times Z_{3}\times Z_{3}$ subgroups.
\end{proof}

Finally let us consider subgroups $H\cong Z_{3}\times Z_{3}$ of $\mathcal{G}%
_{81}.$ Such a subgroup $\langle A_{\rho},A_{\sigma}\rangle$ arises from a
corresponding line $\langle\langle\rho\rangle,\langle\sigma\rangle
\rangle\subset\operatorname*{PG}(3,3),$ and so we need to classify lines with
respect to the $\mathcal{G}(\mathcal{L}_{4})$-distinguished basis
$\mathcal{B}_{\varepsilon}.$ If $n_{w}$ points of a line $L\subset
\operatorname*{PG}(3,3)$ have weight $w,~w\in\{1,2,3,4\},$ with respect to the
basis $\mathcal{B}_{\varepsilon}\ $then we will say that $L$ has \emph{weight
pattern} $\pi_{\varepsilon}(L)=(n_{1},n_{2},n_{3},n_{4}).$

\begin{theorem}
\label{Thm 130 subgps}The normal subgroup $\mathcal{G}_{81}<\mathcal{G}%
(\mathcal{L}_{4})$ contains precisely $130$ subgroups $\cong Z_{3}\times
Z_{3}.$ These fall into seven conjugacy classes $\mathcal{K}_{1}%
,\ldots,\mathcal{K}_{7}$ of $\mathcal{G}(\mathcal{L}_{4}),$ of respective sizes
$6,24,16,12,16,48,8.$
\end{theorem}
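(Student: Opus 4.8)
The plan is to count lines of $\operatorname*{PG}(3,3)$ according to their position relative to the distinguished tetrahedron $\mathcal{T}_\varepsilon$, and to show that the $\mathcal{G}(\mathcal{L}_4)$-action on $\mathcal{G}_{81}$ is rich enough that each combinatorial type constitutes a single conjugacy class. First I would recall that $\operatorname*{PG}(3,3)$ has $(3^4-1)(3^3-1)/((3^2-1)(3-1)) = 40\cdot 13/ ( ?)$ — more simply, the number of lines is $\binom{4}{2}_3 = 130$. So the count $130$ is immediate; the work is the refinement into seven classes. The group acting is the image of $\mathcal{G}(\mathcal{L}_4)$ in $\operatorname*{PGL}(4,3)$ (acting by conjugation on $\mathcal{G}_{81}$); by the discussion preceding Theorem~\ref{Thm 40 subgps} this image contains at least the monomial subgroup generated by the permutations of the coordinate axes $\varepsilon_1,\dots,\varepsilon_4$ (from $\operatorname*{Sym}(4)$) together with the sign changes $\xi_r \mapsto -\xi_r$ on individual coordinates (from the $\operatorname*{GL}(V_h)$ factors of $\mathcal{N}$, since an order-$2$ element of $\operatorname*{GL}(V_h)$ inverts $\zeta_h$). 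Thus the acting group contains the full monomial group $(Z_2)^4 \rtimes \operatorname*{Sym}(4)$ of order $384$ acting on $(\mathbb{F}_3)^4$.

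Next I would set up the weight-pattern invariant. A line $L\subset\operatorname*{PG}(3,3)$ has $4$ points; each point $\langle\xi\rangle$ has a well-defined weight $\operatorname{wt}_\varepsilon(\xi)\in\{1,2,3,4\}$ (the coordinate pattern $\pm$ being irrelevant for the projective point, only the support matters). So $\pi_\varepsilon(L)=(n_1,n_2,n_3,n_4)$ with $n_1+n_2+n_3+n_4=4$; this is manifestly invariant under the monomial group, hence under $\mathcal{G}(\mathcal{L}_4)$-conjugacy. The combinatorial heart of the argument is then: (a) enumerate which patterns $(n_1,n_2,n_3,n_4)$ actually occur and how many lines realize each, and (b) verify that for each occurring pattern the monomial group is transitive on the set of lines with that pattern. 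For (a) I would organize by the number $r=n_1$ of vertices of $\mathcal{T}_\varepsilon$ on $L$: if $r=2$ the line is an edge of the tetrahedron, giving $\binom42=6$ lines with pattern $(2,0,0,1)$ — note the third point on an edge $\langle\varepsilon_i\rangle\langle\varepsilon_j\rangle$ is $\langle\varepsilon_i\pm\varepsilon_j\rangle$, and over $\mathbb{F}_3$ a line has $4$ points so there are two such, of weight $2$, giving... wait, that is pattern $(2,2,0,0)$ — I would recount carefully: an edge contains $2$ vertices and $2$ further points each of weight $2$. If $r=1$ the line meets exactly one vertex, say $\langle\varepsilon_4\rangle$, and projects to a point of the opposite face $\operatorname*{PG}(2,3)$ (coordinates $\xi_1\xi_2\xi_30$); the weights of the other three points are governed by the weight of that face-point plus bookkeeping from the $\varepsilon_4$-direction. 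If $r=0$ the line is disjoint from all vertices — these include the $8$ "planes-of-type-$\mathcal{P}_0$"-adjacent lines but more relevantly lines transversal to the tetrahedron; here I would split by whether $L$ meets the six edges or the four faces and how.

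The bulk of the calculation is (a): systematically listing, for $r\in\{0,1,2\}$, the sub-types and their cardinalities, checking they sum to $130$ and that the distinct patterns number exactly seven with sizes $6,24,16,12,16,48,8$. I would tabulate this exactly as the paper tabulated the $40$ planes in the proof of Theorem~\ref{Thm 40 subgps}, likely with a short table listing pattern, representative line, and count. For (b), transitivity of the monomial group on each pattern-class: within a fixed support-structure the coordinate permutations move the "active" axes around and the sign changes adjust the $\pm$ signs of coordinates, so one checks case-by-case that any two lines with the same $\pi_\varepsilon$ are monomially equivalent — this is routine but must be done for each of the seven types, and is where a subtle coincidence could hide (two lines with the same weight pattern but genuinely inequivalent, which would split a class). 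The main obstacle I anticipate is precisely ruling that out: establishing that $\pi_\varepsilon(L)$ is a \emph{complete} invariant for the monomial group, not merely an invariant. If the weight pattern turned out not to separate orbits, one would need a finer invariant (e.g. the multiset of pairwise "intersection data" of $L$ with the $6$ edges and $4$ faces of $\mathcal{T}_\varepsilon$), but given that the claimed number of classes is seven and there are exactly seven occurring patterns, I expect the pattern to suffice and the verification to be a finite check.
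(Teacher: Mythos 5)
Your proposal follows essentially the same route as the paper: both identify the $Z_3\times Z_3$ subgroups with the $130$ lines of $\operatorname{PG}(3,3)$, sort them by weight pattern relative to the tetrahedron $\mathcal{T}_{\varepsilon}$ into the seven kinds of table (\ref{Table 130 lines}), and conclude by observing that lines of the same kind yield conjugate subgroups. Your explicit identification of the acting group as the monomial group $(Z_2)^4\rtimes\operatorname{Sym}(4)$, and your insistence on checking that the weight pattern is a complete (not merely an) invariant, only make precise the step the paper compresses into ``lines of the same kind correspond to conjugate $Z_3\times Z_3$ subgroups.''
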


\begin{proof}
Each subgroup $\langle A_{\rho},A_{\sigma}\rangle\cong Z_{3}\times Z_{3}$
arises from a corresponding line $\langle\langle\rho\rangle,\langle
\sigma\rangle\rangle\subset\operatorname*{PG}(3,3),$ and there exist precisely
$130$ lines in $\operatorname*{PG}(3,3).$ With respect to the $\mathcal{G}%
(\mathcal{L}_{4})$-distinguished tetrahedron of reference $\mathcal{T}%
_{\varepsilon}$ the $130$ lines $L$ are of seven kinds $\Lambda_{1}%
,\ldots,\Lambda_{7}$ as described in the following table.
\begin{equation}%
\begin{tabular}
[c]{ccc}\hline
& $\pi_{e}(L)$ & $|\Lambda_{i}|$\\\hline
$L\in\Lambda_{1}$ & ${ (2,2,0,0)}$ & $6$\\
$L\in\Lambda_{2}$ & ${ (1,1,2,0)}$ & $24$\\
$L\in\Lambda_{3}$ & ${ (0,3,1,0)}$ & $16$\\
$L\in\Lambda_{4}$ & ${ (0,2,0,2)}$ & $12$\\
$L\in\Lambda_{5}$ & ${ (1,0,1,2)}$ & $16$\\
$L\in\Lambda_{6}$ & ${ (0,1,2,1)}$ & $48$\\
$L\in\Lambda_{7}$ & ${ (0,0,4,0)}$ & $8$\\\hline
\end{tabular}
\label{Table 130 lines}%
\end{equation}
The theorem now follows since lines of the same kind correspond to conjugate
$Z_{3}\times
Z_{3}$ subgroups.%
\end{proof}

\subsection{The 27-set `denizens' of $\omega_{4}$}

It follows from Theorem \ref{Thm 40 subgps} that the $81$-set $\omega_{4}$ is
populated by $40$ triplets $\{\mathcal{R},\mathcal{R}^{\prime},\mathcal{R}%
^{\prime\prime}\}$ of $27$-set `denizens', as in (\ref{R, R', R''}), and that
these triplets, and the $120$ denizens of $\omega_{4}$, can be classified into
four kinds $\mathcal{C}_{0},\mathcal{C}_{1},\mathcal{C}_{2},\mathcal{C}_{3}.$
One of our aims is to show that \emph{precisely eight of these triplets are
triplets of Segre varieties $\mathcal{S}_{3}(2).$ }So it helps to remind
ourselves at this point about certain aspects of a Segre variety
$\mathcal{S}=\mathcal{S}_{3}(2)$ in $\operatorname*{PG}(7,2),$ and to relate
our present concerns to those in \cite{Glynn et al},
\cite{HavlicekOdehnalSaniga} and \cite{AspectsSegreinDCC}.

First of all, $\mathcal{S}$ defines a $(27_{3},27_{3})$ configuration, each of
the $27$ points of $\mathcal{S}$ lying on \emph{precisely }$3$ lines
$\subset\mathcal{S},$ namely three of the $27$ generators of $\mathcal{S}.$
Moreover the stabilizer group $\mathcal{G}_{\mathcal{S}}$ of $\mathcal{S}$
contains as a normal subgroup a group $\langle A_{1},A_{2},A_{3}\rangle\cong
Z_{3}\times Z_{3}\times Z_{3}$ which acts transitively on the $27$ points of
$\mathcal{S},$ the three generators of $\mathcal{S}$ through a point
$p\in\mathcal{S}$ being the lines
\begin{equation}
L^{r}(p):=\{p,A_{r}p,(A_{r})^{2}p\},~r=1,2,3.
\label{3 generators thro a point of S}%
\end{equation}
Here $A_{r}$ satisfies $(A_{r})^{2}+A_{r}+I=0,$ each $Z_{3}$ group $\langle
A_{r}\rangle$ acting fixed-point-free on $\operatorname*{PG}(7,2).$ Further, as
noted in \cite[Theorem 5]{AspectsSegreinDCC}, $\mathcal{S}$ determines a
distinguished $Z_{3}$-subgroup $\langle W\rangle$ which also acts
fixed-point-free on $\operatorname*{PG}(7,2),$ the distinguished tangent, see
\cite[Section 2.1]{AspectsSegreinDCC}, at $p\in\mathcal{S}$ being the line
$\{p,Wp,W^{2}p\}.$ Moreover, see \cite[Section 4.2]{AspectsSegreinDCC}, under
the action of the distinguished $Z_{3}$-subgroup $\langle W\rangle$ the Segre
variety $\mathcal{S}$ gave rise to a triplet $\{\mathcal{S},~\mathcal{S}%
^{\prime}=W(\mathcal{S}),~\mathcal{S}^{\prime\prime}=W^{2}(\mathcal{S})\}$ of
Segre varieties. In \cite[p.~82]{Glynn et al} (although without proof and using
a different notation), \cite[Proposition 5]{HavlicekOdehnalSaniga} and
\cite[Section 4.1]{AspectsSegreinDCC} the five
$\mathcal{G}_{\mathcal{S}}$-orbits $\mathcal{O}_{1},\mathcal{O}_{2}%
,\mathcal{O}_{3},\mathcal{O}_{4},\mathcal{O}_{5}$ of points were described,
with $\mathcal{O}_{5}=\mathcal{S}$ and $\mathcal{O}_{4}=\mathcal{S}^{\prime
}\cup\mathcal{S}^{\prime\prime}.$  These are related to the four $\mathcal{G}%
(\mathcal{L}_{4})$-orbits (\ref{4 orbits omega 1,2,3,4}) in the following
simple manner:%
\begin{equation}
\mathcal{\omega}_{1}=\mathcal{O}_{1},\quad\mathcal{\omega}_{2}=\mathcal{O}%
_{2},\quad\mathcal{\omega}_{3}=\mathcal{O}_{3},\quad\mathcal{\omega}%
_{4}=\mathcal{O}_{4}\cup\mathcal{O}_{5}=\mathcal{S}\cup\mathcal{S}^{\prime
}\cup\mathcal{S}^{\prime\prime}. \label{omega orbits and O orbits}%
\end{equation}
So $\omega_{4}$ is a single orbit under the action of the group $\langle
A_{1},A_{2},A_{3},W\rangle\cong(Z_{3})^{4},$ this last thus being the group
$\mathcal{G}_{81}$ in our present context.

\begin{lemma}\label{lines in omega4}
If $p\in\omega_{4}$ and $\lambda\in(\mathbb{F}_{3})^{4},\lambda\neq0000,$ then
$L_{p}^{\lambda}:=\{p,A_{\lambda}p,A_{2\lambda}p\}$ is a line in $\omega_{4}$
if and only if $\pm\lambda\in\Xi\cup\Xi^{\ast}.$
\end{lemma}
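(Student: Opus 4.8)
The statement asserts that among all $40$ nonzero
directions $\lambda\in\mathbb{P}V(4,3)$, only the eight directions
$\langle\rho\rangle$ with $\rho\in\Xi\cup\Xi^{\ast}$ produce a line
$L_p^\lambda$ contained in $\omega_4$. Recall from the $\operatorname{GF}(3)$
dictionary established above that $\omega_4$ is the image of $\theta_u$, and
$A_\lambda u = p_\lambda$ with $\theta_u(ijkl)=U_{ijkl}$; thus for the unit
point $p=u$ we have $L_u^\lambda=\{U_{0000},U_\lambda,U_{2\lambda}\}$, which
is automatically a subset of $\omega_4$ for every $\lambda$ with
$\operatorname{wt}_\varepsilon(\lambda)=4$. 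The content of the lemma is
therefore really two things: (a) for $\lambda$ with $\operatorname{wt}
_\varepsilon(\lambda)=4$, the set $L_u^\lambda$ is not merely three points of
$\omega_4$ but is actually \emph{collinear} in $\operatorname{PG}(7,2)$, and
these eight directions are exactly $\Xi\cup\Xi^\ast$ by the weight criterion
noted just before Theorem \ref{Thm 40 subgps}; and (b) for every other
$\lambda$, the three points $p,A_\lambda p,A_{2\lambda}p$ fail to be collinear
(equivalently, $p+A_\lambda p+A_{2\lambda}p\neq 0$).

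The plan is as follows. First I would reduce to $p=u$: since $\mathcal{G}_{81}$
acts transitively on $\omega_4$ and, for $g=A_\mu\in\mathcal{G}_{81}$, one has
$g(L_u^\lambda)=\{gu,A_\lambda gu,A_{2\lambda}gu\}=L_{gu}^\lambda$ (because
$\mathcal{G}_{81}$ is abelian, so $A_\mu$ commutes with $A_\lambda$), the
property ``$L_p^\lambda$ is a line in $\omega_4$'' is independent of the choice
of $p\in\omega_4$. Hence it suffices to decide, for each of the $40$
directions $\langle\lambda\rangle$, whether $u+A_\lambda u+A_{2\lambda}u=0$ in
$V_8$. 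Next, using $U_{ijkl}=\theta_u(ijkl)$ and the isomorphism
$A_\sigma A_\tau=A_{\sigma+\tau}$, this vector equals
$U_{0000}+U_\lambda+U_{2\lambda}$. Writing $\lambda=ijkl$ and
$2\lambda=i'j'k'l'$ (arithmetic in $\mathbb{F}_3$), the $V_h$-component of
this sum is $u_h(0)+u_h(\lambda_h)+u_h(2\lambda_h)$. Because the three
elements $u_h(0),u_h(1),u_h(2)$ are exactly the three nonzero vectors of the
$2$-dimensional space $V_h$ (see (\ref{points of L_h})), they sum to $0$;
hence the $V_h$-component vanishes precisely when $\{0,\lambda_h,2\lambda_h\}
=\{0,1,2\}$, i.e.\ when $\lambda_h\neq 0$, and otherwise (when $\lambda_h=0$)
the component is $u_h(0)+u_h(0)+u_h(0)=u_h(0)\neq 0$.

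Putting these componentwise observations together: $u+A_\lambda u+A_{2\lambda}
u=0$ if and only if all four coordinates $\lambda_a,\lambda_b,\lambda_c,
\lambda_d$ are nonzero, that is, if and only if $\operatorname{wt}_\varepsilon
(\lambda)=4$. As recorded immediately before Theorem \ref{Thm 40 subgps}, the
$Z_3$-subgroups $\langle A_\rho\rangle$ with $\operatorname{wt}_\varepsilon
(\rho)=4$ are precisely the eight subgroups $\{\langle A_\rho\rangle\}_{\rho\in
\Xi\cup\Xi^\ast}$; in terms of directions this says $\langle\lambda\rangle$
has $\operatorname{wt}_\varepsilon$-weight $4$ exactly when $\pm\lambda\in
\Xi\cup\Xi^\ast$. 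Finally, when $\operatorname{wt}_\varepsilon(\lambda)=4$ the
three points $p,A_\lambda p,A_{2\lambda}p$ are distinct (each
$\langle A_\lambda\rangle$ acts fixed-point-free on $\operatorname{PG}(7,2)$
since $A_\lambda^2+A_\lambda+I=0$) and sum to $0$, hence are collinear, and all
three lie in $\omega_4$ by construction; so $L_p^\lambda$ is a line in
$\omega_4$. Conversely, when $\operatorname{wt}_\varepsilon(\lambda)<4$ the
sum is nonzero, so the three distinct points are not collinear and $L_p^\lambda$
is not a line at all. This establishes the ``if and only if''.

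I do not anticipate a genuine obstacle here: the only step requiring a little
care is the reduction to $p=u$, where one must invoke that $\mathcal{G}_{81}$
is abelian (established when $\mathcal{G}_{81}\cong(Z_3)^4$ was introduced) so
that conjugation by $A_\mu$ carries $L_u^\lambda$ to $L_{A_\mu u}^\lambda$
rather than to a line in a translated direction. Everything else is the
coordinate bookkeeping of the map $\theta_u$ together with the single
elementary fact that the three nonzero vectors of a $2$-dimensional
$\mathbb{F}_2$-space sum to zero.
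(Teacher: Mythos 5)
Your proof is correct and follows essentially the same route as the paper: the decisive step in both is that the $V_h$-component of $(I+A_{\lambda}+A_{2\lambda})p$ vanishes precisely when $\lambda_h\neq 0$ (the three nonzero vectors of the $2$-dimensional space $V_h$ summing to zero), so collinearity holds iff $\operatorname{wt}_{\varepsilon}(\lambda)=4$, i.e.\ iff $\pm\lambda\in\Xi\cup\Xi^{\ast}$. The paper merely cites the earlier spread construction for the ``if'' direction instead of re-running this computation, and omits your (valid but unnecessary) reduction to $p=u$.
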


\begin{proof}
We already know, see (\ref{p lies on line Lijk}), that $L_{p}^{\lambda}$ is a
line if $\lambda\in\Xi\cup\Xi^{\ast}$ or if $-\lambda\in\Xi\cup\Xi^{\ast}.$
Also, as noted after equation (\ref{XiXi* alpha ... delta*}), if $\pm
\lambda\notin$ $\Xi\cup\Xi^{\ast}$ then $\operatorname{wt}_{\varepsilon
}(\lambda)<4,$ and so $\lambda=mnrs$ where at least one of $m,n,r,s$ is $0.$
For example, suppose $\lambda=mnr0,$ where $m,n,r\in\mathbb{F}_{3}.$ Then
$(I+A_{\lambda}+A_{2\lambda})U_{ijkl}=U_{\emptyset\emptyset\emptyset l}\neq0.$
\end{proof}

\begin{remark}\label{partial_affine}
A \emph{partial affine space}, see \cite[p.~35]{Blunck Herzer} or
\cite[p.~794]{Herzer}, is an affine space from which some parallel classes have
been removed. For example, the affine space on $(\mathbb{F}_{3})^{4}$ turns
into a partial affine space if we consider only affine lines with a direction
vector $\pm\lambda\in\Xi\cup\Xi^{\ast}$ and restrict the parallelism of
$(\mathbb{F}_{3})^{4}$ to the set of those lines. Lemma~\ref{lines in omega4}
shows that $\omega_{4}$ arises as the point set of an isomorphic partial affine
space in the following way: The \emph{lines} in $\omega_4$ are of the form
$L_{p}^{\lambda}$ with $\pm\lambda\in\Xi\cup\Xi^{\ast}$. Two lines are
\emph{parallel} if they belong to the same distinguished spread
$\mathcal{L}_{85}^{ijk}$.
\end{remark}

\begin{theorem}
\label{Thm Only P_0 gives Segres}A triplet of $27$-sets $\{\mathcal{R}%
_{H},\mathcal{R}_{H}^{\prime},\mathcal{R}_{H}^{\prime\prime}\}$ in
(\ref{R, R', R''}) which arises from a $(Z_{3})^{3}$ subgroup $H=\{A_{\lambda
}|~\lambda\in V_{3}\}$ will consist of Segre varieties $\mathcal{S}_{3}(2)$ if
and only if the projective plane $P=\mathbb{P}V_{3}\subset\operatorname*{PG}%
(3,3)$ is of kind $\mathcal{P}_{0}.$ So the $81$-set $\omega_{4}$ contains
precisely $24$ copies of a Segre variety $S_{3}(2).$
\end{theorem}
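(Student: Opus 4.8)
The plan is to characterise, for a $(Z_3)^3$ subgroup $H=\{A_\lambda\mid\lambda\in V_3\}$ with $V_3\subset V(4,3)$ a $3$-dimensional subspace, when the orbit $\mathcal{R}_H=Hu$ is a Segre variety $\mathcal{S}_3(2)$. The key structural fact I would exploit is the partial-affine-space description of $\omega_4$ from Remark~\ref{partial_affine} together with Lemma~\ref{lines in omega4}: the lines of $\operatorname*{PG}(7,2)$ contained in $\omega_4$ through a point $p$ are exactly the $L_p^\lambda$ with $\pm\lambda\in\Xi\cup\Xi^\ast$, i.e. $\operatorname{wt}_\varepsilon(\lambda)=4$. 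A $27$-set $\mathcal{R}_H$ sitting inside $\omega_4$ will be a Segre variety $\mathcal{S}_3(2)$ precisely when it carries the right incidence structure, namely a $(27_3,27_3)$ configuration of points and lines with three generators through each point (as recalled just before Lemma~\ref{lines in omega4}); so the first step is to translate "$\mathcal{R}_H$ is a Segre $\mathcal{S}_3(2)$" into "through the point $u\in\mathcal{R}_H$ there pass exactly three lines of $\operatorname*{PG}(7,2)$ that lie inside $\mathcal{R}_H$, these being pairwise skew and the induced structure being $AG(3,3)$-like with the three parallel classes".

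Second, I would compute which lines of $\omega_4$ lie inside $\mathcal{R}_H$. A line $L_u^\lambda\subset\omega_4$ (so $\pm\lambda\in\Xi\cup\Xi^\ast$) has all three of its points $u, A_\lambda u, A_{2\lambda}u$ in $\mathcal{R}_H=\{A_\mu u\mid \mu\in V_3\}$ iff $\lambda\in V_3$ (using that $\theta_u$ is a bijection and $A$ an isomorphism $(\mathbb{F}_3)^4\to\mathcal{G}_{81}$). Hence the lines of $\operatorname*{PG}(7,2)$ contained in $\mathcal{R}_H$ and passing through $u$ correspond bijectively to the points $\langle\lambda\rangle$ of the projective plane $P=\mathbb{P}V_3$ with $\operatorname{wt}_\varepsilon(\lambda)=4$, i.e. to $P\cap\{\langle\rho\rangle:\rho\in\Xi\cup\Xi^\ast\}$. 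By homogeneity of $H$'s action, the same count holds at every point of $\mathcal{R}_H$. Now I invoke Theorem~\ref{Thm 40 subgps} (and its proof): $P$ is one of the $40$ planes of $\operatorname*{PG}(3,3)$, classified by how many vertices $\langle\varepsilon_i\rangle$ it contains into kinds $\mathcal{P}_0,\mathcal{P}_1,\mathcal{P}_2,\mathcal{P}_3$. So the crux becomes a finite combinatorial check: for a plane $P$ of each kind, how many of its $13$ points have $\varepsilon$-weight $4$?

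Third, I carry out that weight count. For $\mathcal{P}_0$ planes, with equation $\xi_4=c_1\xi_1+c_2\xi_2+c_3\xi_3$, $c_i\in\{1,2\}$ as in (\ref{8 type P_0 planes}), one checks directly that exactly three of the thirteen points are weight-$4$ (and that these three give three mutually skew lines through $u$, and that $\mathcal{R}_H$ with these lines and the parallelism of Remark~\ref{partial_affine} is an $AG(2,3)$-bundle forming a genuine $\mathcal{S}_3(2)$); here one can cross-check against the known eight triplets in $\boldsymbol{Z}\cup\boldsymbol{Z}^\ast$ and against the sibling-triplet description in (\ref{omega orbits and O orbits}), since each $\mathcal{P}_0$ plane is spanned by three of the weight-$4$ vectors $\alpha,\beta,\gamma,\delta,\alpha^\ast,\ldots$. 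For planes of kind $\mathcal{P}_1,\mathcal{P}_2,\mathcal{P}_3$ one shows the number of weight-$4$ points is never exactly $3$ in the way needed (a plane through a vertex $\langle\varepsilon_i\rangle$ forces fewer — or a wrongly-configured set of — full-weight points), so $\mathcal{R}_H$ fails to have the $(27_3,27_3)$ incidence pattern of a Segre $\mathcal{S}_3(2)$; hence only the $8$ planes of kind $\mathcal{P}_0$ work. Since each such plane yields a triplet of three $27$-sets, the $81$-set $\omega_4$ contains $8\times 3=24$ copies of $\mathcal{S}_3(2)$, giving the stated count.

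The main obstacle I anticipate is not the weight enumeration (that is routine finite checking over $\operatorname*{PG}(3,3)$) but the "if" direction done honestly: showing that when $P$ is of kind $\mathcal{P}_0$ the $27$-set $\mathcal{R}_H$, equipped with its three skew generating lines through each point, really is a Segre variety $\mathcal{S}_3(2)$ and not merely an abstract $(27_3,27_3)$ configuration embedded in $\operatorname*{PG}(7,2)$. To close this I would either (i) exhibit an explicit $A_1,A_2,A_3,W$ from the ambient $\boldsymbol{Z}\cup\boldsymbol{Z}^\ast$ data realising the setup of (\ref{3 generators thro a point of S}) and (\ref{omega orbits and O orbits}) — which identifies $\mathcal{R}_H$ with one of the $\mathcal{S},\mathcal{S}',\mathcal{S}''$ already known to be Segre varieties — or (ii) appeal to the uniqueness (up to $\operatorname*{GL}(8,2)$) of the $(27_3,27_3)$-configuration spanning $\operatorname*{PG}(7,2)$ with three pairwise-skew generators per point and the correct span/rank, identifying it as $\mathcal{S}_3(2)$. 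Route (i) is cleaner and ties the result directly to the $\mathcal{P}_0$ description, so that is the one I would pursue.
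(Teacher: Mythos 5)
Your proposal follows essentially the same route as the paper: use Lemma~\ref{lines in omega4} to see that the generators of a putative Segre through $u$ must be the lines $L_u^\lambda$ with $\langle\lambda\rangle\in P$ and $\operatorname{wt}_\varepsilon(\lambda)=4$, count such points per plane kind (the paper records the counts as $3,2,4,0$ for $\mathcal{P}_0,\mathcal{P}_1,\mathcal{P}_2,\mathcal{P}_3$ --- note $\mathcal{P}_2$ gives \emph{four}, not fewer, but still $\neq 3$), and settle the ``if'' direction exactly by your route (i), i.e.\ by identifying one $\mathcal{C}_0$-orbit with the known triplet $\{\mathcal{S},\mathcal{S}',\mathcal{S}''\}$ of (\ref{omega orbits and O orbits}) and transporting to the other seven by the conjugacy of Theorem~\ref{Thm 40 subgps}. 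This matches the paper's proof in all essentials.
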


\begin{proof}
Since each point of a Segre $\mathcal{S}=$ $\mathcal{S}_{3}(2)$ lies on
\emph{precisely }three generators of $\mathcal{S},$ see (\ref{3 generators thro
a point of S}), it follows from the preceding lemma that in order for
$\mathcal{R}_{H}$ to be a $\mathcal{S}_{3}(2)$ the subgroup $H$ must be of the
form $\langle A_{\lambda},A_{\mu},A_{\nu}\rangle$ for \emph{precisely }three
element $\lambda,\mu,\nu\in\Xi\cup\Xi^{\ast}.$ But a
straightforward check shows that planes of the kinds $\mathcal{P}%
_{0},\mathcal{P}_{1},\mathcal{P}_{2},\mathcal{P}_{3}$ contain, respectively,
precisely $3,2,4,0$ points $\langle\lambda\rangle$ with $\lambda\in\Xi\cup
\Xi^{\ast}.$ So only in the eight cases where the $Z_{3}$ subgroup
$H<\mathcal{G}_{81}$ is of kind $\mathcal{C}_{0}$ can $\mathcal{R}_{H}$ be
Segre variety $S_{3}(2).$ By (\ref{omega orbits and O orbits}) for one such
subgroup a Segre variety arises and, by Theorem~\ref{Thm 40 subgps}, the same
holds for the remaining subgroups of kind $\mathcal{C}_{0}$.
\end{proof}

\begin{remark}
\label{Rmk all Xi or all Xi* for kind P_0}It is easy to check that if all
three of $\lambda,\mu,\nu$ are in $\Xi,$ or if all three are in $\Xi^{\ast},$
then $\mathbb{P}({\prec}\lambda,\mu,\nu{\succ})$ is a plane in $\operatorname*{PG}%
(3,3)$ of kind $\mathcal{P}_{0},$ thus accounting for all eight planes of kind
$\mathcal{P}_{0}.$ But if $\lambda,\mu,\nu$ are split $2,1$ or $1,2$ between
$\Xi,\Xi^{\ast},$ then we see that $H=\langle A_{\lambda},A_{\mu},A_{\nu
}\rangle$ contains a fourth $Z_{3}$ subgroup $\langle A_{\rho}\rangle$ with
$\rho\in\Xi\cup\Xi^{\ast}:$ for example, observe results such as
\begin{equation}
{\prec}\alpha,\beta,\alpha^{\ast}{\succ}={\prec}\alpha,\beta,\alpha^{\ast}%
,\beta^{\ast}{\succ},\quad\text{and\quad}{\prec}\alpha,\beta,\gamma^{\ast}%
{\succ}={\prec}\alpha,\beta,\gamma^{\ast},\delta^{\ast}{\succ}.
\label{a,b,a* and a,b,g*}%
\end{equation}
So such a $(Z_{3})^{3}$ subgroup $H$ is of kind $\mathcal{C}_{2},$ not
$\mathcal{C}_{0},$ and $\mathcal{R}_{H}$ gives rise to a $(27_{4},36_{3})$
configuration in contrast to the $(27_{3},27_{3})$ configuration arising from
a $S_{3}(2).$
\end{remark}

It was proved in \cite[Theorem 18]{AspectsSegreinDCC} that a Segre variety
$S_{3}(2)$ in $\operatorname*{PG}(7,2)$ has a sextic equation. In fact, from
our results in Section \ref{SSec Invt Polys} we can deduce that if
$\mathcal{S}$ is any of the $24$ copies of a Segre variety $S_{3}(2)$ in
$\omega_{4}$ then $\mathcal{S}$ has a sextic equation of the form
$Q_{\mathcal{S}}(x)=0$ where, for some polynomial $F_{\mathcal{S}}$ of degree
$<6,$%
\begin{equation}
Q_{\mathcal{S}}=\widehat{x_{1}x_{8}}+\widehat{x_{2}x_{7}}+\widehat{x_{3}x_{6}%
}+\widehat{x_{4}x_{5}}+F_{\mathcal{S}}. \label{Q_S =}%
\end{equation}
For recall from (\ref{omega orbits and O orbits}) that $\mathcal{\omega}%
_{4}=\mathcal{O}_{4}\cup\mathcal{O}_{5}$ where $\mathcal{O}_{5}=\mathcal{S}.$
Now, see \cite[Theorem 17]{AspectsSegreinDCC}, the $201$-set $(\mathcal{O}%
_{4})^{\text{c}}:=\mathcal{\omega}_{1}\cup\mathcal{\omega}_{2}\cup
\mathcal{\omega}_{3}\cup\mathcal{S}$ has a quartic equation, say
$F_{\mathcal{S}}^{\prime}=0,$ and, see Theorem~\ref{Thm sextic for omega_4}, $\mathcal{O}%
_{4}\cup\mathcal{S}$ has sextic equation $Q_{6}+Q_{4}+Q_{2}=0.$ It follows
that $\mathcal{S}$ has equation $Q_{6}+Q_{4}+Q_{2}+F_{\mathcal{S}}^{\prime}=0$
which, see (\ref{Q_6 using ^monomials}), is of the form (\ref{Q_S =}).

\subsection{Non-Segre triplets in $\omega_{4}$}

When the authors first considered $27$-sets such as $\mathcal{R}_{\alpha
\beta\gamma^{\ast}}=\{A_{\rho}u\}_{\rho\in{\prec}\alpha,\beta,\gamma^{\ast}%
{\succ}}$ they were briefly misled into thinking that $\mathcal{R}_{\alpha
\beta\gamma^{\ast}}$ was a Segre $\mathcal{S}_{3}(2).$ Upon discovering their
error, see (\ref{a,b,a* and a,b,g*}), they decided that all the `deceitful'
non-Segre $27$-sets in $\omega_{4}$ should be termed \emph{`rogues'.} In this
terminology we can summarize our foregoing results as follows.

\begin{quotation}
The $81$-set $\omega_{4}$ is populated by $120$ denizen $27$-sets which occur
as $40$ triplets $\{\mathcal{R},\mathcal{R}^{\prime},\mathcal{R}^{\prime\prime
}\}$ such that $\mathcal{R}\cup\mathcal{R}^{\prime}\cup\mathcal{R}%
^{\prime\prime}=\omega_{4}.$ Of these triplets eight are of Segre varieties
$\mathcal{S}_{3}(2),$ sixteen are of rogues of kind $\mathcal{C}_{1},$ twelve
are of rogues of kind $\mathcal{C}_{2}$ and four are of rogues of kind
$\mathcal{C}_{3}.$
\end{quotation}

Since a Segre variety $\mathcal{S}_{3}(2)$ spans $\operatorname*{PG}(7,2),$
the next two theorems confirm in a more vivid manner that rogues of kinds
$\mathcal{C}_{2}$ and $\mathcal{C}_{3}$ are not Segre varieties.

\begin{theorem}
Suppose that a $27$-set $\mathcal{R\subset\omega}_{4}$ is a rogue of kind
$\mathcal{C}_{2}.$ Then $\langle\mathcal{R}\rangle$ is a $5$-flat in
$\operatorname*{PG}(7,2).$
\end{theorem}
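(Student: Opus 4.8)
The plan is to exploit the $\operatorname{GF}(3)$ dictionary set up in Section~\ref{SSec GF(3) view of G_81}. A rogue $\mathcal{R}$ of kind $\mathcal{C}_2$ is $\mathcal{R}_H$ for a $(Z_3)^3$ subgroup $H=\{A_\lambda\mid\lambda\in V_3\}$ whose associated plane $P=\mathbb{P}V_3\subset\operatorname*{PG}(3,3)$ is of kind $\mathcal{P}_2$, i.e.\ contains exactly two vertices of the tetrahedron $\mathcal{T}_\varepsilon$. Up to the action of $\mathcal{G}(\mathcal{L}_4)$ I may assume these two vertices are $\langle\varepsilon_1\rangle$ and $\langle\varepsilon_2\rangle$, so $V_3={\prec}\varepsilon_1,\varepsilon_2,w{\succ}$ for some $w=0\,0\,r\,s$ with $r,s\in\{1,2\}$; after a further coordinate change I may normalize to $w=\varepsilon_3+\varepsilon_4=0\,0\,1\,1$, so $V_3=\{ij(k)(k)\mid i,j,k\in\mathbb{F}_3\}$. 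Then $\mathcal{R}_H=\{\theta_u(ij kk)\mid i,j,k\in\mathbb{F}_3\}=\{U_{ijkk}\mid i,j,k\in\mathbb{F}_3\}$ by the identity $\theta_u(ijkl)=U_{ijkl}$.

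Next I would compute the span $\langle\mathcal{R}_H\rangle$ directly from the explicit vectors $U_{ijkk}=u_a(i)\oplus u_b(j)\oplus u_c(k)\oplus u_d(k)$. The key observation is that the $c$- and $d$-components are locked together: every vector of $\mathcal{R}_H$ lies in $V_a\oplus V_b\oplus D$, where $D:=\{u_c(k)\oplus u_d(k)\mid k\in\mathbb{F}_3\}$ is a $2$-dimensional subspace of $V_c\oplus V_d$ (it is a line of $\operatorname*{PG}(7,2)$, indeed one of the $45$ lines on $\langle L_c,L_d\rangle$ transversal to $L_c$ and $L_d$). Hence $\langle\mathcal{R}_H\rangle\subseteq V_a\oplus V_b\oplus D$, a $6$-dimensional space, i.e.\ a $5$-flat. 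For the reverse inclusion I exhibit a spanning set: $U_{1000},U_{2000}\in\mathcal{R}_H$ span $V_a$ (using $u_a(1)+u_a(2)=u_a(0)$ wait—over $\mathbb{F}_2$ the relevant fact is simply that any two of $u_a(0),u_a(1),u_a(2)$ span $V_a$); similarly $U_{0100},U_{0200}$ span $V_b$; and $U_{0011},U_{0022}$ span $D$. Since all six of these vectors lie in $\mathcal{R}_H$ (taking $i$ or $j$ or $k$ zero as appropriate), they generate $V_a\oplus V_b\oplus D$. Therefore $\langle\mathcal{R}_H\rangle=V_a\oplus V_b\oplus D$ is exactly a $5$-flat.

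I anticipate two points needing care rather than genuine obstacles. First, the reduction to the normalized plane $V_3={\prec}\varepsilon_1,\varepsilon_2,\varepsilon_3+\varepsilon_4{\succ}$ must be justified by Theorem~\ref{Thm 40 subgps}: all planes of kind $\mathcal{P}_2$ are $\mathcal{G}(\mathcal{L}_4)$-conjugate, and $\mathcal{G}(\mathcal{L}_4)$ acts on $\operatorname*{PG}(3,3)$ through its action on $\mathcal{B}_\varepsilon$ permitting both the coordinate permutation $(cd)$-type moves and the rescaling that sends a general $0\,0\,r\,s$ to $0\,0\,1\,1$ — here one uses that $\mathcal{N}$ contains elements acting on each $V_h$ by the full $\operatorname*{GL}(V_h)\cong S_3$, hence inducing on $\langle\varepsilon_h\rangle$-coordinate the scalings of $\mathbb{F}_3^\times$; I should remark that since $\langle\mathcal{R}_H\rangle$ is carried to $\langle\mathcal{R}_{H'}\rangle$ by the group element, being a $5$-flat is conjugation-invariant, so the normalization loses nothing. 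Second, I should double-check via Remark~\ref{Rmk all Xi or all Xi* for kind P_0} that this $H$ is genuinely of kind $\mathcal{C}_2$ and not $\mathcal{C}_0$: indeed the plane ${\prec}\varepsilon_1,\varepsilon_2,\varepsilon_3+\varepsilon_4{\succ}$ contains the two vertices $\langle\varepsilon_1\rangle,\langle\varepsilon_2\rangle$, so it is of kind $\mathcal{P}_2$, consistent with the hypothesis. The essential content is just the first paragraph's structural remark that fixing the plane to contain $\varepsilon_1,\varepsilon_2$ forces the two ``free'' coordinates $k,l$ of the points $U_{ijkl}\in\mathcal{R}_H$ to be equal, confining $\mathcal{R}_H$ to a $6$-dimensional subspace which it then spans.
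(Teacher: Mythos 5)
Your argument is correct in strategy and reaches the right conclusion, but by a genuinely different route from the paper's. Both proofs normalize to the same representative: the paper takes the plane with equation $\xi_3=\xi_4$ (and treats the second family $\xi_3=2\xi_4$ separately, rather than scaling it away inside $\operatorname{GL}(V_d)$ as you do), arriving at $\mathcal{R}=\{U_{ijkk}\}$ as in (\ref{R = Uijkk}). From there the paper works through the symplectic polarity: it exhibits the line $L_{\mathcal{R}}=\{U_{\emptyset\emptyset00},U_{\emptyset\emptyset11},U_{\emptyset\emptyset22}\}\subset\langle L_c,L_d\rangle$, checks that its points are perpendicular to every point of $\mathcal{R}$ and that no other point of $\operatorname{PG}(7,2)$ is, and concludes $\langle\mathcal{R}\rangle=(L_{\mathcal{R}})^{\perp}$, a $5$-flat. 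You instead compute the span directly: the $c$- and $d$-components are locked together, so $\mathcal{R}\subset V_a\oplus V_b\oplus D$ with $D=\{u_c(k)\oplus u_d(k)\}\cup\{0\}$, and $\mathcal{R}$ spans this $6$-dimensional subspace. The two answers coincide: your $D$ is exactly the paper's $L_{\mathcal{R}}$, which is totally isotropic and orthogonal to $V_a\oplus V_b$, whence $D^{\perp}=V_a\oplus V_b\oplus D$. Your version is more elementary (no polarity needed) and gives the upper bound on the dimension for free; the paper's version packages the answer as $L^{\perp}$ for a line $L\subset\omega_2$, which is the form used in the subsequent Remark on reguli.

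Two local slips need repair, though neither is structural. First, $U_{1000}$ and $U_{2000}$ do \emph{not} span $V_a$: each has nonzero components in all four summands, since here $i,j,k$ run over $\{0,1,2\}$ and not over $\{\emptyset,0,1,2\}$. What is true is that $U_{1000}+U_{2000}=u_a(0)$, and more generally differences of points of $\mathcal{R}$ agreeing in all but one $\mathbb{F}_3$-coordinate yield every nonzero vector of $V_a$, of $V_b$ and of $D$; since $\langle\mathcal{R}\rangle$ contains all such differences and is already known to lie in $V_a\oplus V_b\oplus D$, equality follows. This is a one-line fix, but as written the spanning step is false. Second, the aside describing $D$ as one of the ``$45$ lines on $\langle L_c,L_d\rangle$ transversal to $L_c$ and $L_d$'' is wrong: $\operatorname{PG}(3,2)$ has only $35$ lines, and $D$, all of whose points have line-weight $2$, is disjoint from both $L_c$ and $L_d$ (it is one of the six lines of the regulus pair appearing in the paper's Remark following this theorem). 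That aside plays no role in your argument, so it can simply be deleted.
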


\begin{proof}
Six of the twelve planes of kind $\mathcal{P}_{2}$ are those having equations
of the kind $\xi_{r}=\xi_{s}$ and the other six are those having equations of
the kind $\xi_{r}=2\xi_{s}.$ Consider a plane $P=\mathbb{P}V_{3}$ in the first
six, say with equation $\xi_{3}=\xi_{4}.$ Then, see (\ref{255 points}), the
subset $\theta_{u}(V_{3})$ consists of the $27$ points
\begin{equation}
\mathcal{R}:=\{U_{ijkk}|~i,j,k\in\mathbb{F}_{3}=\{0,1,2\}\}. \label{R = Uijkk}%
\end{equation}
Now any two elements $a,b$ of a line $L_{h}\in\mathcal{L}_{4}$ satisfy $a\cdot
b=1$ if $a\neq b$ and $a\cdot b=0$ if $a=b.$ So the three points of the line
\begin{equation*}
L_{\mathcal{R}}:=\{U_{\emptyset\emptyset00},U_{\emptyset\emptyset
11},U_{\emptyset\emptyset22}\}\subset\langle L_{c},L_{d}\rangle
\label{L_R = .. 00,11,22}%
\end{equation*}
are perpendicular to every point of $\mathcal{R}.$ However this is not the case
for any other point of $\operatorname*{PG}(7,2),$ and so $\langle
\mathcal{R}\rangle$ is the $5$-flat $(L_{\mathcal{R}})^{\perp}.$ A plane
$P=\mathbb{P}V_{3}$ in the second six, say with equation $\xi_{3}=2\xi
_{4}=-\xi_{4},$ can be treated similarly, with the subset $\theta_{u}(V_{3})$
consisting of the 27 points
\begin{equation}
\mathcal{R}_{\ast}:=\{U_{ijk\overline{k}}|~i,j,k\in\mathbb{F}_{3}=\{0,1,2\}\},
\label{R = Uijkkbar}%
\end{equation}
where, for $k\in\mathbb{F}_{3},$ $\overline{k}$ denotes $-k(=2k).$ If
$\mathcal{R}_{\ast}$ is as in (\ref{R = Uijkkbar}) then we see that
$\langle\mathcal{R}_{\ast}\rangle$ is the $5$-flat $(L_{\mathcal{R}_{\ast}%
})^{\perp}$ where
\begin{equation*}
L_{\mathcal{R}_{\ast}}:=\{U_{\emptyset\emptyset00},U_{\emptyset\emptyset
12},U_{\emptyset\emptyset21}\}\subset\langle L_{c},L_{d}\rangle.
\label{L_R = ... 00,12,21}\qedhere%
\end{equation*}
\end{proof}

\begin{remark}
Consider the triplet $\{\mathcal{R},\mathcal{R}^{\prime},\mathcal{R}%
^{\prime\prime}\}$ of kind $\mathcal{C}_{2}$ which contains $\mathcal{R}$ as
in (\ref{R = Uijkk}). Then $\mathcal{R}^{\prime}=A_{\mu}(\mathcal{R})$ and
$\mathcal{R}^{\prime\prime}=A_{2\mu}(\mathcal{R})$ for suitable $\mu
\in(\mathbb{F}_{3})^{4},$ for example $\mu=0001.$ Consequently
\begin{equation*}
L_{\mathcal{R}^{\prime}}:=\{U_{\emptyset\emptyset01},U_{\emptyset\emptyset
12},U_{\emptyset\emptyset20}\},\quad L_{\mathcal{R}^{\prime\prime}%
}:=\{U_{\emptyset\emptyset02},U_{\emptyset\emptyset10},U_{\emptyset
\emptyset21}\}. \label{L_R' R''}%
\end{equation*}
Similarly, in the case of the triplet $\{\mathcal{R}_{\ast},\mathcal{R}_{\ast
}^{\prime},\mathcal{R}_{\ast}^{\prime\prime}\}$ of kind $\mathcal{C}_{2}$
which contains $\mathcal{R}_{\ast}$ as in (\ref{R = Uijkkbar}) we see that
\begin{equation*}
L_{\mathcal{R}_{\ast}^{\prime}}:=\{U_{\emptyset\emptyset01},U_{\emptyset
\emptyset10},U_{\emptyset\emptyset22}\},\quad L_{\mathcal{R}_{\ast}%
^{\prime\prime}}:=\{U_{\emptyset\emptyset02},U_{\emptyset\emptyset
11},U_{\emptyset\emptyset20}\}. \label{L_R*' R*''}%
\end{equation*}
So the three lines $\{L_{\mathcal{R}},L_{\mathcal{R}^{\prime}},L_{\mathcal{R}%
^{\prime\prime}}\}$ are a regulus in the $3$-flat $\langle L_{c},L_{d}\rangle,$
and the three lines $\{L_{\mathcal{R}_{\ast}},L_{\mathcal{R}_{\ast}^{\prime}%
},L_{\mathcal{R}_{\ast}^{\prime\prime}}\}$ are the opposite regulus, the
$9$-set supporting the two reguli being that hyperbolic quadric
$\mathcal{H}_{3}$ in the $3$-flat $\langle L_{c},L_{d}\rangle$ which has
$L_{c}$ and $L_{d}$ as its two external lines. Of course similar considerations
apply to all twelve of the planes in $\operatorname*{PG}(3,3)$ of kind
$\mathcal{P}_{2}.$ Thus each of the six pairs of lines in $\mathcal{L}_{4}$
gives rise to a pair of opposite reguli and hence to a set of
$6\times2\times3=36$ lines $L\subset\omega_{2}.$ Each such $L$ gives rise to a
rogue $\mathcal{R}$ of kind $\mathcal{C}_{2},$ namely to
$\mathcal{R}=L^{\perp}\cap\omega_{4}.$
\end{remark}

\begin{theorem}
Suppose that a $27$-set $\mathcal{R\subset\omega}_{4}$ is a rogue of kind
$\mathcal{C}_{3}.$ Then $\langle\mathcal{R}\rangle$ is a $6$-flat in
$\operatorname*{PG}(7,2).$
\end{theorem}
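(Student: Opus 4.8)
The plan is to mimic the proof of the preceding theorem about rogues of kind $\mathcal{C}_2$, but now working with a plane $P=\mathbb{P}V_3$ of kind $\mathcal{P}_3$, i.e.\ one containing exactly three of the four vertices $\langle\varepsilon_i\rangle$ of the distinguished tetrahedron $\mathcal{T}_\varepsilon$. First I would recall from Theorem~\ref{Thm 40 subgps} that there are exactly four such planes, namely those with one of the equations $\xi_r=0$ for $r\in\{1,2,3,4\}$, and that they form a single conjugacy class under $\mathcal{G}(\mathcal{L}_4)$; so it suffices to treat one representative, say the plane $V_3$ with equation $\xi_4=0$. Via the bijection $\theta_u$ of (\ref{Def theta_u}) we have $\theta_u(ijkl)=U_{ijkl}$, so the corresponding $27$-set is
\begin{equation*}
\mathcal{R}=\{U_{ijk\emptyset}\mid i,j,k\in\mathbb{F}_3\}\subset\langle L_a,L_b,L_c\rangle .
\end{equation*}

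The key step is then to pin down the span $\langle\mathcal{R}\rangle$. Every point of $\mathcal{R}$ has its $V_d$-component equal to $0$, so certainly $\mathcal{R}\subset\langle L_a,L_b,L_c\rangle$, which is only a $5$-flat; I need to show the span is in fact a $6$-flat, so I must identify which hyperplane of $\operatorname*{PG}(7,2)$ contains $\mathcal{R}$ and verify it is the \emph{only} one. The natural candidate is the hyperplane $q^\perp$ for a suitable point $q$. Since each pair $a,b$ on a line $L_h\in\mathcal{L}_4$ satisfies $a\cdot b=1$ for $a\ne b$ and $a\cdot b=0$ for $a=b$ (the same observation used in the $\mathcal{C}_2$ proof), a point $U_{\emptyset\emptyset\emptyset l}\in L_d$ with $l\in\{0,1,2\}$ is perpendicular to every $U_{ijk\emptyset}$, since the only possibly-nonzero contribution $v_d\cdot(u_d(l))$ vanishes as the $d$-component of each point of $\mathcal{R}$ is $0$. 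So every point of $\mathcal{R}$ lies in $U_{\emptyset\emptyset\emptyset l}^\perp$ for each of the three points of $L_d$, hence $\langle\mathcal{R}\rangle\subseteq L_d^\perp$, which is a $5$-flat — too small. This means the first guess is wrong: $\langle\mathcal{R}\rangle$ cannot be a $6$-flat if it sits in $\langle L_a,L_b,L_c\rangle$. The resolution is that $\langle\mathcal{R}\rangle$ must actually be all of $\langle L_a,L_b,L_c\rangle$, which IS the $5$-flat $L_d^\perp$; but the theorem claims a $6$-flat, so I have the wrong representative plane. The correct reading is that a $\mathcal{P}_3$ plane contains three vertices but the $27$-set it produces spans more: I should instead compute $\langle\mathcal{R}\rangle$ directly by finding $\dim\langle\mathcal{R}\rangle$ as a rank over $\mathbb{F}_2$.

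So the actual plan is: write out the $27$ vectors $U_{ijk\emptyset}=(u_a(i),u_b(j),u_c(k),0)\in V_8$ using the explicit forms of $\zeta_a,\zeta_b,\zeta_c$ from (\ref{zeta-a, ... ,zeta_d}), so that $u_a(0)=e_1+e_8$, $u_a(1)=e_1$, $u_a(2)=e_8$ (since $U_{0000}=u$ forces $u_h(0)=e_i+e_j$ on $L_h={\prec}e_i,e_j{\succ}$), and similarly for $b,c$; compute the $\mathbb{F}_2$-span of these $27$ vectors. The span visibly contains $e_1,e_8,e_2,e_7,e_3,e_6$ (differences of points on each of $L_a,L_b,L_c$) and the vector $u=e_1+\dots+e_8$ itself (that is $U_{0000}$), hence contains $e_4+e_5$; it contains nothing else from the $\langle e_4,e_5\rangle$ direction, because every vector in $\mathcal{R}$ has zero $V_d$-component — \emph{wait}, $u$ has $V_d$-component $e_4+e_5\ne 0$, contradiction with $U_{ijk\emptyset}$ having zero $d$-component. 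The point is that $U_{0000}$ has line-weight $4$, not of the form $U_{ijk\emptyset}$; so $\mathcal{R}=\theta_u(V_3)$ is \emph{not} simply $\{U_{ijk\emptyset}\}$ unless $\theta_u(ijkl)=U_{ijkl}$ — which it is, by the sentence after (\ref{epsilon_1,2,3,4}). Reconciling: $u_h(0)=e_i+e_j$ makes $U_{ijk\emptyset}$ genuinely have zero $d$-component, so its span lies in $\langle L_a,L_b,L_c\rangle$, a $5$-flat; thus for this plane we get a $5$-flat, \emph{not} a $6$-flat. Hence $\xi_4=0$ must \emph{not} be a $\mathcal{P}_3$ plane in the relevant sense, or the $\mathcal{P}_3$ planes are instead those like $\xi_1+\xi_2+\xi_3+\xi_4=0$ relative to a different coordinatization — I would re-examine (\ref{8 type P_0 planes}) and the vertex count to correctly identify the four $\mathcal{P}_3$ planes, then carry out the explicit rank computation over $\mathbb{F}_2$ for one representative and read off $\dim=7$ (a $6$-flat). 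The main obstacle, and the crux of the proof, is exactly this bookkeeping: correctly translating ``plane of kind $\mathcal{P}_3$ in $\operatorname*{PG}(3,3)$'' into an explicit set of $27$ vectors in $V(8,2)$ and then showing their $\mathbb{F}_2$-rank is precisely $7$ — neither $6$ (which would contradict the Segre comparison) nor $8$. Once the $27$ vectors are listed, the rank computation is mechanical; the subtlety is getting the right representative plane and handling the interaction between the $\mathbb{F}_3$-structure of $\mathcal{G}_{81}$ and the $\mathbb{F}_2$-structure of $V_8$.
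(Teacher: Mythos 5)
Your proposal founders on a single but fatal bookkeeping error, and you never recover from it. The plane of kind $\mathcal{P}_{3}$ with equation $\xi_{4}=0$ corresponds under $\theta_{u}$ to the $27$-set $\mathcal{R}=\{U_{ijk0}\mid i,j,k\in\mathbb{F}_{3}\}$, \emph{not} to $\{U_{ijk\emptyset}\}$. The subscript $0$ here is the field element $0\in\mathbb{F}_{3}=\{0,1,2\}$, which labels the \emph{nonzero} vector $u_{d}(0)\in V_{d}$ (with the paper's choices, $u_{d}(0)=e_{4}+e_{5}$, since $U_{0000}=u$); the zero vector of $V_{d}$ is labelled $\emptyset$, and $\emptyset$ is not an element of $(\mathbb{F}_{3})^{4}$, so it never occurs in the image of $\theta_{u}$. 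Every point of $\mathcal{R}$ therefore has line-weight $4$ and nonzero $d$-component equal to $u_{d}(0)$ — consistent with $\mathcal{R}\subset\omega_{4}$, whereas your set $\{U_{ijk\emptyset}\}$ lies in $\omega_{3}$, which should have been the warning sign. Because you conflate ``$\xi_{4}=0$ in $V(4,3)$'' with ``zero $V_{d}$-component in $V_{8}$'', you land on $\langle\mathcal{R}\rangle\subseteq\langle L_{a},L_{b},L_{c}\rangle$, a $5$-flat, contradict the theorem, and then spend the rest of the proposal second-guessing which planes are of kind $\mathcal{P}_{3}$ (they are exactly the four $\xi_{r}=0$, as you first said) instead of locating the real mistake. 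The proposal ends with an unexecuted plan (``re-examine \dots\ and read off $\dim=7$''), so as written it is not a proof.

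Once the identification is corrected, the paper's argument is short and is essentially the one you were aiming for: every $U_{ijk0}$ is orthogonal to $U_{\emptyset\emptyset\emptyset0}=u_{d}(0)$ (since $u_{d}(0)\cdot u_{d}(0)=0$ and the $V_{h}$ are pairwise orthogonal), so $\mathcal{R}\subset\langle U_{\emptyset\emptyset\emptyset0}\rangle^{\perp}$, a $6$-flat; and the differences $U_{ijk0}+U_{i'j'k'0}$ generate all of $V_{a}\oplus V_{b}\oplus V_{c}$ while $U_{0000}=u$ contributes $u_{d}(0)$, giving vector rank $7$, so $\langle\mathcal{R}\rangle$ is exactly that $6$-flat. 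Your proposed final step (an explicit $\mathbb{F}_{2}$-rank computation on the $27$ vectors) would also work, but only after the $27$ vectors are correctly identified, which is precisely the step you did not complete.
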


\begin{proof}
By Theorem \ref{Thm 40 subgps} there are four triplets in $\omega_{4}$ of kind
$\mathcal{C}_{3},$ which arise from the four planes $\xi_{r}=0,$
$r\in\{1,2,3,4\}.$ Consider the plane $P=\mathbb{P}V_{3}$ with equation
$\xi_{4}=0.$ Then, see (\ref{255 points}), it gives rise to the following
triplet of $27$-sets%
\begin{equation*}
\mathcal{R}:=\{U_{ijk0}\},~\mathcal{R}^{\prime}:=\{U_{ijk1}\},~\mathcal{R}%
^{\prime\prime}:=\{U_{ijk2}\},~~~i,j,k\in\mathbb{F}_{3}=\{0,1,2\}.
\end{equation*}
It quickly follows that $\langle\mathcal{R}\rangle,~\langle\mathcal{R}%
^{\prime}\rangle$ and $\langle\mathcal{R}^{\prime\prime}\rangle$ are $6$-flats,
namely%
\begin{equation*}
\langle\mathcal{R}\rangle=\langle U_{\emptyset\emptyset\emptyset0}\rangle^{\perp
},~\langle\mathcal{R}^{\prime}\rangle=\langle U_{\emptyset\emptyset\emptyset
1}\rangle^{\perp},~\langle\mathcal{R}^{\prime\prime}\rangle=\langle U_{\emptyset
\emptyset\emptyset2}\rangle^{\perp},
\end{equation*}
where $\{U_{\emptyset\emptyset\emptyset0},U_{\emptyset\emptyset\emptyset
1},U_{\emptyset\emptyset\emptyset2}\}=L_{d}.$ Of course the other three
triplets in $\omega_{4}$ of kind $\mathcal{C}_{3}$ are associated in a similar
way with the other three lines $L_{a},L_{b},L_{c}\in\mathcal{L}_{4}.$
\end{proof}

\section{Intersection properties}

\subsection{Introduction}

If $\Delta_{1}$ and $\Delta_{2}$ are any two distinct triplets of $27$-set
denizens of $\omega_{4}$ note that
\begin{equation}
\mathcal{N}(\Delta_{1},\Delta_{2}):=\{R_{1}\cap R_{2}%
:R_{1}\in\Delta_{1},R_{2}\in\Delta_{2}\}\label{ennead N}%
\end{equation}
\emph{is an ennead of }$9$\emph{-sets which provides a partition of }%
$\omega_{4}.$ For suppose that $H_{1}$ and $H_{2}$ are two $(Z_{3})^{3}$
subgroups of $\mathcal{G}_{81}$ whose orbits in $\omega_{4}$ yield the
triplets $\Delta_{1}$ and $\Delta_{2}.$ Then the $(Z_{3})^{2}$ subgroup
$H=H_{1}\cap H_{2}\ $yields one member $\{hu,h\in H\}$ of the ennead of \emph{
}$9$-sets (\ref{ennead N}), the other members of the ennead being the other
orbits of $H$ in $\omega_{4}.$

Since the origin of the present research arose from our interest in Segre
varieties $\mathcal{S}_{3}(2)$ in $\operatorname*{PG}(7,2),$ let us at least
look at the different kinds of intersection\emph{ }$\mathcal{S\cap R}$ of a
Segre variety $\mathcal{S}\subset\omega_{4}$ with another $27$-set denizen
$\mathcal{R}$ of $\omega_{4}.$ Such an intersection we will term a
\emph{section }of the Segre $\mathcal{S}.$ Recall from Theorem
\ref{Thm Only P_0 gives Segres} that a Segre variety $\mathcal{S}\subset
\omega_{4}$ arises as $\mathcal{S}_{H}:=\{hu,h\in H\}$ from a $(Z_{3})^{3}$
subgroup $H<\mathcal{G}_{81}$ which is of class $\mathcal{C}_{0},$ being the
image, under the isomorphism $A$ in (\ref{isomorphism A}), of a $3$%
-dimensional subspace $V_{3}$ such that the projective plane $P=\mathbb{P}%
V_{3}\subset\operatorname*{PG}(3,3)$ is of kind $\mathcal{P}_{0}.$

\begin{lemma}
\label{Lem 13 lines in P of kind P_0}Suppose that $P=\mathbb{P}V_{3}%
\subset\mathbb{P}V(4,3)$ is a projective plane of kind $\mathcal{P}_{0}.$ Then
the $13$ lines $L\subset P$ fall into three $\mathcal{G}(\mathcal{L}_{4}%
)$-orbits:
\begin{itemize}
  \item[(i)] $3$ lines of kind $\Lambda_{4}$;
  \item[(ii)] $6$ lines of kind $\Lambda_{6}$;
  \item[(iii)] $4$ lines of kind $\Lambda_{3}$.
\end{itemize}
\end{lemma}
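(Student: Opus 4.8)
The plan is to compute the weight pattern $\pi_\varepsilon(L)$ for each of the $13$ lines in a plane $P$ of kind $\mathcal{P}_0$, and match the resulting patterns against the seven types in the table of Theorem~\ref{Thm 130 subgps}. By $\mathcal{G}(\mathcal{L}_4)$-conjugacy it suffices to work with one concrete plane, the one with equation $\xi_4 = \xi_1 + \xi_2 + \xi_3$ (say), since all eight planes of kind $\mathcal{P}_0$ are $\mathcal{G}(\mathcal{L}_4)$-equivalent. Then one lists the $13$ points of $P$, classifies them by $\varepsilon$-weight, and goes through the $13$ lines.

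First I would note that $P$ contains no vertex $\langle\varepsilon_i\rangle$ of $\mathcal{T}_\varepsilon$, so every point of $P$ has $\varepsilon$-weight $\ge 2$; a short count shows the $13$ points split as $6$ of weight $2$, $3$ of weight $3$, $4$ of weight $4$ — the four weight-$4$ points being exactly those $\langle\rho\rangle$ with $\rho\in\Xi\cup\Xi^\ast$ lying in $P$ (by Remark~\ref{Rmk all Xi or all Xi* for kind P_0}, these four are a $\mathcal{P}_0$-plane's worth, either all in $\Xi$ or all in $\Xi^\ast$). Now I run through the lines. A line through two of the four weight-$4$ points: since those four points are in ``general position'' inside $P$ (no three collinear — they span $P$ and any two of them determine a line whose third point must then have weight $<4$), each such line has pattern $(\ast,\ast,\ast,2)$; checking the third point's weight shows it is $(0,2,0,2)$, i.e. kind $\Lambda_4$, and there are $\binom{4}{2}=6$ such pairs but they come in $3$ complementary pairs within $P$ — actually each of the $3$ ``diagonal'' lines of the quadrilateral of weight-$4$ points carries two of them, giving $3$ lines of kind $\Lambda_4$, accounting for all $4\cdot3/2$? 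I must be careful here: the four weight-$4$ points form a quadrilateral with $\binom{4}{2}=6$ connecting lines, but in $\mathrm{PG}(2,3)$ four points in general position have $6$ joining lines, three pairs of which meet in the three ``diagonal points''. So the count of lines carrying two weight-$4$ points is $6$, not $3$ — hence I expect these to split further, and the claimed ``$3$ lines of kind $\Lambda_4$'' must come from lines carrying \emph{exactly} two weight-$4$ points in a specific configuration. This combinatorial bookkeeping inside $\mathrm{PG}(2,3)$ is the main obstacle, and the resolution is to do it explicitly for the chosen plane rather than abstractly.

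So concretely: having fixed $P:\xi_4=\xi_1+\xi_2+\xi_3$ and listed its $13$ points with $\varepsilon$-weights, I would tabulate all $13$ lines by choosing a representative pair of points on each, computing the third point, and reading off $(n_1,n_2,n_3,n_4)$. The lines with $n_4=2$ (two weight-$4$ points) will be seen to number $3$, not $6$, because of the following: of the $6$ joins of the four special points, $3$ of them pass through a weight-$4$ point of $P$ that is \emph{not} among our four — wait, there are only four weight-$4$ points. The correct statement is that the four special points, together with the structure of $P$, are such that exactly $3$ of their $6$ joining lines have their third point of weight $2$ and the other $3$ have third point of weight... but a line has only $4$ points, so if $2$ have weight $4$ the remaining $2$ have weights summing into the pattern; a $(0,2,0,2)$ pattern uses up all $4$ points. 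Thus a line cannot contain $3$ weight-$4$ points (that would need pattern $(\cdot,\cdot,\cdot,3)$, absent from the table and impossible since $\Xi\cup\Xi^\ast$ has no three-point subset spanning only a line), so every join of two special points has exactly $2$ weight-$4$ points and $2$ weight-$2$ points: pattern $(0,2,0,2)=\Lambda_4$. That forces $6$ lines of kind $\Lambda_4$, contradicting the claim of $3$ — so I must have miscounted the weight-$4$ points, and in fact $P$ of kind $\mathcal{P}_0$ contains \emph{more} than $4$... no: it contains exactly the $\langle\rho\rangle$, $\rho\in\Xi\cup\Xi^\ast$, that lie in $P$, and Remark~\ref{Rmk all Xi or all Xi* for kind P_0} says a $\mathcal{P}_0$-plane generated by three elements all of $\Xi$ contains no fourth such, so it contains exactly $3$, not $4$. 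Good — that is the fix: a $\mathcal{P}_0$-plane contains exactly $3$ points of weight $4$.

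With exactly $3$ weight-$4$ points (no two of whose joins contain the third, since $\Xi$ has no three collinear), the join of each pair is one line, giving $\binom{3}{2}=3$ lines, each with third and fourth points of weight $2$ (pattern $(0,2,0,2)$, kind $\Lambda_4$) — part (i). The remaining $10$ lines each contain at most one weight-$4$ point. A line through exactly one weight-$4$ point and... there are $3$ weight-$4$ points, each on $4$ lines of $P$, $2$ of which are the $\Lambda_4$-lines already counted, leaving $2$ per special point, i.e. $3\cdot 2 = 6$ lines containing exactly one weight-$4$ point; computing their third and fourth points gives weight pattern $(0,1,2,1)$, kind $\Lambda_6$ — part (ii). The final $13 - 3 - 6 = 4$ lines contain no weight-$4$ point; their four points are drawn from the $6$ weight-$2$ and $3$ weight-$3$ points, and the count/pattern forces $(0,3,1,0)$, kind $\Lambda_3$ — part (iii). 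Each of these last three counting steps I would verify on the explicit coordinate list for the chosen plane; the only real subtlety, already resolved above, is pinning down that a $\mathcal{P}_0$-plane meets $\Xi\cup\Xi^\ast$ in exactly $3$ points, which is immediate from Remark~\ref{Rmk all Xi or all Xi* for kind P_0} together with Theorem~\ref{Thm Only P_0 gives Segres} (the plane gives a Segre, whose point lies on exactly $3$ generators).
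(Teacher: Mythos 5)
Your overall route is the same as the paper's: reduce to one representative plane of kind $\mathcal{P}_{0}$ (the eight such planes form a single $\mathcal{G}(\mathcal{L}_{4})$-conjugacy class), tabulate the $\varepsilon$-weights of its points, and match the weight patterns of its $13$ lines against the table in Theorem \ref{Thm 130 subgps}; the paper simply writes the $13$ lines down explicitly as $\mathbb{P}{\prec}\beta,\gamma{\succ}$, $\mathbb{P}{\prec}\beta,\gamma\pm\delta{\succ}$, $\mathbb{P}{\prec}\beta\pm\gamma,\beta\pm\delta{\succ}$, and so on, and reads off the patterns. Your final counts $3+6+4$ and the assignment to $\Lambda_{4},\Lambda_{6},\Lambda_{3}$ are correct, and your eventual correction that a $\mathcal{P}_{0}$-plane carries exactly \emph{three} weight-$4$ points (not four) is the right fix, in agreement with the proof of Theorem \ref{Thm Only P_0 gives Segres}.

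There is, however, one arithmetic error that survives your self-corrections and that undermines the step you lean on for part (iii). You assert that the $13$ points of $P$ split as $6$ of weight $2$, $3$ of weight $3$ and (after your fix) $3$ of weight $4$ --- which is only $12$ points. The correct split is $6+4+3$: a $\mathcal{P}_{0}$-plane has \emph{four} points of $\varepsilon$-weight $3$; for $P=\mathbb{P}{\prec}\beta,\gamma,\delta{\succ}$ these are $\langle 1110\rangle,\langle 0111\rangle,\langle 1011\rangle,\langle 1101\rangle$, as one sees from the array (\ref{GF(3) 9+9+9}). This matters because your ``count/pattern forces $(0,3,1,0)$'' claim for the last four lines is really an incidence count: each point of $P$ lies on $4$ of its lines, so the weight-$3$ points supply $4\times 4=16$ point--line incidences, of which the six $\Lambda_{6}$-lines absorb $6\times 2=12$, leaving exactly one weight-$3$ point on each of the remaining four lines; together with the $6\times 4-3\times 2-6\times 1=12$ leftover weight-$2$ incidences this does force the pattern $(0,3,1,0)$. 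With your figure of three weight-$3$ points the same count leaves $12-12=0$ weight-$3$ incidences for the last four lines, which would then have only three points each --- a contradiction showing that the tally, not the conclusion, is at fault. Correct the point count (or, as the paper does, just exhibit the four points of each of the $13$ lines explicitly) and your argument is complete.
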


\begin{proof}
Without loss of generality we may, see Remark
\ref{Rmk all Xi or all Xi* for kind P_0}, consider the particular plane
\begin{equation*}
P=\mathbb{P}V_{\beta\gamma\delta},\text{ \quad where }V_{\beta\gamma\delta
}={\prec}\beta,\gamma,\delta{\succ}\subset V(4,3)=(\mathbb{F}_{3})^{4}.
\label{V_beta gamma delta}%
\end{equation*}
Observe that the $13$ lines in the plane $P$ are as follows:\newline(i) the 3
lines $\mathbb{P}{\prec}\beta,\gamma{\succ},\mathbb{P}{\prec}\beta,\delta
{\succ},\mathbb{P}{\prec}\gamma,\delta{\succ}\ $of weight pattern $(0,2,0,2);$%
\newline(ii) the 6 lines $\mathbb{P}{\prec}\beta,\gamma\pm\delta{\succ}
,\ \mathbb{P}{\prec}\gamma,\beta\pm\delta{\succ},\
\mathbb{P}{\prec}\delta,\beta \pm\gamma{\succ}\ $of weight pattern
$(0,1,2,1);$\newline(iii) the 4 lines
$\mathbb{P}{\prec}\beta\pm\gamma,\beta\pm\delta{\succ}\ $of weight pattern
$(0,3,1,0).$\newline Hence, see (\ref{Table 130 lines}), the stated result
holds.
\end{proof}

\smallskip

\noindent Equivalently expressed, the thirteen $Z_{3}\times Z_{3}$ subgroups of
$\langle A_{\beta},A_{\gamma},A_{\delta}\rangle<\mathcal{G}_{81}$ comprise: (i)
three of class $\mathcal{K}_{4}$ (ii) six of class $\mathcal{K}_{6}$ (iii) four
of class $\mathcal{K}_{3}.$

\subsection{Sections of a Segre variety $\mathcal{S}\subset\omega_{4}$}

Without loss of generality we may consider the particular Segre variety
$\mathcal{S}_{\beta\gamma\delta}:=\mathcal{S}_{H}$ where $H=\langle A_{\beta
},A_{\gamma},A_{\delta}\rangle$:%
\begin{equation*}
\mathcal{S}_{\beta\gamma\delta}=\theta_{u}(V_{\beta\gamma\delta}),\text{ \quad
where }V_{\beta\gamma\delta}={\prec}\beta,\gamma,\delta{\succ}\subset
(\mathbb{F}_{3})^{4}. \label{S_beta gamma delta}%
\end{equation*}
In detail the $27$ elements of $V_{\beta\gamma\delta}$ are:%
\begin{equation}%
\begin{tabular}
[c]{|c@{~~}c@{~~}c|}\hline
$0000$ & $1221$ & $2112$\\
$2121$ & $0012$ & $1200$\\
$1212$ & $2100$ & $0021$\\\hline
\end{tabular}
\ \ ~\underrightarrow{T_{\delta}}~ \ \
\begin{tabular}
[c]{|c@{~~}c@{~~}c|}\hline%% [c]{|ccc|}\hline
$2211$ & $0102$ & $1020$\\
$1002$ & $2220$ & $0111$\\
$0120$ & $1011$ & $2202$\\\hline
\end{tabular}
\ \  ~\underrightarrow{T_{\delta}}~ \ \
\begin{tabular}
[c]{|c@{~~}c@{~~}c|}\hline%%[c]{|ccc|}\hline
$1122$ & $2010$ & $0201$\\
$0210$ & $1101$ & $2022$\\
$2001$ & $0222$ & $1110$\\\hline
\end{tabular}
\ \  . \label{GF(3) 9+9+9}%
\end{equation}
In the display (\ref{GF(3) 9+9+9}) the rows of each $9$-set are orbits of
$\langle T_{\beta}\rangle$ and the columns of each $9$-set are orbits of
$\langle T_{\gamma}\rangle,$ where $T_{\lambda}$ denotes the translation which
maps $\mu\in(\mathbb{F}_{3})^{4}$ to $\lambda+\mu\in(\mathbb{F}_{3})^{4};$ of
course $(T_{\lambda})^{2}=T_{2\lambda}.$ Incidentally observe from
(\ref{GF(3) 9+9+9}) that, in conformity with (\ref{8 type P_0 planes}),
$V_{\beta\gamma\delta}$ is that $3$-dimensional subspace of $V(4,3)$ having
the equation
\begin{equation*}
\xi_{1}+\xi_{2}+\xi_{3}+\xi_{4}=0. \label{xi + xi +xi + xi = 0}%
\end{equation*}
Using the basis $\mathcal{B},$ with $\zeta_{h}$ as in (\ref{zeta-a, ...
,zeta_d}), we see, in the shorthand notation of Eq.~(\ref{8 lines through u}),
that $\mathcal{S}_{\beta\gamma\delta}$ thus consists of the following $27$
points:%
\begin{equation}%
\begin{tabular}
[c]{|c@{~~}c@{~~}c|}\hline %[c]{|ccc|}\hline
$u$ & $1256$ & $3478$\\
$5678$ & $56u$ & $78u$\\
$1234$ & $12u$ & $34u$\\\hline
\end{tabular}
\ \  ~\underrightarrow{A_{\delta}}~ \ \
\begin{tabular}
[c]{|c@{~~}c@{~~}c|}\hline %[c]{|ccc|}\hline
$2358$ & $25u$ & $38u$\\
$58u$ & $137u$ & $246u$\\
$23u$ & $468u$ & $157u$\\\hline
\end{tabular}
\ \ ~\underrightarrow{A_{\delta}}~ \ \
\begin{tabular}
[c]{|c@{~~}c@{~~}c|}\hline %[c]{|ccc|}\hline
$1467$ & $16u$ & $47u$\\
$67u$ & $248u$ & $135u$\\
$14u$ & $357u$ & $268u$\\\hline
\end{tabular}
\ \ . \label{9+9+9}%
\end{equation}
Observe that each $9$-set in (\ref{9+9+9}) is a Segre variety $\mathcal{S}%
_{2}(2),$ whose generators are the rows and columns in the display, these
being orbits of, respectively, $\langle A_{\beta}\rangle$ and $\langle
A_{\gamma}\rangle$.

Acting upon $\mathcal{S}_{\beta\gamma\delta}$ with a $Z_{3}$ subgroup of
$\mathcal{G}_{81}$ which is not in $\langle A_{\beta},A_{\gamma},A_{\delta
}\rangle,$ for example with $\langle A_{\alpha}\rangle,$ will produce the
siblings $\mathcal{S}_{\beta\gamma\delta}^{\prime},~\mathcal{S}_{\beta
\gamma\delta}^{\prime\prime}$ of $\mathcal{S}_{\beta\gamma\delta},$ these
siblings being the images under $\theta_{u}$ of the two affine subspaces in
$V(4,3)=(\mathbb{F}_{3})^{4}$ which are translates of $V_{\beta\gamma\delta}.$

Recalling the proof of Lemma \ref{Lem 13 lines in P of kind P_0}, let us make
the following choices of representatives for the three kinds of $2$%
-dimensional subspaces $V_{2}\subset V_{\beta\gamma\delta}$:
\begin{equation*}
\text{(i) }{\prec}\beta,\gamma{\succ},\quad\text{(ii) }{\prec}\delta,\beta
+\gamma{\succ},\quad\text{(iii) }{\prec}\beta-\gamma,\beta-\delta{\succ}.
\label{3 choices i ii iii}%
\end{equation*}
For the choice (i) the nine elements of $V_{2}={\prec}\beta,\gamma{\succ}\ $are
those in the first $9$-set in (\ref{GF(3) 9+9+9}). The corresponding section
$\theta_{u}(V_{2})$ of $\mathcal{S}_{\beta\gamma\delta}$ is the first of the
three $\mathcal{S}_{2}(2)$ varieties in (\ref{9+9+9}).

For the choice (ii) the nine elements of $V_{2}={\prec}\delta,\beta+\gamma
{\succ}\
$satisfy $\xi_{1}=\xi_{2}$ and are those underlined in:%
\begin{equation*}%
\begin{tabular}
[c]{|c@{~~}c@{~~}c|}\hline%[c]{|ccc|}\hline
\underline{$0000$} & $1221$ & $2112$\\
$2121$ & \underline{$0012$} & $1200$\\
$1212$ & $2100$ & \underline{$0021$}\\\hline
\end{tabular}
\ \  ~\underrightarrow{T_{\delta}}~ \ \
\begin{tabular}
[c]{|c@{~~}c@{~~}c|}\hline%[c]{|ccc|}\hline
\underline{$2211$} & $0102$ & $1020$\\
$1002$ & \underline{$2220$} & $0111$\\
$0120$ & $1011$ & \underline{$2202$}\\\hline
\end{tabular}
\ \  ~\underrightarrow{T_{\delta}}~ \ \
\begin{tabular}
[c]{|c@{~~}c@{~~}c|}\hline%[c]{|ccc|}\hline%
\underline{$1122$} & $2010$ & $0201$\\
$0210$ & \underline{$1101$} & $2022$\\
$2001$ & $0222$ & \underline{$1110$}\\\hline
\end{tabular}
\ \ .\label{section (ii)}%
\end{equation*}
We will term the resulting section $\theta_{u}(V_{2})$ of the $\mathcal{S}%
_{3}(2)$ a $3$\emph{-generator set:} it consists of three parallel generators
of $\mathcal{S}_{3}(2)$ which meet a `perpendicular' $\mathcal{S}_{2}(2)$ in
three points no two of which lie on the same generator of the $\mathcal{S}%
_{2}(2).$

Finally the nine elements of $V_{2}={\prec}\beta-\gamma,\beta-\delta{\succ}$
satisfy $\xi_{4}=0$ and are those underlined in:%
\begin{equation*}%
\begin{tabular}
[c]{|c@{~~}c@{~~}c|}\hline%[c]{|ccc|}\hline
\underline{$0000$} & $1221$ & $2112$\\
$2121$ & $0012$ & \underline{$1200$}\\
$1212$ & \underline{$2100$} & $0021$\\\hline
\end{tabular}
\ \ ~\underrightarrow{T_{\delta}}~ \ \
\begin{tabular}
[c]{|c@{~~}c@{~~}c|}\hline%[c]{|ccc|}\hline
$2211$ & $0102$ & \underline{$1020$}\\
$1002$ & \underline{$2220$} & $0111$\\
\underline{$0120$} & $1011$ & $2202$\\\hline
\end{tabular}
\ \ ~\underrightarrow{T_{\delta}}~ \ \
\begin{tabular}
[c]{|c@{~~}c@{~~}c|}\hline%[c]{|ccc|}\hline
$1122$ & \underline{$2010$} & $0201$\\
\underline{$0210$} & $1101$ & $2022$\\
$2001$ & $0222$ & \underline{$1110$}\\\hline
\end{tabular}
\ \ . \label{section (iii)}%
\end{equation*}
We will term the resulting section $\theta_{u}(V_{2})$ of the
$\mathcal{S}_{3}(2)$ a \emph{fan:}

\begin{definition}
A subset $\mathcal{F}$ of nine points of a $\mathcal{S}_{3}(2)$ is a
\emph{fan} (= \emph{f}ar-\emph{a}part \emph{n}ine) if no two points of
$\mathcal{F}$ lie on the same generator. (So if $\mathcal{F}$ is a fan for a
$\mathcal{S}_{3}(2)$ then the 3 generators through each of the 9 points of
$\mathcal{F}$ account for all $3\times9=27$ generators of $\mathcal{S}%
_{3}(2).$)
\end{definition}

We may summarize the foregoing as follows.

\begin{theorem}
A section of a Segre variety $\mathcal{S}_{3}(2)$ in $\omega_{4}$ is either
(i) a $\mathcal{S}_{2}(2),$ or (ii) a 3-generator set, or (iii) a fan.
\end{theorem}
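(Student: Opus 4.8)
The plan is to reduce the theorem to the classification of two-dimensional subspaces $V_2 \subset V_{\beta\gamma\delta}$ that was already carried out in Lemma~\ref{Lem 13 lines in P of kind P_0}. By Theorem~\ref{Thm Only P_0 gives Segres} and the discussion preceding this subsection, every Segre variety $\mathcal{S} \subset \omega_4$ is $\mathcal{G}(\mathcal{L}_4)$-equivalent to $\mathcal{S}_{\beta\gamma\delta} = \theta_u(V_{\beta\gamma\delta})$, so it suffices to treat this one case. A section $\mathcal{S} \cap \mathcal{R}$ is, by construction, $\theta_u(V_{\beta\gamma\delta} \cap V_3)$ where $V_3$ is the $3$-subspace of $V(4,3)$ giving rise to the $(Z_3)^3$ subgroup whose orbit is $\mathcal{R}$ (or one of its cosets). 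Either $V_3 = V_{\beta\gamma\delta}$, in which case the section is all of $\mathcal{S}$ and is trivially an $\mathcal{S}_3(2)$ but not a proper section; or $V_3 \neq V_{\beta\gamma\delta}$, in which case the intersection is a plane $\mathbb{P}(V_{\beta\gamma\delta} \cap V_3)$, i.e.\ one of the $13$ lines of $P = \mathbb{P}V_{\beta\gamma\delta}$, so the section is $\theta_u(V_2)$ for a $2$-dimensional $V_2 \subset V_{\beta\gamma\delta}$.

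Next I would invoke Lemma~\ref{Lem 13 lines in P of kind P_0}, which partitions these $13$ lines into three $\mathcal{G}(\mathcal{L}_4)$-orbits, with representatives the three choices (i) ${\prec}\beta,\gamma{\succ}$, (ii) ${\prec}\delta,\beta+\gamma{\succ}$, (iii) ${\prec}\beta-\gamma,\beta-\delta{\succ}$ of $2$-subspaces $V_2$. Since $\mathcal{G}(\mathcal{L}_4)$ acts on everything in sight, any two sections arising from lines in the same orbit of Lemma~\ref{Lem 13 lines in P of kind P_0} are projectively equivalent, so it is enough to identify the geometric type of $\theta_u(V_2)$ for these three representatives. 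For (i) I would point to the display (\ref{9+9+9}): the nine elements of ${\prec}\beta,\gamma{\succ}$ are the first $9$-set of (\ref{GF(3) 9+9+9}), and the corresponding $\theta_u$-image is the first $\mathcal{S}_2(2)$ displayed in (\ref{9+9+9}), with rows and columns the generators; this is case (i) of the theorem. For (ii) and (iii) I would exhibit the underlined nine elements in the two displays already given and check directly from the row/column structure that no two of them lie on a common generator of $\mathcal{S}_{\beta\gamma\delta}$ (a generator being a row or column of one of the three $9$-sets): case (ii) hits exactly one entry per row and per column within each $9$-set, hence three mutually non-collinear points in three parallel generators, giving a $3$-generator set; case (iii) hits all nine generator-classes with multiplicity $1$ overall, giving a fan in the sense of the Definition just stated.

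The only real content beyond bookkeeping is verifying the ``no two on the same generator'' condition for the representatives (ii) and (iii), and — for (ii) — producing the ``perpendicular'' $\mathcal{S}_2(2)$ and checking that the three points of the section meet it in the prescribed pattern; but both of these are immediate from inspecting the underlined entries in the $3 \times 3$ arrays of (\ref{GF(3) 9+9+9}), since collinearity on a generator of $\mathcal{S}_{\beta\gamma\delta}$ corresponds exactly to lying in a common row or column of a single $9$-set. The main (very mild) obstacle is simply making sure the translation structure is used correctly: in (\ref{GF(3) 9+9+9}) the three $9$-sets are the cosets of ${\prec}\beta,\gamma{\succ}$ under $T_\delta$, so a generic $2$-subspace $V_2$ meets each coset in exactly one $\langle T_\beta\rangle$- or $\langle T_\gamma\rangle$-coset only when $V_2$ contains $\beta$ or $\gamma$; for the other orbits $V_2$ meets each $9$-set in a ``diagonal'' triple, and one must read off from (\ref{GF(3) 9+9+9}) which rows and columns those triples occupy. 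Once that reading is done the three cases fall out and the theorem follows.
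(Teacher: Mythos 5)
Your proposal is correct and follows essentially the same route as the paper: reduce (via the conjugacy of the $\mathcal{C}_{0}$ subgroups and the $\mathcal{G}_{81}$-action) to $\mathcal{S}_{\beta\gamma\delta}$, observe that a section is $\theta_{u}$ of (a coset of) a $2$-subspace $V_{2}\subset V_{\beta\gamma\delta}$, and then use Lemma~\ref{Lem 13 lines in P of kind P_0} to check only the three representatives ${\prec}\beta,\gamma{\succ}$, ${\prec}\delta,\beta+\gamma{\succ}$, ${\prec}\beta-\gamma,\beta-\delta{\succ}$ against the displays (\ref{GF(3) 9+9+9}) and (\ref{9+9+9}). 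This is precisely the ``foregoing'' that the paper's one-line proof summarizes, down to the same choice of representatives and the same inspection of the underlined entries.
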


\subsection{Hamming distances and troikas}

In the $\operatorname*{GF}(3)$ space $V(4,3)=(\mathbb{F}_{3})^{4}$ we have
been employing the basis $\mathcal{B}_{\varepsilon}:=\{\varepsilon
_{1},\varepsilon_{2},\varepsilon_{3},\varepsilon_{4}\},$ see
(\ref{epsilon_1,2,3,4}). Using this basis it helps at times to make use of the
associated \emph{Hamming distance} $\operatorname*{hd}\nolimits_{\varepsilon
}(\rho,\sigma)$ between two elements $\sigma,\tau\in(\mathbb{F}_{3})^{4},$ as
defined by%
\begin{equation*}
\operatorname*{hd}\nolimits_{\varepsilon}(\rho,\sigma)=\operatorname{wt}%
_{\varepsilon}(\rho-\sigma). \label{GF(3)Hamming distance}%
\end{equation*}

\begin{remark}
If $\rho,\sigma$ are belong to the same row in (\ref{XiXi* alpha ... delta*})
observe that $\operatorname*{hd}_{\varepsilon}(\rho,\sigma)=2,$ while if
$\rho,\sigma$ belong to different rows then $\operatorname*{hd}_{\varepsilon
}(\rho,\sigma)$ is odd.
\end{remark}

The next lemma demonstrates that some aspects of orthogonality in the
$\operatorname*{GF}(2)$ space $\operatorname*{PG}(7,2)$ can be neatly dealt
with in $\operatorname*{GF}(3)$ terms.

\begin{lemma}
\label{Lem p.p}Two points $p_{\rho},p_{\sigma}\in\omega_{4}$ are orthogonal or
non-orthogonal according as $\operatorname*{hd}_{\varepsilon}(\rho,\sigma)$ is
even or odd.
\end{lemma}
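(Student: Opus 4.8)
The plan is to compute the symplectic product $p_\rho \cdot p_\sigma = B(A_\rho u, A_\sigma u)$ directly in terms of the $\operatorname{GF}(3)$-coordinates $\rho, \sigma \in (\mathbb{F}_3)^4$, exploiting the block-diagonal structure. Since $A_\sigma = (\zeta_a)^{\sigma_1} \oplus (\zeta_b)^{\sigma_2} \oplus (\zeta_c)^{\sigma_3} \oplus (\zeta_d)^{\sigma_4}$ and $u = u_a(0) \oplus u_b(0) \oplus u_c(0) \oplus u_d(0)$ decomposes along $V_8 = V_a \perp V_b \perp V_c \perp V_d$, the form $B$ being orthogonal on these blocks gives
\begin{equation*}
p_\rho \cdot p_\sigma = \sum_{h\in\{a,b,c,d\}} B_h\big((\zeta_h)^{\rho_h} u_h(0),\, (\zeta_h)^{\sigma_h} u_h(0)\big),
\end{equation*}
where $B_h$ is the restriction of $B$ to the hyperbolic plane $V_h$ and the index $h$ ranges with $\rho_a = \rho_1$, etc. So everything reduces to a single $2$-dimensional computation repeated four times.

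First I would fix one hyperbolic plane, say $V_a$ with $u_a(0)$ the unit vector, and note that by Theorem~\ref{Thm H_7 = 54 + 81} the three points $u_a(0), u_a(1), u_a(2)$ of $L_a$ are external to $\mathcal{Q}$, hence pairwise non-orthogonal: any two distinct points of a line in $\operatorname{PG}(1,2)$ are the two vectors other than the apex, and since $L_a$ is hyperbolic, $u_a(i) \cdot u_a(j) = 1$ for $i \neq j$ while $u_a(i)\cdot u_a(i) = 0$. Writing $m_h := (\zeta_h)^{\rho_h} u_h(0)$ and $n_h := (\zeta_h)^{\sigma_h} u_h(0)$, these are both points of $L_h$, so the local contribution $B_h(m_h, n_h)$ equals $1$ precisely when $m_h \neq n_h$, i.e. when $\rho_h \neq \sigma_h$ (using that $\zeta_h$ has order $3$ and acts as a $3$-cycle on the points of $L_h$), and equals $0$ when $\rho_h = \sigma_h$. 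Therefore
\begin{equation*}
p_\rho \cdot p_\sigma = \#\{\, h \in \{a,b,c,d\} : \rho_h \neq \sigma_h \,\} \bmod 2 = \operatorname{wt}_\varepsilon(\rho - \sigma) \bmod 2 = \operatorname*{hd}\nolimits_\varepsilon(\rho,\sigma) \bmod 2,
\end{equation*}
which is exactly the claim: orthogonal iff this count is even.

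The only genuinely substantive point — and the one I'd be most careful about — is the orthogonality of $B$ on the four blocks together with the identification that the unit point $u = U_{0000}$ really does decompose as $u_a(0)\oplus u_b(0)\oplus u_c(0)\oplus u_d(0)$; both are guaranteed by the setup, the former by the defining property that $V_a, V_b, V_c, V_d$ are pairwise orthogonal hyperbolic planes for $B$ (stated just before Eq.~(\ref{V_8 = <1,8> + ...})), and the latter by the explicit choice ``$U_{0000}$ is the unit point $u$'' made after Eq.~(\ref{zeta-a, ... ,zeta_d}), combined with $\theta_u(ijkl) = U_{ijkl}$. Given those, the per-block analysis is routine, so there is no real obstacle; the lemma is essentially a bookkeeping consequence of the orthogonal direct-sum structure plus the fact that each $L_h$ is a hyperbolic (external) line. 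One small check worth spelling out is that $\rho_h \neq \sigma_h$ in $\mathbb{F}_3$ does force $m_h \neq n_h$ in $L_h$: this holds because $\langle \zeta_h \rangle \cong Z_3$ acts simply transitively on the three points of $L_h$, so distinct exponents give distinct images.
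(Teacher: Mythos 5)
Your proof is correct and is essentially the paper's own argument: decompose $B$ along the orthogonal blocks $V_a\perp V_b\perp V_c\perp V_d$, observe that two points $u_h(i),u_h(j)$ of the hyperbolic line $L_h$ satisfy $u_h(i)\cdot u_h(j)=1+\delta_{ij}$, and sum over the four coordinates to get the parity of $\operatorname{wt}_\varepsilon(\rho-\sigma)$. (One cosmetic remark: the step ``external to $\mathcal{Q}$, hence pairwise non-orthogonal'' is a non-sequitur, but it is harmless since your subsequent appeal to $V_h$ being a hyperbolic symplectic plane is the correct and sufficient justification.)
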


\begin{proof}
For two points $u_{h}(i),u_{h}(j)\in L_{h}$ we have $u_{h}(i)\cdot u_{h}%
(j)=1+\delta_{ij}.$ Hence if $\rho=ijkl$ and $\sigma=i^{\prime}j^{\prime
}k^{\prime}l^{\prime}$ it follows that $p_{\rho}\cdot p_{\sigma}=\delta
_{ii^{\prime}}+\delta_{jj^{\prime}}+\delta_{kk^{\prime}}+\delta_{ll^{\prime}%
},$ whence the stated result.
\end{proof}

A description of the different sections of a Segre variety $\mathcal{S}%
\subset\omega_{4}$ can sometimes be helped by the use of the alternative basis
$\mathcal{B}_{\Xi}:=\{\beta,\gamma,\delta,\alpha\}$ for $V(4,3)=(\mathbb{F}%
_{3})^{4}.$ Here, as in (\ref{XiXi* alpha ... delta*}), $\mathcal{\beta
}=1221,~\mathcal{\gamma}=2121,~\mathcal{\delta}=2211,~\mathcal{\alpha}=1111.$
The change of basis equations are therefore:%
\begin{align}
  \beta = {}   &   \varepsilon_{1}  -\varepsilon_{2}  -\varepsilon_{3}  +\varepsilon_{4},&
  \gamma ={}   &  -\varepsilon_{1}  +\varepsilon_{2}  -\varepsilon_{3}  +\varepsilon_{4},\nonumber\\
%%%%%%%%%%%%%%%%%%%%%%%%%%%%%%%%%%%%%%%%%%%%%%%
  \delta ={} &   -\varepsilon_{1}  -\varepsilon_{2} +\varepsilon_{3}  +\varepsilon_{4},&
  \alpha={}&\varepsilon_{1}  +\varepsilon_{2}  +\varepsilon_{3}  +\varepsilon_{4},\nonumber\\
%%%%%%%%%%%%%%%%%%%%%%%%%%%%
  \varepsilon_{1}={}  &   \beta -\gamma-\delta+\alpha,&
  \varepsilon_{2}={}  & -\beta+\gamma-\delta+\alpha,\nonumber\\
%%%%%%%%%%%%%%
    \varepsilon_{3} ={}&-\beta-\gamma+\delta+\alpha,&
    \varepsilon_{4}  ={}&\beta+\gamma+\delta+\alpha.
\label{change of basis relations}%
\end{align}
Observe that the chosen ordering of the elements of the basis $\mathcal{B}%
_{\Xi}$ results in the change of basis matrix $\mathbf{M}$ having the simple
properties $\mathbf{M}^{\text{t}}=\mathbf{M}=\mathbf{M}^{-1}.$

So in $V(4,3)$ we now have available the Hamming distance in the basis
$\mathcal{B}_{\Xi},$ namely
\begin{equation*}
\operatorname*{hd}\nolimits_{\Xi}(\rho,\sigma):=\operatorname{wt}_{\Xi}%
(\rho-\sigma),\quad\rho,\sigma\in(\mathbb{F}_{3})^{4},
\label{Hamming distance using Xi}%
\end{equation*}
where $\operatorname{wt}_{\Xi}(\lambda)$ denotes the weight of an element
$\lambda\in V(4,3)$ in the basis $\mathcal{B}_{\Xi}.$

\begin{lemma}
\label{Lem Two bases Epsilon and Xi}%
\begin{align}
\operatorname{wt}_{\Xi}(\lambda)  &  =2\Longleftrightarrow\operatorname{wt}%
_{\varepsilon}(\lambda)=2,~~~\operatorname{wt}_{\Xi}(\lambda
)=3\Longleftrightarrow\operatorname{wt}_{\varepsilon}(\lambda)=3,\nonumber\\
\operatorname{wt}_{\Xi}(\lambda)  &  =1\Longleftrightarrow\operatorname{wt}%
_{\varepsilon}(\lambda)=4,~~~\operatorname{wt}_{\Xi}(\lambda
)=4\Longleftrightarrow\operatorname{wt}_{\varepsilon}(\lambda)=1.
\label{Two weight results}%
\end{align}

\end{lemma}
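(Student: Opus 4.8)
The plan is to prove the four claimed equivalences by exploiting the self-inverse change-of-basis matrix $\mathbf{M}$ recorded in (\ref{change of basis relations}), which satisfies $\mathbf{M}^{\mathrm{t}}=\mathbf{M}=\mathbf{M}^{-1}$ and has every entry equal to $\pm 1$. Because $\mathbf{M}=\mathbf{M}^{-1}$, passing from $\mathcal{B}_\varepsilon$-coordinates to $\mathcal{B}_\Xi$-coordinates is exactly the same linear operation as passing back, so the statement is symmetric in $\operatorname{wt}_\varepsilon$ and $\operatorname{wt}_\Xi$ except for the labelling; in particular it suffices to understand, for each $w\in\{1,2,3,4\}$, which values $\operatorname{wt}_\varepsilon(\lambda)$ can take as $\lambda$ ranges over the vectors with $\operatorname{wt}_\Xi(\lambda)=w$ (equivalently, over $\mathbf{M}$ applied to the $\varepsilon$-weight-$w$ vectors). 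I would first dispose of the extreme cases $w=1$ and $w=4$ by direct inspection: if $\operatorname{wt}_\Xi(\lambda)=1$ then $\lambda=\pm\beta,\pm\gamma,\pm\delta$ or $\pm\alpha$, and by (\ref{XiXi* alpha ... delta*})/(\ref{change of basis relations}) each of these has all four $\varepsilon$-coordinates nonzero, i.e.\ $\operatorname{wt}_\varepsilon(\lambda)=4$; by the $\mathbf{M}=\mathbf{M}^{-1}$ symmetry the converse holds, giving $\operatorname{wt}_\Xi(\lambda)=4\Leftrightarrow\operatorname{wt}_\varepsilon(\lambda)=1$ as well.

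For the middle cases $w=2$ and $w=3$ the cleanest route is a parity/counting argument rather than brute enumeration. Since $V(4,3)$ has $3^4=81$ vectors and exactly $8$ of them have $\operatorname{wt}_\varepsilon=1$ (namely $\pm\varepsilon_i$) and likewise $8$ have $\operatorname{wt}_\varepsilon=4$, the remaining $81-1-8-8=64$ nonzero vectors split into those with $\operatorname{wt}_\varepsilon\in\{2,3\}$; the counts are $\binom{4}{2}2^2=24$ of weight $2$ and $\binom{4}{3}2^3=32$ of weight $3$. The same three counts $8,24,32,8$ hold in the $\Xi$-basis. The first part of the lemma already establishes a bijection between the weight-$1$ classes and the weight-$4$ classes in the two bases (in crossed fashion); what remains is to show $\mathbf{M}$ carries $\{\operatorname{wt}_\varepsilon=2\}$ into $\{\operatorname{wt}_\Xi=2\}$ and $\{\operatorname{wt}_\varepsilon=3\}$ into $\{\operatorname{wt}_\Xi=3\}$. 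Because $\mathbf{M}$ is a bijection and the class sizes $24,32$ agree, it is enough to rule out the ``crossed'' possibility, i.e.\ to exhibit that no weight-$2$ $\varepsilon$-vector maps to a weight-$3$ $\Xi$-vector (equivalently, to nail down the image of a single representative and then invoke the $\mathcal{G}(\mathcal{L}_4)$-action, which permutes the $\varepsilon_i$ and hence acts transitively on weight-$2$ vectors up to sign).

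Concretely, I would compute $\mathbf{M}(\varepsilon_1+\varepsilon_2)$ and $\mathbf{M}(\varepsilon_1-\varepsilon_2)$ using (\ref{change of basis relations}): $\varepsilon_1+\varepsilon_2=(\beta-\gamma-\delta+\alpha)+(-\beta+\gamma-\delta+\alpha)=-2\delta+2\alpha=\delta+\alpha$ (mod $3$, using $-2=1$), which has $\Xi$-weight $2$; and $\varepsilon_1-\varepsilon_2=2\beta-2\gamma=-\beta+\gamma$, again of $\Xi$-weight $2$. Since $\mathcal{G}(\mathcal{L}_4)$ acts on $V(4,3)$ permuting $\{\varepsilon_1,\varepsilon_2,\varepsilon_3,\varepsilon_4\}$ and scaling, every weight-$2$ $\varepsilon$-vector is equivalent to one of these two, so all of them have $\Xi$-weight $2$; this forces $\{\operatorname{wt}_\varepsilon=2\}\subseteq\{\operatorname{wt}_\Xi=2\}$, and equality of cardinalities then gives the equivalence, whereupon the remaining $32$ vectors of $\varepsilon$-weight $3$ must be exactly the $32$ of $\Xi$-weight $3$. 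The main obstacle is purely bookkeeping: making sure the mod-$3$ arithmetic ($-2\equiv 1$, $2\equiv -1$) is handled consistently so that the computed images genuinely land in the claimed weight classes; there is no conceptual difficulty once the self-inverse symmetry of $\mathbf{M}$ and the $\mathcal{G}(\mathcal{L}_4)$-transitivity on the weight-$2$ vectors are in hand.
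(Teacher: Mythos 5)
Your counting step contains an error that turns out to matter: the number of vectors of weight $4$ in $(\mathbb{F}_{3})^{4}$ is $\binom{4}{4}2^{4}=16$, not $8$ (the set $\Xi\cup\Xi^{\ast}$ has $8$ elements, but the full set of $\varepsilon$-weight-$4$ vectors is $\pm(\Xi\cup\Xi^{\ast})$). Consequently your inclusion $\{\operatorname{wt}_{\Xi}=1\}\subseteq\{\operatorname{wt}_{\varepsilon}=4\}$ is an inclusion of an $8$-set in a $16$-set and does \emph{not} upgrade to equality. In fact the two ``extreme'' biconditionals cannot be proved, because they are false as stated: take $\lambda=2111=\beta^{\ast}$. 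Then $\operatorname{wt}_{\varepsilon}(\lambda)=4$, while from (\ref{change of basis relations}) one computes $2111=\beta+2\gamma+2\delta+2\alpha$, so $\operatorname{wt}_{\Xi}(\lambda)=4$ as well; the same happens for every $\lambda\in\pm\Xi^{\ast}$. Only the implications $\operatorname{wt}_{\Xi}(\lambda)=1\Rightarrow\operatorname{wt}_{\varepsilon}(\lambda)=4$ and $\operatorname{wt}_{\varepsilon}(\lambda)=1\Rightarrow\operatorname{wt}_{\Xi}(\lambda)=4$ hold (the $16$ vectors of $\Xi$-weight $4$ are the $8$ vectors $\pm\varepsilon_{i}$ together with the $8$ vectors $\pm\Xi^{\ast}$). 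Your method, carried out with the correct count, would have exposed this; note that the paper only ever invokes the weight-$2$ and weight-$3$ equivalences later on, and those are true.

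For the weight-$2$ and weight-$3$ parts your plan is workable but needs two repairs. First, the symmetry argument must use only symmetries that preserve \emph{both} weights: permutations of the coordinates $\xi_{1},\dots,\xi_{4}$ do permute $\pm\{\beta,\gamma,\delta,\alpha\}$ and hence preserve $\operatorname{wt}_{\Xi}$, as does global multiplication by $-1$, but an individual sign change does not (it sends $\alpha=1111$ to $2111=\beta^{\ast}$, of $\Xi$-weight $4$). With permutations and global scaling the weight-$2$ vectors split into exactly the two classes you chose representatives for, so that part closes up (modulo the slip $\varepsilon_{1}+\varepsilon_{2}=\delta+2\alpha$, not $\delta+\alpha$; the weight is still $2$). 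Second, the concluding ``the remaining $32$ vectors must match up'' step no longer follows once the weight-$4$ count is corrected, since $8$ of the $16$ vectors of $\varepsilon$-weight $4$ also remain unassigned; the cleanest fix is to repeat the two-representative computation for weight $3$ (e.g.\ $\varepsilon_{1}+\varepsilon_{2}+\varepsilon_{3}=2\beta+2\gamma+2\delta$ and $\varepsilon_{1}+\varepsilon_{2}+2\varepsilon_{3}=\beta+\gamma+\alpha$, both of $\Xi$-weight $3$) and then use the equal cardinalities $32=32$. For comparison, the paper's own proof is a one-line appeal to direct inspection of (\ref{change of basis relations}); your orbit-plus-counting route is genuinely different and, done correctly, has the virtue of revealing that the lemma's third and fourth equivalences should be weakened to one-way implications.
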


\begin{proof}
The results (\ref{Two weight results}) follow immediately from
(\ref{change of basis relations}).
\end{proof}

%So we now also have at our disposal the weight pattern $\pi_{\Xi}(L)$ in the
%basis $\mathcal{B}_{\Xi}$ of a line $L\subset\operatorname*{PG}(3,3).$ From
%the foregoing lemma we see that%
%\begin{equation}
%\pi_{\Xi}(L)=(n_{4},n_{2},n_{3},n_{1})\Longleftrightarrow\pi_{\varepsilon
%}(L)=(n_{1},n_{2},n_{3},n_{4}). \label{Two weight patterns}%
%\end{equation}

Let us also define the Hamming distance $\operatorname*{hd}(p_{\rho}%
,p_{\sigma})$ between two points $p_{\rho},p_{\sigma}\in\omega_{4}$ to be%
\begin{equation*}
\operatorname*{hd}(p_{\rho},p_{\sigma}):=\operatorname*{hd}\nolimits_{\Xi
}(\rho,\sigma). \label{hd for points}%
\end{equation*}
Observe that this definition does not depend upon the choice of the point $u$
used in the bijective correspondence $\theta_{u}$ in (\ref{Def theta_u}), due
to the invariance of $\operatorname*{hd}\nolimits_{\Xi}$ under translations:
$\operatorname*{hd}\nolimits_{\Xi}(\lambda+\rho,\lambda+\sigma
)=\operatorname*{hd}_{\Xi}(\rho,\sigma).$ Suppose that we confine our attention
to points $p,p^{\prime},\ldots$ on a particular Segre variety in $\omega_{4},$
say $\mathcal{S=S}_{\beta\gamma\delta}$ in (\ref{9+9+9}). Then observe that
$\operatorname*{hd}(p,p^{\prime})=d,~d\in\{1,2,3\},$ provided that $p^{\prime}$
can be obtained from $p$ only by the use of at least $d$ of
the generating subgroups $\langle A_{\beta}\rangle,\langle A_{\gamma}%
\rangle,\langle A_{\delta}\rangle$ of $\mathcal{S}.$ In particular distinct
points $p$ and $p^{\prime}$ lie on the same generator of $\mathcal{S}$ if and
only if $\operatorname*{hd}(p,p^{\prime})=1.$ This Hamming distance on
$\mathcal{S}$ appears also in \cite{HavlicekOdehnalSaniga}.

\begin{definition}
(i) A \emph{troika }on a given $\mathcal{S}_{3}(2)$ variety $\mathcal{S}$ in
$\omega_{4}$ is a set of three points of $\mathcal{S}$ which are
$\operatorname*{hd}\nolimits_{\Xi}=3$ apart.

(ii) The \emph{centre} of a troika $t=\{p_{1},p_{2},p_{3}\}$ is the point
$c(t)=p_{1}+p_{2}+p_{3}.$
\end{definition}

For example, the three points $u,246u,357u\in\mathcal{S}_{\beta\gamma\delta}$
in (\ref{9+9+9}) are $\operatorname*{hd}\nolimits_{\Xi}=3$ apart and so form a
troika. Alternatively this follows from (\ref{Two weight results}) since
$0000,0111,0222$ in (\ref{GF(3) 9+9+9}) are $\operatorname*{hd}_{\varepsilon
}=3$ apart.

\begin{theorem}
\label{Thm fans, troikas, and L_4}(i) A fan $\mathcal{F}$ for a Segre variety
$\mathcal{S}$ in $\omega_{4}$ can be uniquely expressed as the union
$\mathcal{F}=t\cup t^{\prime}\cup t^{\prime\prime}$ of a triplet of troikas.

(ii) The three troikas $t,t^{\prime},t^{\prime\prime}$ in $\mathcal{F}$ share
the same centre, say $c_{\mathcal{F}}.$

(iii) Moreover a fan $\mathcal{F}$ for $\mathcal{S}$ determines uniquely a
triplet $\mathcal{T}=\{\mathcal{F},\mathcal{F}^{\prime},\mathcal{F}%
^{\prime\prime}\}$ of fans such that $\mathcal{F}\cup\mathcal{F}^{\prime}%
\cup\mathcal{F}^{\prime\prime}=\mathcal{S}.$ Further, $L(\mathcal{T}%
):=\{c_{\mathcal{F}},c_{\mathcal{F}^{\prime}},c_{\mathcal{F}^{\prime\prime}%
}\}$ is one of the lines of the $\mathcal{H}_{7}$-tetrad $\mathcal{L}_{4}.$
\end{theorem}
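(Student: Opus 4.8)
The plan is to work entirely in the $\operatorname{GF}(3)$ picture via $\theta_u$ and exploit Lemma~\ref{Lem Two bases Epsilon and Xi}, which translates $\operatorname{hd}_\Xi$-conditions into $\operatorname{wt}_\varepsilon$-conditions. First I would pin down what a fan $\mathcal{F}\subset\mathcal{S}$ corresponds to on the $\operatorname{GF}(3)$ side. Since $\mathcal{S}=\mathcal{S}_{\beta\gamma\delta}=\theta_u(V_3)$ with $V_3={\prec}\beta,\gamma,\delta{\succ}$, and two points of $\mathcal{S}$ lie on a common generator iff their $\operatorname{GF}(3)$-difference has $\operatorname{wt}_\varepsilon=2$ or (equivalently by Lemma~\ref{Lem Two bases Epsilon and Xi}) $\operatorname{wt}_\Xi=2$, a fan corresponds to a $9$-set $W\subset V_3$ all of whose pairwise differences have $\operatorname{wt}_\varepsilon\in\{3,4\}$. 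From the proof of Lemma~\ref{Lem 13 lines in P of kind P_0}, the three ``fan-type'' planes $V_2\subset V_3$ are those of weight pattern $(0,3,1,0)$ (kind $\Lambda_3$); the key observation is that such a $V_2$ is a $2$-flat whose four nonzero vectors have $\operatorname{wt}_\varepsilon\in\{2,3\}$ — this looks wrong, so the fan is \emph{not} a linear $V_2$ but rather an affine coset, or more precisely the $9$-set $W$ is a union of three $\operatorname{hd}_\varepsilon=3$-equidistant triples. I would reconcile this by checking directly on the displayed example: the underlined $9$-set in section~(iii) satisfies $\xi_4=0$, and computing pairwise $\operatorname{hd}_\varepsilon$ within it shows every difference has weight $3$ — wait, $0000$ and $1200$ differ by weight $2$. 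So the correct statement is that $W=\{\xi\in V_3:\xi_4=0\}$ and the Hamming-$3$ structure partitions $W$ into troikas; I would verify that the three rows (or the three $\langle T_\delta\rangle$-orbits, suitably read) of the displayed $9$-set are exactly the $\operatorname{hd}_\varepsilon=3$-equidistant triples.

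\textbf{Part (i): existence and uniqueness of the troika decomposition.} Given the fan $\mathcal{F}\leftrightarrow W$, I would show $W$ carries a unique partition into three troikas. The idea: within $W$ the relation ``$\operatorname{hd}_\Xi=3$'' (equivalently by Lemma~\ref{Lem Two bases Epsilon and Xi}, $\operatorname{hd}_\varepsilon=3$) is the complement, inside the ``far-apart'' graph on $\mathcal{F}$, of the relation ``$\operatorname{hd}=2$''. Since $\mathcal{F}$ is a fan, no two of its points share a generator ($\operatorname{hd}\neq1$), so every pair has $\operatorname{hd}\in\{2,3\}$. Counting: in the displayed example one checks the $\operatorname{hd}=3$ graph on the $9$ points is a disjoint union of three triangles (a perfect ``troika-matching''), and this is a property of the configuration that is preserved under the action of $\mathcal{G}_{81}$ and $\operatorname{Sym}(4)$, hence holds for every fan of every Segre in $\omega_4$. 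Uniqueness then follows because a disjoint union of three triangles has a unique decomposition into its connected components. I would spell this out by exhibiting, for the canonical fan, the three troikas explicitly (e.g.\ using the $9+9+9$ display) and invoking transitivity.

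\textbf{Part (ii): common centre.} For the canonical fan I would compute $c(t)=p_1+p_2+p_3$ for each of the three troikas directly in $V_8$ using the coordinate shorthand of~(\ref{9+9+9}), and observe that all three sums equal the same vector — which, crucially, should turn out to lie in one of the $V_h$ (that is, on one of $L_a,L_b,L_c,L_d$). The cleanest way: in the $\operatorname{GF}(3)$ coordinates a troika is a coset-like triple $\{\mu,\mu+\nu,\mu+2\nu\}$ with $\operatorname{wt}_\varepsilon(\nu)=3$; but $p_\mu+p_{\mu+\nu}+p_{\mu+2\nu}$ is \emph{not} linear in the $\operatorname{GF}(3)$ parameter, so I must compute it as an actual sum of three vectors in $V_8=\mathbb{F}_2^8$. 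Using $u_h(0)+u_h(1)+u_h(2)=0$ on the three coordinates where $\nu$ is nonzero, and $=u_h(i)$ (constant, tripled $=u_h(i)$ since we are in characteristic $2$, so actually $=u_h(i)$) on the one coordinate where $\nu$ vanishes, one finds $c(t)=(0,0,0,u_h(i))\in L_d$ (for the $\xi_4=0$ fan). Since all three troikas of the fan $W=\{\xi_4=0\}$ share the same ``missing coordinate'' $\varepsilon_4$ but have a common value of $\mu$ on that coordinate only if $W$ is a single coset — here $W$ is the \emph{linear} subspace $\{\xi_4=0\}$, so $\mu_4=0$ for all, giving the common centre $c_{\mathcal{F}}=(0,0,0,u_d(0))=U_{\emptyset\emptyset\emptyset 0}\in L_d$. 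This is the heart of the computation and the main obstacle: getting the characteristic-$2$ arithmetic of these triple sums exactly right, and correctly identifying that the three troikas of one fan all have $\mu$ agreeing on the missing coordinate.

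\textbf{Part (iii): the triplet of fans and the line of $\mathcal{L}_4$.} The three fans $\mathcal{F},\mathcal{F}',\mathcal{F}''$ partitioning $\mathcal{S}$ correspond (for the chosen ``direction'' $\varepsilon_4$) to the three cosets $W_j=\{\xi\in V_3:\xi_4=j\}$, $j\in\mathbb{F}_3$; these visibly partition $V_3=\theta_u^{-1}(\mathcal{S})$, giving $\mathcal{F}\cup\mathcal{F}'\cup\mathcal{F}''=\mathcal{S}$. By the Part~(ii) computation, $c_{\mathcal{F}_j}=(0,0,0,u_d(j))=U_{\emptyset\emptyset\emptyset j}$, so $L(\mathcal{T})=\{U_{\emptyset\emptyset\emptyset 0},U_{\emptyset\emptyset\emptyset 1},U_{\emptyset\emptyset\emptyset 2}\}=L_d\in\mathcal{L}_4$. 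For uniqueness of the triplet $\mathcal{T}$ determined by $\mathcal{F}$: a given fan $\mathcal{F}$ is a $9$-subset of $\mathcal{S}$ of type $\Lambda_3$-direction, and the only way to complete it to a partition of $\mathcal{S}$ by fans of the same $\mathcal{G}(\mathcal{L}_4)$-type is to take the two remaining cosets in that direction — I would argue this by noting that the ``fan direction'' (the coordinate $\varepsilon_h$ on which $W$ is constant) is recovered from $\mathcal{F}$ as the common centre's support, so $\mathcal{F}$ determines $h$, hence determines $W_0,W_1,W_2$ up to which is which, hence determines $\mathcal{T}$ as an unordered triplet. Finally, since there are four choices of direction $h\in\{a,b,c,d\}$ and each gives one triplet of fans with $L(\mathcal{T})=L_h$, every line of $\mathcal{L}_4$ arises, completing the proof. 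The only remaining care is to confirm, via the $\operatorname{Sym}(4)$-action and $\mathcal{G}_{81}$-transitivity on $\omega_4$, that the analysis of the single canonical Segre $\mathcal{S}_{\beta\gamma\delta}$ with the single canonical fan suffices for all fans of all Segres in $\omega_4$.
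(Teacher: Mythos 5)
Your overall architecture (reduce to the canonical Segre $\mathcal{S}_{\beta\gamma\delta}$, recognise troikas as $\operatorname{hd}=3$ triples in the $\operatorname{GF}(3)$ picture, compute centres via $(I+A_{\lambda}+A_{2\lambda})$ annihilating the components where $\lambda\neq0$, and read off $L(\mathcal{T})=L_{d}$ from the three cosets $\xi_{4}=j$) is essentially the paper's, and your parts (ii) and (iii) are sound. But there is a genuine error at the foundation of part (i) which you notice (``this looks wrong'', ``wait, $0000$ and $1200$ differ by weight $2$'') and never actually repair. Two points of $\mathcal{S}$ lie on a common generator if and only if their $\operatorname{GF}(3)$-difference is one of $\pm\beta,\pm\gamma,\pm\delta$, i.e.\ has $\operatorname{wt}_{\Xi}=1$, equivalently $\operatorname{wt}_{\varepsilon}=4$ --- not $\operatorname{wt}_{\varepsilon}=2$ as you assert. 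Hence a fan containing $u$ corresponds to a $9$-set $W\subset V_{3}$ all of whose pairwise differences have $\operatorname{wt}_{\varepsilon}\in\{2,3\}$, avoiding weight $4$ --- the opposite of your ``$\{3,4\}$''. With the correct criterion the confusion you ran into evaporates: such a $W$ containing $0000$ is exactly a $2$-dimensional subspace $V_{2}\subset V_{3}$ whose projective line contains no weight-$4$ point, i.e.\ (by Lemma \ref{Lem 13 lines in P of kind P_0}) is of kind $\Lambda_{3}$ with weight pattern $(0,3,1,0)$. That pattern hands you part (i) immediately and structurally: $L=\mathbb{P}V_{2}$ has a \emph{unique} point $\langle\lambda\rangle$ of weight $3$, so the only elements of $V_{2}$ at Hamming distance $3$ from a given one are its translates by $\pm\lambda$, and the unique troika decomposition is the decomposition of $V_{2}$ into cosets of ${\prec}\lambda{\succ}$. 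This replaces your plan of ``check that the $\operatorname{hd}=3$ graph is three disjoint triangles on the displayed example and invoke transitivity'', which is logically shaky as well as less informative: transitivity of $\mathcal{G}(\mathcal{L}_{4})$ on the set of \emph{all} fans of all Segres in $\omega_{4}$ cannot be assumed before the fans have been classified, and classifying them is precisely the step your incorrect generator criterion sabotages.

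Once that is fixed, the remainder goes through as you describe and as in the paper: since $\operatorname{wt}_{\varepsilon}(\lambda)=3$, exactly one $\varepsilon$-coordinate is zero on all of $V_{2}$, say $\xi_{4}=0$, so every point of the fan has the form $U_{ijkm}$ with the same fourth index $m$, and every troika of $\mathcal{F}$ has centre $(I+A_{\lambda}+A_{2\lambda})U_{ijkm}=U_{\emptyset\emptyset\emptyset m}\in L_{d}$; the three cosets $\xi_{4}=0,1,2$ of $V_{2}$ in $V_{3}$ then give the unique triplet of fans, whose centres fill out $L_{d}$. Your characteristic-$2$ bookkeeping here ($u_{h}(0)+u_{h}(1)+u_{h}(2)=0$ on the moved components, $3u_{h}(i)=u_{h}(i)$ on the fixed one) is exactly right and is the heart of the paper's own computation.
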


\begin{proof}
Suppose that $\mathcal{S}=\theta_{u}(V_{3})$ and that $\mathcal{F}%
\subset\mathcal{S}$ is a fan which contains $u.$

(i) So $\mathcal{F}=\theta_{u}(V_{2}),$ $V_{2}\subset V_{3},$ where the
projective line $L=\mathbb{P}V_{2}$ is of kind $\Lambda_{3},$ with line pattern
$(0,3,1,0).$ Consequently $L$ has a unique point $\langle\lambda\rangle$ with
$\operatorname{wt}(\lambda)=3.$ Hence the element $0000\in(\mathbb{F}_{3}%
)^{4}$ has the unique extension $V_{1}:=\{0000,\lambda,2\lambda\}$ to a
$3$-set of elements of $V_{2}$ which are Hamming distance $3$ apart. So the
point $u$ belongs to a unique troika $t\subset\mathcal{F},$ namely
$t:=\theta_{u}(V_{1})=\{u,A_{\lambda}u,A_{2\lambda}u\}.$ If $V_{2}%
={\prec}\lambda,\mu{\succ},$ the two translates
$T_{\mu}(V_{1}),T_{2\mu}(V_{1})$
of $V_{1}$ in $V_{2}$ yield two other troikas $t^{\prime}=A_{\mu}%
t,t^{\prime\prime}=A_{2\mu}t,$ giving rise to the claimed unique decomposition
$\mathcal{F}=t\cup t^{\prime}\cup t^{\prime\prime}.$

(ii) Since $\operatorname{wt}(\lambda)=3,$ precisely one of the coordinates of
$\lambda$ in the basis $\mathcal{B}_{\varepsilon}$ is zero. First suppose
$\lambda$ satisfies $\xi_{4}=0.$ Then for $\mathbb{P}V_{2}=\mathbb{P}%
({\prec}\lambda,\mu{\succ})$ to be of kind $\Lambda_{3}$ the element $\mu$ must
also satisfy $\xi_{4}=0$. So a point $p\in\mathcal{F}$ must be of the form
$U_{ijk0}$. Hence that troika $\{p,A_{\lambda}p,A_{2\lambda
}p\}\subset\mathcal{F}$ which contains $p$ has centre%
\begin{equation}
c=(I+A_{\lambda}+(A_{\lambda})^{2})U_{ijk0}=U_{\emptyset\emptyset\emptyset0}.
\label{common centre c if xi_4 = 0}%
\end{equation}
So the same point $c=U_{\emptyset\emptyset\emptyset0},$ which lies on the line
$L_{d}\in\mathcal{L}_{4},$ is the centre of each of the three troikas in
$\mathcal{F}.$ Of course, if instead $\lambda$ satisfies $\xi_{i}=0$ for
$i=1,2,3$ then the analogous reasoning shows that the common centre of the
three troikas in $\mathcal{F}$ is $U_{0\emptyset\emptyset\emptyset}\in
L_{a},~U_{\emptyset0\emptyset\emptyset}\in L_{b}$ or $U_{\emptyset
\emptyset0\emptyset}\in L_{c},$ according as $i=1,2$ or $3.$

(iii) We are dealing with $\mathcal{S}=\theta_{u}(V_{3})$ where $V_{3}$ is of
the form $V_{3}=V_{2}
\oplus{{\prec}\nu{\succ}},~V_{2}={{\prec}\lambda,\mu{\succ}},$ and where may
choose $\nu$ to have $\operatorname{wt}_{\varepsilon}=4.$ The fan
$\mathcal{F}=\theta_{u}(V_{2})$ determines a triplet $\mathcal{T}%
=\{\mathcal{F},\mathcal{F}^{\prime},\mathcal{F}^{\prime\prime}\}$ of fans,
where $\mathcal{F}^{\prime}=A_{\nu}(\mathcal{F})$ and $\mathcal{F}^{\prime
\prime}=A_{2\nu}(\mathcal{F}),$ such that $\mathcal{F}\cup\mathcal{F}^{\prime
}\cup\mathcal{F}^{\prime\prime}=\mathcal{S}.$ Moreover if $c_{\mathcal{F}%
}=U_{\emptyset\emptyset\emptyset0}$ as in (\ref{common centre c if xi_4 = 0})
then $c_{\mathcal{F}^{\prime}}=A_{\nu}c_{\mathcal{F}},$~ $c_{\mathcal{F}%
^{\prime\prime}}=A_{2\nu}c_{\mathcal{F}}$ will be the other two points
$U_{\emptyset\emptyset\emptyset1},$~$U_{\emptyset\emptyset\emptyset2}$ of the
line $L_{d}.$ Similarly for the other three cases considered in (ii) above,
where $\{c_{\mathcal{F}},c_{\mathcal{F}^{\prime}},c_{\mathcal{F}^{\prime
\prime}}\}$ is one of the other lines of the tetrad $\mathcal{L}_{4}.$
\end{proof}

In the paper \cite{AspectsSegreinDCC} the fact that a Segre variety
$\mathcal{S}=\mathcal{S}_{3}(2)$ in $\operatorname*{PG}(7,2)$ determines a
distinguished tetrad $\mathcal{L}_{4}$ of lines which span $\operatorname*{PG}%
(7,2)$ only emerged rather late, see \cite[Section 4.1]{AspectsSegreinDCC}.
One of the motivations for the present paper was to come to a clearer
understanding of the relationship between $\mathcal{S}$ and $\mathcal{L}_{4}.$
This can now be achieved: see the next theorem, where it is shown how to
obtain the same tetrad $\mathcal{L}_{4}$ from any of the $24$ copies of a
Segre variety $S_{3}(2)$ in the $81$-set $\omega_{4}.$

\begin{theorem}
A Segre variety $\mathcal{S}$ in $\omega_{4}$ determines precisely four
triplets $\mathcal{T}_{i},~i\in(1,2,3,4)$ of fans; further the resulting four
lines $L_{i}:=L(\mathcal{T}_{i})$ are the four lines of the $\mathcal{H}_{7}%
$-tetrad $\mathcal{L}_{4}$.
\end{theorem}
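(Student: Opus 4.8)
The plan is to work entirely on the $\operatorname{GF}(3)$ side via the bijection $\theta_u$, reducing the statement to a counting fact about the $\mathcal{P}_0$-plane $P = \mathbb{P}V_3 \subset \operatorname{PG}(3,3)$ associated to $\mathcal{S}$. By Theorem~\ref{Thm fans, troikas, and L_4}(iii), each fan $\mathcal{F} \subset \mathcal{S}$ sits inside a unique triplet $\mathcal{T}=\{\mathcal{F},\mathcal{F}',\mathcal{F}''\}$ of fans with $\mathcal{F}\cup\mathcal{F}'\cup\mathcal{F}''=\mathcal{S}$, and $L(\mathcal{T})$ is a line of $\mathcal{L}_4$. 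By Lemma~\ref{Lem 13 lines in P of kind P_0}, the fans in $\mathcal{S}$ through a fixed point (say $u$) correspond to the four lines of kind $\Lambda_3$ through $\langle 0000\rangle$ in $P$, i.e.\ to those $2$-subspaces $V_2 \subset V_3$ whose projectivization has weight pattern $(0,3,1,0)$. Each such $V_2$ determines a decomposition of $V_3$ into three cosets of $V_2$, hence a triplet of fans; so the number of triplets is the number of $\Lambda_3$-lines in $P$ divided by the number of them contained in a single coset-triple --- but since each $\Lambda_3$-line sits in exactly one such triple, the count of fan-triplets equals (number of $\Lambda_3$-lines in $P$)/(number of parallel $\Lambda_3$-lines per triplet).

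First I would pin down this count. In $P$ of kind $\mathcal{P}_0$ there are, by Lemma~\ref{Lem 13 lines in P of kind P_0}, exactly $4$ lines of kind $\Lambda_3$. A triplet of fans corresponds to a direction $\langle\nu\rangle$ with $\operatorname{wt}_\varepsilon(\nu)=4$ (as in the proof of Theorem~\ref{Thm fans, troikas, and L_4}(iii)) together with a $\Lambda_3$-line $L=\mathbb{P}V_2$ with $V_3 = V_2 \oplus \langle\nu\rangle$; two $\Lambda_3$-lines give the same triplet iff they are ``parallel'' in the affine sense modulo the coset structure, i.e.\ iff the corresponding $V_2$ coincide (since $V_2$ is recovered as the common direction space). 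So distinct $V_2$'s give distinct triplets: there are exactly $4$ fan-triplets, matching the claimed count. Then I would invoke part (iii) of Theorem~\ref{Thm fans, troikas, and L_4} to conclude that each of these four triplets $\mathcal{T}_i$ yields a line $L_i = L(\mathcal{T}_i) \in \mathcal{L}_4$.

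It remains to see that the four lines $L_1,L_2,L_3,L_4$ are \emph{distinct}, hence exhaust $\mathcal{L}_4$. Here I would use the explicit description from the proof of Theorem~\ref{Thm fans, troikas, and L_4}(ii)--(iii): a $\Lambda_3$-line $L = \mathbb{P}V_2$ in $P$ has its unique weight-$3$ point $\langle\lambda\rangle$ satisfying exactly one coordinate equation $\xi_r = 0$, and for this line the common troika-centre is the corresponding point $U_{\dots 0 \dots}$ on the line $L_r \in \{L_a,L_b,L_c,L_d\}$; the full triplet then has $L(\mathcal{T}) = L_r$. So the map sending a $\Lambda_3$-line to its associated member of $\mathcal{L}_4$ is determined by which coordinate $\xi_r$ vanishes on its weight-$3$ point. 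Thus I must check that the four $\Lambda_3$-lines of $P$ realize all four values $r \in \{1,2,3,4\}$. Using the normalized model $P = \mathbb{P}\langle\beta,\gamma,\delta\rangle$ of Remark~\ref{Rmk all Xi or all Xi* for kind P_0}, the four $\Lambda_3$-lines are $\mathbb{P}\langle\beta\pm\gamma,\beta\pm\delta\rangle$, whose weight-$3$ points are (by a short direct computation in $(\mathbb{F}_3)^4$) the four vectors each having a single zero coordinate in a distinct position; equivalently, since $V_3$ has equation $\xi_1+\xi_2+\xi_3+\xi_4=0$, a weight-$3$ vector in $V_3$ has its nonzero entries summing to $0$, i.e.\ equal to $\{1,1,1\}$ or $\{2,2,2\}$ with one slot zero, and all four zero-positions occur. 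Hence the four $L_i$ are the four distinct lines $L_a,L_b,L_c,L_d$.

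The only genuine obstacle is the distinctness step, and within it the verification that the four weight-$3$ points of $P$ have their zero coordinate in four \emph{different} positions --- this is where one must actually use that $P$ is of kind $\mathcal{P}_0$ (for other kinds $\mathcal{P}_r$ the zero-positions would repeat or the relevant $\Lambda_3$-lines would not exist, as Theorem~\ref{Thm Only P_0 gives Segres} already signals). Everything else is bookkeeping over the line-type table (\ref{Table 130 lines}) and the already-proved structure of Theorem~\ref{Thm fans, troikas, and L_4}. I would also remark that since the answer is independent of which of the $24$ Segre copies in $\omega_4$ one starts from --- each such $\mathcal{S}$ has the form $\theta_u(V_3)$ for a $\mathcal{P}_0$-plane, all $\mathcal{P}_0$-planes being $\mathcal{G}(\mathcal{L}_4)$-equivalent --- the tetrad $\mathcal{L}_4$ recovered is always the same, which is the point emphasized in the sentence preceding the theorem.
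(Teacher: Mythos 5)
Your proposal is correct and follows essentially the same route as the paper: both work on the $\operatorname{GF}(3)$ side of $\theta_u$, obtain the count of four fan-triplets from the structure of the $\mathcal{P}_0$-plane $P$ (you via the four $\Lambda_3$-lines of Lemma~\ref{Lem 13 lines in P of kind P_0}, the paper equivalently via the four $\pm$-pairs of weight-$3$ vectors in $V_3$, which are in bijection with those lines), and both settle distinctness by observing that the weight-$3$ vectors of $V_3$ have their single zero coordinate in all four positions, so that all four lines $L_a,L_b,L_c,L_d$ occur. The only differences are presentational.
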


\begin{proof}
A Segre variety in $\omega_{4}$ is of the form $\mathcal{S}=\theta_{u}(V_{3})$
where the projective plane $P=\mathbb{P}V_{3}\subset\operatorname*{PG}(3,3)$
is of kind $\mathcal{P}_{0}.$ Now, see (\ref{GF(3) 9+9+9}), there are
precisely $8$ elements of $V_{3}$ of weight $3,$ and these form $4$ pairs, say
$\{\pm\lambda_{1}\},$\ $\{\pm\lambda_{2}\},$\ $\{\pm\lambda_{3}\},$%
\ $\{\pm\lambda_{3}\}.$ Consequently the element $0000\in V_{3}$ has precisely
$4$ extensions, namely $\{0000,\lambda_{i},-\lambda_{i}\},\ i=1,2,3,4,$ to a
$3$-set of elements of $V_{3}$ which are Hamming distance $3$ apart. Hence the
point $u=\theta_{u}(0000)$\ lies in precisely $4$ troikas and, by Theorem
\ref{Thm fans, troikas, and L_4}(i), in precisely $4$ fans $\mathcal{F}%
_{i},\ i=1,2,3,4.$ By Theorem \ref{Thm fans, troikas, and L_4}(iii) each of
the resulting $4$ triplets $\mathcal{T}_{i}=\{\mathcal{F}_{i},\mathcal{F}%
_{i}^{\prime},\mathcal{F}_{i}^{\prime\prime}\}$ of fans determines a line
$L(\mathcal{T}_{i})$ of the tetrad $\mathcal{L}_{4}.$ Further, from the proof
of Theorem \ref{Thm fans, troikas, and L_4}(iii), we see that these lines are distinct.
\end{proof}

\subsection{Future research}

We are of the opinion that some further investigation of the denizens of
$\omega_{4},$ and of their interactions, should prove worthwhile. Moreover such
an investigation should not be confined to the $27$-set denizens arising from
Theorem \ref{Thm 40 subgps}, since at least some of the $9$-set denizens of
$\omega_{4}$ arising from Theorem \ref{Thm 130 subgps} deserve attention. In
particular the $9$-sets in $\omega_{4}$ which arise from those lines in the
table (\ref{Table 130 lines}) which have weight pattern $(0,0,4,0)$ are
certainly noteworthy. For suppose that $V(2,3)\subset V(4,3)$ is such that
$L=\mathbb{P}V(2,3)$ is of kind $\Lambda_{7},$ and so $\operatorname{wt}%
_{\varepsilon}(\rho)=3$ for every nonzero element $\rho\in V(2,3).$ It follows
that any pair of distinct elements $\rho,\sigma$ of $V(2,3)$ are Hamming
distance $3$ apart, and hence, by Lemma \ref{Lem p.p}, the points $p_{\rho
},p_{\sigma}$ are non-perpendicular: $p_{\rho}\cdot p_{\sigma}=1.$ \emph{In
this easy manner we have constructed a }$9$\emph{-cap }$\mathcal{N}=\{p_{\rho
}\}_{\rho\in V(2,3)}$ \emph{on the quadric} $\mathcal{H}_{7}.$ Moreover under
the action on $\mathcal{N}$ of $\mathcal{G}_{81}$ we will obtain a partition of
the $81$-set $\omega_{4}$ into an ennead of quadric $9$-caps.

\bigskip

{\small\noindent Ron \thinspace Shaw, Centre for Mathematics,}

{\small \noindent University of Hull, Hull HU6 7RX, UK}

{\small r.shaw@hull.ac.uk}

{\small \medskip}

{\small \noindent Neil Gordon, Department of Computer Science, }

{\small \noindent University of Hull, Hull HU6 7RX, UK}

{\small n.a.gordon@hull.ac.uk}

{\small \medskip}

{\small \noindent Hans Havlicek, Institut f\"{u}r Diskrete Mathematik und
Geometrie,}

{\small \noindent Technische Universit\"{a}t, Wiedner Hauptstra{\ss}e 8--10/104, A-1040
Wien, Austria}

{\small havlicek@geometrie.tuwien.ac.at}

\end{document}